\def\mod{\quad\textup{mod }}
\def\JX{\Jac_X}
\def\cX{{\mathcal{X}}}
\theoremstyle{theorem}
\newtheorem*{conjecture*}{Conjecture}
\newtheorem{convention}{Convention}[section]
\newtheorem*{constructions}{Construction}
\begin{document}
\title[On Galois covers of curves and arithmetic of Jacobians]{On Galois covers of curves and arithmetic of Jacobians}

\author{Alexandros Konstantinou and Adam Morgan}

\address{University College London, London WC1H 0AY, UK}
\email{alexandros.konstantinou.16@ucl.ac.uk}

\address{University of Glasgow, Glasgow, G12 8QQ.}
\email{adam.morgan@glasgow.ac.uk}
\keywords{\'{E}tale algebras, function fields, curves, Jacobians, Selmer groups}
\subjclass[2010]{11G30 (11G10, 11G20, 14H25, 14H30, 14H40, 14K02)}

\begin{abstract} 
We study the arithmetic of curves and Jacobians endowed with the action of a finite group $G$. This includes a study of the basic properties, as $G$-modules, of  their $\ell$-adic representations, Selmer groups, rational points and Shafarevich--Tate groups. In particular, we show that $p^\infty$-Selmer groups are self-dual $G$-modules, and give various `$G$-descent' results for Selmer groups and rational points. Along the way we revisit, and slightly refine, a construction going back to Kani and Rosen for associating isogenies  to homomorphisms between permutation representations.

With a view to future applications, it is convenient to work throughout with curves that are not assumed to be geometrically connected (or even connected); such curves arise naturally when taking Galois closures of covers of curves. For lack of a suitable reference, we carefully detail how to deduce the relevant properties of such curves and their Jacobians from the more standard geometrically connected case.  
\end{abstract}

\maketitle

\setcounter{tocdepth}{1}
\tableofcontents

\section{Introduction}

The aim of this paper is to establish some basic, yet fundamental, arithmetic properties of curves and their Jacobians in the presence of the action of a finite group of automorphisms. A key motivator for this work is to support applications to the parity conjecture for Jacobians, which appear in work of the authors with V. Dokchitser and Green \cite{DGKM}. A central theme of that work is that one can study the arithmetic of Jacobians by exploiting \textit{Brauer relations} in the context of function fields of curves; alongside a broader study of group actions on curves over arithmetic fields, this paper sets up the machinery required to do this. With these applications in mind, it will be important to work throughout with a slightly more general notion of `curve' than is perhaps standard. Specifically, our convention is as follows:

\medskip
\begin{convention} \label{convention}
Throughout this paper, curves are assumed to be smooth and proper, but are not assumed to be connected, and neither are their connected components assumed to be geometrically connected. 
\end{convention}
 \medskip
 
\noindent  Such curves emerge naturally in the context of Galois covers; see Example \ref{ex:S3part1} below.  For completeness, and for lack of a suitable reference, we dedicate part of this paper to carefully detailing the basic properties enjoyed by such curves and their Jacobians. For convenience when stating the other results in the paper, we turn to this first.

\subsection{General curves and their Jacobians}

Let $k$ be a field of characteristic $0$. Under Convention \ref{convention}, a curve $X$ over $k$ admits a decomposition as a disjoint union of connected, yet not necessarily geometrically connected, curves. Much of the more standard theory of geometrically connected curves and their Jacobians carries over seamlessly to this framework. In particular, the equivalence between connected curves and function fields generalises to one between curves and certain \'{e}tale algebras. Moreover, the associated Jacobian variety\footnote{Defined as the identity component of the relative Picard functor of $X/k$.} $\textup{Jac}_{X}$ is a principally polarised abelian variety, admitting a decomposition into a product of Weil restrictions of Jacobians of geometrically connected curves. More precisely, we have the following proposition, giving the basic properties we will need. 

\begin{proposition}[Proposition \ref{curves_vs_etale_algebras} \& Lemma \ref{lem:curve_Weil restriction}] \label{Prop:Curves-Algebras}

Let $k$ be a field of characteristic $0$, and let $X$ be a curve over  $k$ satisfying Convention \ref{convention}. Then we have:

\begin{enumerate}
\item the Jacobian $\textup{Jac}_{X}$ of $X$ is a principally polarised abelian variety over $k$, 
\item there exist finite field extensions $k_{i}/k$, and geometrically connected curves $\{X_i/k_i\}_{i=1}^{n}$, such that
\[\textup{Jac}_{X} \cong \prod_{i=1}^{n} \textup{Res}_{k_i/k}\textup{Jac}_{X_i},\]
where $\textup{Res}_{k_i/k}$ denotes Weil restriction from $k_i$ to $k$.
\item writing $F=k(Y)$ for the function field of a geometrically connected curve $Y/k$,  there is an equivalence of categories
\[\left\{\begin{array}{c}\textup{covers }X\to Y\\\textup{ and } \\ \textup{morphisms between covers}  \end{array}\right\}  \stackrel{ }{\longleftrightarrow}  \left\{\begin{array}{c}\textup{finite \'{e}tale }$F$\textup{-algebras} \\\textup{ and }\\ $F$\textup{-algebra inclusions} \end{array}\right\},\]
which is compatible with base change to arbitrary field extensions. Here by a \textit{cover} we mean a curve $X/k$ equipped with a finite surjective morphism $X \to Y$; a morphism of covers of $Y$ is a finite surjective $Y$-morphism.
\end{enumerate}
\end{proposition}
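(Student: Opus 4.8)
The plan is to reduce everything to the classical theory of geometrically connected curves by first splitting $X$ into connected components and extracting their fields of constants. Write $X=\bigsqcup_{i=1}^{n}X_i$ with each $X_i$ a smooth proper connected curve over $k$; then $k_i:=H^0(X_i,\mathcal{O}_{X_i})$ is a finite, reduced, connected $k$-algebra, hence a finite field extension of $k$, separable since $\mathrm{char}\,k=0$. Being proper and geometrically reduced over $k_i$ with $H^0(X_i,\mathcal{O}_{X_i})=k_i$, the curve $X_i$ is geometrically connected (in fact geometrically integral, being smooth) over $k_i$; write $X_i'$ for $X_i$ regarded as a $k_i$-curve in this way.

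For parts (1) and (2) I would compute the Picard functor component by component and identify the contribution of each $X_i'$ with a Weil restriction. For a $k$-algebra $R$ one has functorially $\textup{Pic}(X_i\times_k R)=\textup{Pic}\bigl(X_i'\times_{k_i}(k_i\otimes_k R)\bigr)$, so after fppf sheafification $\textup{Pic}_{X_i/k}\cong\textup{Res}_{k_i/k}\textup{Pic}_{X_i'/k_i}$. Passing to identity components commutes with the product over $i$ and with Weil restriction along the finite separable extension $k_i/k$, giving $\textup{Jac}_{X}\cong\prod_{i=1}^{n}\textup{Res}_{k_i/k}\textup{Jac}_{X_i'}$, which is (2). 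Statement (1) then follows from the classical fact that $\textup{Jac}_{X_i'}$ is a principally polarised abelian variety over $k_i$, together with the facts that Weil restriction along a finite separable extension carries abelian varieties to abelian varieties and, via the canonical isomorphism $\textup{Res}_{k_i/k}(A^\vee)\cong(\textup{Res}_{k_i/k}A)^\vee$, carries principal polarisations to principal polarisations, while a finite product of principally polarised abelian varieties is again one.

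For part (3) the plan is to write down the functor $\{\text{covers of }Y\}\to\{\text{finite \'{e}tale }F\text{-algebras}\}$ by taking generic fibres, with $F=k(Y)$, and to construct a quasi-inverse by normalisation. Given a cover $f\colon X\to Y$, each connected component of $X$ necessarily dominates the irreducible curve $Y$ (a finite morphism from a $1$-dimensional scheme cannot have a point as image), so $\Gamma(X\times_Y\textup{Spec}\,F,\mathcal{O})=\prod_j k(X_j)$ is a faithful, reduced, finite $F$-algebra, hence finite \'{e}tale since $\mathrm{char}\,k=0$; a finite surjective $Y$-morphism induces an injective $F$-algebra map on generic fibres. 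Conversely, a finite \'{e}tale $F$-algebra $A=\prod_j L_j$ is sent to $\bigsqcup_j\overline{Y}^{L_j}$, the disjoint union of the normalisations of $Y$ in the finite separable extensions $L_j/F$; each $\overline{Y}^{L_j}$ is a normal proper curve over $k$, hence smooth, and its map to $Y$ is finite and surjective ($Y$ being excellent), so this is a cover, and an $F$-algebra inclusion $A\hookrightarrow A'$ yields a finite $Y$-morphism which is surjective precisely because injectivity of $A\to A'$ forces every factor $L_j$ to embed into some $L'_m$. To see these are quasi-inverse I would use that a smooth proper curve finite over $Y$ is normal and hence equals the normalisation of $Y$ in its own function field, and that $\Gamma(\overline{Y}^{L_j}\times_Y\textup{Spec}\,F,\mathcal{O})=L_j$; full faithfulness on morphisms follows since $f$ is affine, so $Y$-morphisms $X'\to X$ are $\mathcal{O}_Y$-algebra maps $f_*\mathcal{O}_X\to g_*\mathcal{O}_{X'}$, and because $f_*\mathcal{O}_X$ is the integral closure of $\mathcal{O}_Y$ in $A$ while $g_*\mathcal{O}_{X'}$ is torsion-free over $\mathcal{O}_Y$ with generic fibre $A'$, these are the same as $F$-algebra maps $A\to A'$, with surjectivity of the morphism equivalent to injectivity of the map. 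Finally, for a field extension $k'/k$ the base change $Y_{k'}$ stays geometrically integral, $F\otimes_k k'$ is a domain (as $F/k$ is a regular extension) with fraction field $k'(Y_{k'})$, and forming the generic fibre commutes with base change, giving the asserted compatibility.

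I expect the main obstacle to be bookkeeping rather than any single hard step: one must carefully translate between the geometric picture (connected components, their fields of constants, normalisations in function-field extensions) and the algebraic picture (idempotents and factor fields of \'{e}tale algebras), and repeatedly verify that classical statements about geometrically connected curves are invoked over the correct field of constants $k_i$ and that every construction commutes with base change. The one genuinely non-formal input is the compatibility of Weil restriction with duality, $\textup{Res}_{k_i/k}(A^\vee)\cong(\textup{Res}_{k_i/k}A)^\vee$, which is what lets the principal polarisation descend through the decomposition in (2) to yield (1).
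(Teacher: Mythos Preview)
Your proposal is correct. For parts (2) and (3) it is essentially the same argument as the paper's: the paper also decomposes $X$ into connected components, extracts the field of constants, and identifies $\textup{Jac}_{X_x}$ with $\textup{Res}_{k_x/k}\textup{Jac}_{X_x'}$ (though it phrases the identification via derived pushforwards of $\mathbb{G}_m$ rather than the functor-of-points), and for (3) it likewise reduces to the classical equivalence between connected curves and function fields, constructing the inverse exactly as you do by sending each factor field to the corresponding curve.

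The genuine difference is in part (1). You deduce the principal polarisation from the decomposition in (2), invoking the compatibility $\textup{Res}_{k_i/k}(A^\vee)\cong(\textup{Res}_{k_i/k}A)^\vee$ to transport the canonical polarisation on each $\textup{Jac}_{X_i'}$ through the Weil restriction. The paper instead proves (1) independently of (2): it base-changes to $\bar{k}$, takes the product of the canonical polarisations on the geometric components, and then shows directly that this product polarisation descends to $k$ by expressing its inverse via pullback along an Abel--Jacobi-type map $\phi^{(P)}\colon\overline{X}\to(\textup{Jac}_X)_{\bar{k}}$ and checking independence of the chosen tuple of base points $P$. Your route is cleaner if one is willing to quote the duality compatibility for Weil restriction (which you correctly flag as the one non-formal input); the paper's route is more self-contained and, usefully for its later purposes, produces an explicit description of $\lambda_X$ on geometric points. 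The paper in fact remarks, just before stating the Weil restriction lemma, that your approach ``gives an alternative approach'' to establishing (1).
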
 

We remark that since $k$ is assumed to have characteristic $0$, a finite \'{e}tale $F$-algebra is simply a finite product of finite field extensions of $k$. 

Part (3) of the proposition allows the construction of the $S_n$-closure of a degree $n$ cover $X\to Y$  (see Definition \ref{def:snclosurecurve}). This is a degree $n!$ cover $\tilde{X} \to Y$ that generalises the notion of the Galois closure of a branched cover of curves. This allows certain arguments to be carried out `uniformly', without worrying about the possibility that the Galois closure have degree smaller than $n!$. 

\subsection{Arithmetic of $G$-covers}
Having established  Proposition 1.1, the rest of the paper studies the arithmetic of finite group actions on the curves of Convention \ref{convention}. In particular, given a curve $X/K$ and a finite subgroup $G$ of $K$-automorphisms, we study various auxiliary $G$-representations. These include the $\ell$-adic Tate modules $V_{\ell}(\textup{Jac}{X})=T_\ell(\JX)\otimes\Q_\ell$ and, when $K$ is a number field, the rational points $\textup{Jac}_{X}(K)\otimes \mathbb{Q}$ and dual $p^\infty$-Selmer groups
 \[\mathcal{X}_{p}(\JX):= \textup{Hom}_{\mathbb{Z}_{p}}\big(\lim_\rightarrow\textup{Sel}_{p^n}(\JX), \mathbb{Q}_{p}/\mathbb{Z}_{p}\big) \otimes_{\mathbb{Z}_{p}} \mathbb{Q}_{p}.\]  
  Our main result for Selmer groups is the following.

\begin{theorem}[Theorem \ref{Theorem:Self-duality_selmer}] \label{self_duality_selmer} 
The $\mathbb{Q}_{p}[G]$-representation $\mathcal{X}_{p}(\textup{Jac}_{X})$ is self-dual.
\end{theorem}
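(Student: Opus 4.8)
The plan is to deduce self-duality from the existence of a $G$-equivariant perfect pairing on the relevant Selmer module, ultimately coming from the principal polarisation of $\mathrm{Jac}_X$ together with the Cassels--Tate pairing. First I would record, using Proposition~\ref{Prop:Curves-Algebras}(1), that $A := \mathrm{Jac}_X$ is a principally polarised abelian variety over $K$, so the polarisation $\lambda\colon A \xrightarrow{\sim} A^\vee$ identifies $A$ with its dual. This $\lambda$ need not itself be $G$-equivariant, but $G$ acts on $A$ and hence on $A^\vee$ (contravariantly), and for each $g\in G$ the composite $\lambda^{-1}\circ g^\vee\circ\lambda\circ g$ is an automorphism of $A$; the point is that after tensoring with $\mathbb{Q}_p$ the resulting pairing on Selmer can be symmetrised over $G$, or alternatively one works with the canonical pairing that is automatically equivariant.

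The key step is to invoke the Cassels--Tate pairing. For an abelian variety $A/K$ over a number field, there is a pairing $\Sha(A)\times\Sha(A^\vee)\to\mathbb{Q}/\mathbb{Z}$ which is non-degenerate modulo divisible elements, and a corresponding perfect pairing at the level of $p^\infty$-Selmer groups relating $\mathcal{X}_p(A)$ and $\mathcal{X}_p(A^\vee)$; concretely, $\mathcal{X}_p(A^\vee)$ is naturally the $\mathbb{Q}_p$-linear dual of $\mathcal{X}_p(A)$. This pairing is functorial in $A$: an isogeny $\phi\colon A\to B$ induces $\phi_*$ on Selmer groups compatibly with the dual map $\phi^\vee\colon B^\vee\to A^\vee$ and the pairings. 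Applying this functoriality to the action of each $g\in G$ (viewed as an automorphism $g\colon A\to A$, with dual $g^\vee\colon A^\vee\to A^\vee$), and then transporting along the principal polarisation $\lambda$ to replace $A^\vee$ by $A$ and $g^\vee$ by $\lambda^{-1}g^\vee\lambda$, one obtains a non-degenerate bilinear pairing
\[
\mathcal{X}_p(A)\times\mathcal{X}_p(A)\longrightarrow\mathbb{Q}_p
\]
satisfying $\langle g x, y\rangle = \langle x, (\lambda^{-1}g^\vee\lambda) y\rangle$. Since on a principally polarised abelian variety the Rosati involution $\dagger\colon \mathrm{End}(A)\otimes\mathbb{Q}\to\mathrm{End}(A)\otimes\mathbb{Q}$, $\alpha\mapsto\lambda^{-1}\alpha^\vee\lambda$, restricts on the finite group $G\subset\mathrm{Aut}(A)$ to $g\mapsto g^{-1}$ (the Rosati involution sends a finite-order automorphism to its inverse, being a positive involution — indeed $g^\dagger g$ has all eigenvalues of absolute value $1$ and is fixed by Rosati, forcing $g^\dagger=g^{-1}$), the pairing satisfies $\langle gx,y\rangle=\langle x,g^{-1}y\rangle$. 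This is precisely the statement that the perfect pairing is $G$-invariant, hence that $\mathcal{X}_p(A)$ is isomorphic to its contragredient as a $\mathbb{Q}_p[G]$-module, i.e.\ self-dual.

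I would organise the write-up as: (i) set up $A=\mathrm{Jac}_X$ and its polarisation; (ii) recall the Cassels--Tate/Selmer pairing and its functoriality under isogenies, giving the identification $\mathcal{X}_p(A^\vee)\cong\mathcal{X}_p(A)^*$; (iii) run the equivariance computation, the crux being the identity $g^\dagger = g^{-1}$ for the Rosati involution on torsion automorphisms; (iv) conclude. The main obstacle I anticipate is purely bookkeeping around the non-equivariance of $\lambda$: one must be careful that the $G$-action induced on $A^\vee$ is the contragredient one, and track signs/inverses through the dual-abelian-variety and Weil-restriction formalism (here Proposition~\ref{Prop:Curves-Algebras}(2) may be used to reduce to geometrically connected $X_i$ if a cleaner reference for Cassels--Tate is wanted in that setting, though the pairing exists in general). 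A secondary technical point is making sure the $\mathbb{Q}_p$-valued (rather than $\mathbb{Q}_p/\mathbb{Z}_p$-valued) pairing on $\mathcal{X}_p$ is genuinely perfect, which is where tensoring with $\mathbb{Q}_p$ — killing the divisible/torsion ambiguities in the classical Cassels--Tate pairing — is essential.
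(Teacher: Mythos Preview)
Your approach has a genuine gap at its central step: the assertion that Cassels--Tate (or any pairing derived from it) yields a \emph{perfect} $\mathbb{Q}_p$-bilinear pairing on $\mathcal{X}_p(A)\times\mathcal{X}_p(A^\vee)$. Recall that $\mathcal{X}_p(A)$ is the $\mathbb{Q}_p$-vector space dual to the \emph{divisible} part of $\textup{Sel}_{p^\infty}(A)$; its dimension is $\mathrm{rk}\,A$ plus the number of copies of $\mathbb{Q}_p/\mathbb{Z}_p$ in $\Sha(A)[p^\infty]$. The Cassels--Tate pairing on $\Sha$ is non-degenerate only \emph{modulo} the maximal divisible subgroup, and Flach's extension to Selmer groups likewise has the maximal divisible subgroup as its kernel. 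So these pairings vanish identically on precisely the part of the Selmer group that $\mathcal{X}_p$ records. Tensoring with $\mathbb{Q}_p$ does not rescue this---it kills torsion in the dual, but the problem is degeneracy on divisible elements in the domain. (For the Mordell--Weil contribution you might reach for the N\'eron--Tate height, but that is $\mathbb{R}$-valued, not $\mathbb{Q}_p$-valued; the $p$-adic height pairing is not known to be non-degenerate in general.) The paper explicitly warns of this: without finiteness of $\Sha$, even self-duality ``requires careful argument''.

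The paper's proof (Theorem~\ref{Theorem:Self-duality_selmer}), following Dokchitser--Dokchitser, bypasses the need for any pairing on $\mathcal{X}_p$. One fixes a $\mathbb{Q}_p$-irreducible $\tau$, constructs an element $\sigma\in\mathbb{Z}[G]$ $p$-adically close to a specific central element $z_\tau\in\mathbb{Z}_p[G]$ so that the corresponding self-isogeny $\phi_\sigma$ of $A$ acts on the lattice $X_p(A)$ by multiplication by $p|G|\langle\tau,\tau\rangle$ on the $\tau$-part and by $|G|\langle\tau,\tau\rangle$ elsewhere (up to isomorphism). Because $G\subset\textup{Aut}(A,\lambda)$, the dual isogeny $\phi_\sigma^\vee$ corresponds to the element obtained from $\sigma$ by $g\mapsto g^{-1}$, and one arranges that this lies near $z_{\tau^*}$. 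The key arithmetic input is then the identity $Q(\phi_\sigma)=Q(\phi_\sigma^\vee)$ from \cite{MR2680426}, where $Q$ is an isogeny invariant built from cokernels on Mordell--Weil and kernels on $\Sha_{\mathrm{div}}$; comparing $p$-adic valuations forces $n_\tau=n_{\tau^*}$. Your Rosati computation $g^\dagger=g^{-1}$ is correct and is used here, but the substance of the argument lies in the $Q$-invariant, not in a bilinear form.
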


This theorem is a key ingredient for investigating Selmer rank parities using the theory of \textit{regulator constants} (see \cite{tamroot}).
We remark that an immediate consequence of finiteness of the Shafarevich--Tate group of $\textup{Jac}_X$ would be that $\mathcal{X}_{p}(\textup{Jac}_{X})$ is in fact a \textit{rational} $G$-representation (and isomorphic to $\textup{Jac}_{X}(K)\otimes_{\mathbb{Z}} \mathbb{Q}_p$). However, without \textit{a priori} knowledge of the \hbox{Shafarevich--Tate} group,  even the self-duality of  $\mathcal{X}_{p}(\textup{Jac}_{X})$ requires careful argument.

We prove Theorem \ref{self_duality_selmer} more generally for Selmer groups of principally polarised abelian varieties equipped with the action of a finite group of automorphisms. This includes as a special case the self-duality of the Selmer group as a Galois module proven by T. and V. Dokchitser in \cite{dokchitser_dokchitser_2009}.  The broader scope provided by Theorem \ref{self_duality_selmer} finds applications in the context of Galois covers of curves in \cite{DGKM, AK}.

We now compare the arithmetic of $\textup{Jac}_X$ to the arithmetic of the Jacobian of the quotient curve $X/G$.   In this context, we confirm ``\textit{Galois descent}'' in various settings. 

\begin{theorem}[Theorem \ref{Galois_descent_rational_points} \& Proposition \ref{prop:heights_rescaling}]\label{thm:introgaloisdescent}
Let $X/K$ be a curve over a number field, and let $G$ be a finite group of $K$-automorphisms of $X$. Then we have canonical isomorphisms 
\begin{enumerate} 
  \item $(\Jac_X(K)\otimes\Q)^G \simeq \Jac_{X/G}(K)\otimes\Q$, 
  \item $\cX_p(\Jac_X)^G \simeq \cX_p(\Jac_{X/G})$,
  \item $V_{\ell}(\Jac_X)^G \simeq V_{\ell}(\Jac_{X/G})$.
\end{enumerate}
Moreover, under the isomorphism of (1), the N\'eron--Tate heights on $\Jac_X$ and $\Jac_{X/G}$ satisfy, for all $x,y$ in $\Jac_{X/G}(K)$, \[ \langle x,y \rangle_{\Jac_X}= |G|\cdot \langle x,y\rangle_{\Jac_{X/G}} .\]
\end{theorem}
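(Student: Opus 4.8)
The key point is that everything is controlled by two natural maps between the Jacobians: pullback $\pi^*\colon \Jac_{X/G}\to \Jac_X$ and pushforward (the Albanese/norm map) $\pi_*\colon \Jac_X\to \Jac_{X/G}$ associated to the quotient map $\pi\colon X\to X/G$. (Here one should use Proposition \ref{Prop:Curves-Algebras} to ensure that $X/G$ is again a curve in the sense of Convention \ref{convention}, so that $\Jac_{X/G}$ makes sense and $\pi$ is a finite surjective morphism of degree $|G|$.) The fundamental identity is $\pi_*\circ\pi^* = [\deg\pi] = [|G|]$ on $\Jac_{X/G}$, together with the fact that the image of $\pi^*$ lands in $\Jac_X^G$ and that $\pi^*$ has finite kernel (indeed it is an isogeny onto its image, and in fact injective in characteristic $0$ up to controlled torsion). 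Dually, $\pi^*\circ\pi_*$ agrees on $\Jac_X$ with $\sum_{g\in G} g$, i.e. with $|G|$ times the idempotent-like averaging operator $e_G = \frac{1}{|G|}\sum_{g\in G}g$ that projects onto the $G$-invariants after tensoring with $\Q$. Thus, after tensoring any of the functors $M(-) \in \{ (-)(K)\otimes\Q,\ \cX_p(-),\ V_\ell(-)\}$ with the appropriate coefficient field, the composites $M(\pi_*)M(\pi^*)$ and $M(\pi^*)M(\pi_*)$ become multiplication by $|G|$ and by $|G|e_G$ respectively; since these functors are ($\Q$-, $\Q_p$-, $\Q_\ell$-)linear and exact in the relevant sense, this formally yields that $M(\pi^*)$ and $\frac{1}{|G|}M(\pi_*)$ are mutually inverse isomorphisms between $M(\Jac_{X/G})$ and $M(\Jac_X)^G$. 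This simultaneously proves (1), (2) and (3); for (2) one additionally uses Theorem \ref{self_duality_selmer} or the standard compatibility of Selmer groups with isogenies to pass between $\cX_p$ and its $G$-invariants, and exactness of $(-)^G$ on $\Q_p$-vector spaces.

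\textbf{Key steps, in order.} First, establish that $X/G$ is a curve satisfying Convention \ref{convention} and that $\pi$ is finite surjective of degree $|G|$; this is where part (3) of Proposition \ref{Prop:Curves-Algebras} (the equivalence of categories with \'etale algebras, which lets one take invariants of function algebras under $G$) does the work, and it reduces the geometric input to the geometrically connected case. Second, construct $\pi^*$ and $\pi_*$ on Jacobians and record the two identities $\pi_*\pi^* = [|G|]$ and $\pi^*\pi_* = \sum_{g\in G} g$; the second is the less familiar one and follows from the projection formula / the description of $\pi^*\pi_*$ on divisor classes as $D \mapsto \sum_g g^*D$. Third, apply the functor $M$, tensor with the relevant field of characteristic $0$, and use that on a $\Q$-vector space (resp.\ $\Q_p$-, $\Q_\ell$-vector space) with a $G$-action the operator $\frac{1}{|G|}\sum_g g$ is the projector onto the invariants; conclude that $M(\pi^*)$ identifies $M(\Jac_{X/G})$ with $M(\Jac_X)^G$, with inverse $\frac1{|G|}M(\pi_*)$. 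Fourth, for the height statement: pullback of line bundles along $\pi$ multiplies degrees by $|G|$, hence $\pi^*$ multiplies a canonical (theta-)polarisation-height pairing by $|G|$; concretely, $h_{\Jac_X}(\pi^* x) = |G|\, h_{\Jac_{X/G}}(x)$ by functoriality of Weil heights with respect to the pulled-back ample symmetric line bundle, and polarising the quadratic form gives $\langle \pi^* x, \pi^* y\rangle_{\Jac_X} = |G|\langle x,y\rangle_{\Jac_{X/G}}$, which is exactly the claimed formula once we transport along the isomorphism of (1).

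\textbf{Main obstacle.} The genuinely delicate point is (2), the Selmer statement, carried out \emph{without} assuming finiteness of Sha: $\cX_p$ is only a $\Q_p$-vector space, the functor is exact enough to push the above abstract-nonsense argument through (isogenies induce isomorphisms on $\cX_p$, as $\cX_p$ kills the finite kernels/cokernels), but one must be careful that $\pi^*$ is really an isogeny onto $\Jac_X^G$ (equivalently, that $\Jac_X^G$ and $\mathrm{im}(\pi^*)$ have the same dimension), which in turn rests on the analogous statement for $V_\ell$, i.e. part (3). So the logical skeleton is: prove (3) first by the $\sum_g g$-projector argument on $V_\ell$; deduce that $\pi^*\colon \Jac_{X/G}\to \Jac_X^{G,\circ}$ is an isogeny; then bootstrap to (1) and (2) using compatibility of Mordell--Weil groups and Selmer groups with isogenies over number fields. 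The height identity is then the only ``geometric'' (as opposed to homological-algebra) computation, and it is routine given the degree-$|G|$ behaviour of $\pi^*$ on polarisations.
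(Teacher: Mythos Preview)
Your proposal is correct and follows essentially the same route as the paper: the identities $\pi_*\pi^*=|G|$ and $\pi^*\pi_*=\sum_{g\in G}g_*$ (Proposition~\ref{pull_back_pushforward_prop_quasi_nice}), applied to any additive functor into vector spaces over a field of characteristic prime to $|G|$, give the three isomorphisms uniformly (Lemma~\ref{Additive functor lemma}).

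Two minor remarks. First, your ``main obstacle'' paragraph overcomplicates (2): the projector argument applies directly to $\mathcal{X}_p$ as an additive functor, with no need to first establish that $\pi^*$ is an isogeny onto $(\Jac_X^G)^0$ or to invoke Theorem~\ref{self_duality_selmer}; the paper simply runs the same formal argument for all three functors. Second, for the height identity the paper does not argue via pullback of theta bundles but rather via the adjointness $h_B(\phi(a),b)=h_A(a,\phi^\vee(b))$ (Lemma~\ref{lem:height_pairing_adjoints}) combined with $(\pi^*)^\vee=\pi_*$ and $\pi_*\pi^*=|G|$; your sketch conflates line bundles on $X$ with those on $\Jac_X$, though the underlying computation is equivalent.
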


Finally, we give a basic but useful criterion for demonstrating that certain representations of $G$ are absent from rational points and Selmer groups.

\begin{proposition}[Proposition \ref{Proposition: pseudo Brauer for Xp(A)}] \label{reps_absent}
In the setting of Theorem \ref{thm:introgaloisdescent}, suppose that an irreducible $G$-representation $\rho$ does not appear in the $\ell$-adic Tate module $V_\ell(\JX)$. Then it does not appear in rational points or in $p^\infty$-Selmer groups: 
$$
\langle \rho,V_\ell(\JX) \rangle =0 \implies
\langle\rho, \JX(K)\otimes\Q \rangle =\langle\rho, \cX_p(\JX) \rangle =0. 
$$
(Here $\left \langle ~,~\right \rangle$ denotes the usual inner product on characters of $G$-representations.)
\end{proposition}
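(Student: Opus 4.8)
The plan is to deduce all three vanishing statements from the single hypothesis $\langle \rho, V_\ell(\Jac_X)\rangle = 0$ by relating each of the relevant $G$-representations to $V_\ell(\Jac_X)$ via the Weil restriction/Jacobian decomposition and the Galois-descent isomorphisms of Theorem \ref{thm:introgaloisdescent}. The key observation is that for \emph{any} subgroup $H \le G$, the quotient curve $X/H$ satisfies Convention \ref{convention}, and by Theorem \ref{thm:introgaloisdescent}(3) we have $V_\ell(\Jac_{X/H}) \simeq V_\ell(\Jac_X)^H$, i.e. $V_\ell(\Jac_X)$ restricted to $H$ and then taking $H$-invariants. Equivalently, $\dim V_\ell(\Jac_{X/H}) = \langle \mathds{1}_H, \mathrm{Res}_H V_\ell(\Jac_X)\rangle$, which by Frobenius reciprocity equals $\langle \mathrm{Ind}_H^G \mathds{1}_H, V_\ell(\Jac_X)\rangle = \langle \mathbb{Q}[G/H], V_\ell(\Jac_X)\rangle$. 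Thus the hypothesis on $\rho$ controls the multiplicity of $\rho$ inside every permutation module $\mathbb{Q}[G/H]$-isotypic piece that can be cut out of $\Jac_X$.

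First I would record that $\rho$ does not appear in $V_\ell(\Jac_X)$ as a \emph{rational} representation precisely when, for every subgroup $H$ and every sub-$\mathbb{Q}[G]$-module arising from the Kani--Rosen type construction, $\rho$ is absent; in particular, since $V_\ell(\Jac_X)\otimes\mathbb{Q}_\ell$ is self-dual and defined over $\mathbb{Q}$ (being a Tate module of an abelian variety with $G$-action, so its character is rational), $\langle\rho, V_\ell(\Jac_X)\rangle$ makes sense and the hypothesis is a genuine arithmetic constraint. Next, for $\Jac_X(K)\otimes\mathbb{Q}$: the Mordell--Weil group injects $G$-equivariantly into (a twist of) the Tate module via the Kummer map, so $\Jac_X(K)\otimes\mathbb{Q}_\ell$ is a subquotient of $V_\ell(\Jac_X)$ \emph{as a $\mathbb{Q}_\ell[G]$-module} (the Galois action commutes with $G$). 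Hence every irreducible $G$-constituent of $\Jac_X(K)\otimes\mathbb{Q}$ is a constituent of $V_\ell(\Jac_X)$, giving $\langle\rho,\Jac_X(K)\otimes\mathbb{Q}\rangle = 0$. For the dual $p^\infty$-Selmer group $\cX_p(\Jac_X)$ I would argue similarly: the Selmer group $\mathrm{Sel}_{p^n}(\Jac_X)$ sits in an exact sequence with $\Jac_X(K)/p^n$ and $\Sha[p^n]$, and passing to the limit and tensoring with $\mathbb{Q}_p$ one obtains that $\cX_p(\Jac_X)$ is, as a $\mathbb{Q}_p[G]$-module, built from $\Jac_X(K)\otimes\mathbb{Q}_p$ together with a contribution from $\Sha$; the point is that $\cX_p(\Jac_X)$ is a subquotient of $H^1_f$-type groups that themselves inject $G$-equivariantly into $H^1(K, V_p(\Jac_X))$, and local conditions are $G$-stable, so again only constituents of $V_p(\Jac_X)$ can appear. (Alternatively, one can invoke Theorem \ref{self_duality_selmer} together with the descent isomorphism $\cX_p(\Jac_X)^H \simeq \cX_p(\Jac_{X/H})$ and the analogous statement for $V_\ell$, and compare multiplicities subgroup-by-subgroup using the fact that a self-dual rational representation is determined by the dimensions of its $H$-invariants as $H$ ranges over subgroups — but the direct subquotient argument is cleaner.)

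The main obstacle I anticipate is making precise the sense in which $\cX_p(\Jac_X)$ is a ``$G$-subquotient'' of $V_p(\Jac_X)$: unlike the Mordell--Weil group, the dual Selmer group is not literally a submodule of a Galois cohomology group of the Tate module in an obvious way, and one must be careful that the self-duality (Theorem \ref{self_duality_selmer}) and the functoriality of Selmer groups under the $G$-action interact correctly. Concretely, I would phrase the argument as: $\mathrm{Sel}_{p^\infty}(\Jac_X)$ embeds into $H^1(K,\Jac_X[p^\infty])$, and after $\otimes\,\mathbb{Q}_p$ this receives a $G$-equivariant surjection from a space whose every $G$-constituent already appears in $\Jac_X(K)\otimes\mathbb{Q}_p$ plus $(\Sha\text{-part})$, while the $\Sha$-part, by the Cassels--Tate pairing being $G$-equivariant (up to the canonical twist, which is trivial on $\mathbb{Q}$-characters), contributes a self-dual piece whose constituents are constrained the same way. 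Unwinding this correctly — rather than any hard new input — is where the care is needed; everything else is bookkeeping with characters and Frobenius reciprocity.
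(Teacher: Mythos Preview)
Your approach differs substantially from the paper's, and as written has genuine gaps.

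\textbf{The paper's argument.} The proof in the paper is short and purely algebraic: since $V_\ell(A)$ has rational character, one may assume $\rho$ is realisable over $\mathbb{Q}$. Let $e_\rho\in\mathbb{Q}[G]$ be the idempotent projecting onto the $\rho$-isotypic part, and choose $n\in\mathbb{Z}_{>0}$ with $ne_\rho\in\mathbb{Z}[G]$. Then $ne_\rho$ defines an endomorphism of $A$ which acts as zero on $V_\ell(A)$; by faithfulness of the Tate module on $\textup{End}(A)$, it is the zero endomorphism of $A$ itself. Hence it acts as zero on $A(K)\otimes\mathbb{Q}$ and on $\mathcal{X}_p(A)$, which forces the $\rho$-isotypic parts of both to vanish. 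No cohomological input, no self-duality, no descent is needed.

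\textbf{Gaps in your approach.} Your main line of argument has two concrete problems. First, the assertion that the Mordell--Weil group ``injects $G$-equivariantly into (a twist of) the Tate module via the Kummer map'' is not correct: the Kummer map lands in $H^1(K,V_p(A))$, not in $V_p(A)$, and $A(K)\otimes\mathbb{Q}_p$ need not be a subquotient of $V_p(A)$ at all (its dimension can exceed $2\dim A$). One can repair this step by observing that the isotypic decomposition $V_p(A)\cong\bigoplus_\sigma \sigma\otimes\textup{Hom}_G(\sigma,V_p(A))$ passes to $H^1$, so that $\rho$ absent from $V_p(A)$ implies $\rho$ absent from $H^1(K,V_p(A))$; but you do not say this, and it is the actual content of the step. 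Second, and more seriously, your Selmer argument via the Cassels--Tate pairing controls only $\Sha/\Sha_{\textup{div}}$, whereas the contribution to $\mathcal{X}_p(A)$ beyond $A(K)\otimes\mathbb{Q}_p$ comes precisely from the \emph{divisible} part of $\Sha[p^\infty]$, about which Cassels--Tate says nothing. Your parenthetical alternative---comparing dimensions of $H$-invariants---also fails as stated: that trick determines rational representations, but $\mathcal{X}_p(A)$ is only known to be self-dual over $\mathbb{Q}_p$, not rational (indeed the paper flags exactly this point), and self-dual $\mathbb{Q}_p[G]$-modules are not determined by the function $H\mapsto\dim V^H$.

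A salvage of your route is possible (embed the compact Selmer group $\varprojlim_n\textup{Sel}_{p^n}(A)$ into $H^1(K,T_pA)$, tensor with $\mathbb{Q}_p$, and then invoke self-duality of $\mathcal{X}_p$ to pass between $\mathcal{X}_p$ and its contragredient), but this is considerably heavier than the idempotent argument and, unlike the paper's proof, genuinely depends on Theorem~\ref{self_duality_selmer}.
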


\begin{remark}
As with Theorem \ref{self_duality_selmer}, Theorem \ref{thm:introgaloisdescent} and Proposition \ref{reps_absent} (suitably formulated) hold in the more general setting of abelian varieties equipped with a finite group action. Instances where such generalisations are feasible are highlighted in the main text. 
\end{remark}

\subsection{Homomorphisms from $G$-maps}
A key tool for studying the arithmetic of Jacobians in the presence of the action of a finite group $G$ is a formalism, originating in work of Kani and Rosen \cite{MR1000113}, for constructing isogenies between Jacobians from certain homomorphisms between $G$-modules. In this paper, we both extend this construction to the curves of Convention \ref{convention}, and give a refinement of the construction. Our result here is the following, a more detailed version of which appears as Theorem \ref{main_G_mod_corresp_thm}. In the statement, $H_{i}, H_{j}'$ denote subgroups of $G$.

\begin{theorem}[Theorem \ref{main_G_mod_corresp_thm}] \label{kani-rosen} 
Let $k$ be a field of characteristic $0$, and let $X/k$ be a curve and $G$ a finite group of automorphisms of $X$. Let $S=\coprod_{i} G/H_{i}$ and $S'=\coprod_{j} G/H_{j}'$ be two finite $G$-sets. Given a $\mathbb{Z}[G]$-module homomorphism \hbox{$\phi: \mathbb{Z}[S] \to \mathbb{Z}[S']$}, there is a natural $k$-homomorphism
\[f_{\phi}: \prod_{j} \textup{Jac}_{X/H_j'} \to \prod_{i} \textup{Jac}_{X/H_i}. \] The association $\phi \mapsto f_\phi$ is functorial in $\phi$. Moreover, the transposed matrix of $\phi$ (with respect to the standard bases), denoted $\phi^{\vee}$, satisfies $f_{\phi^{\vee}} = (f_{\phi})^{\vee}$. That is, the dual of $f_{\phi}$  (taken with respect to the canonical principal polarisations) agrees with $f_{\phi^{\vee}}$.
\end{theorem}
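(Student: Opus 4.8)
The plan is to construct the homomorphism $f_\phi$ from an auxiliary functor, check functoriality first, and treat the duality claim as a formal consequence of how the standard basis interacts with the intersection pairing. First I would set up the key dictionary: for a subgroup $H\le G$, the quotient curve $X/H$ comes with a finite surjective morphism $X/H\to X/G$, and hence by Proposition \ref{Prop:Curves-Algebras}(3) corresponds to the \'{e}tale algebra associated to the $G$-set $G/H$. More generally, for a finite $G$-set $S=\coprod_i G/H_i$ one obtains a curve $X_S=\coprod_i X/H_i$ over $k$, functorially in $S$: a $G$-map $S\to S'$ induces a morphism of covers $X_{S'}\to X_S$ (note the variance reversal), hence by the Kani--Rosen philosophy a pullback map $\Jac_{X_S}\to\Jac_{X_{S'}}$ and a pushforward (Albanese/norm) map $\Jac_{X_{S'}}\to\Jac_{X_S}$ on Jacobians. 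Since $\Jac_{X_S}=\prod_i\Jac_{X/H_i}$, it suffices to define $f_\phi$ for $\phi$ running over a generating set of $\Hom_{\Z[G]}(\Z[S],\Z[S'])$ and extend $\Z$-linearly.

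The second step is to identify a convenient set of generators for $\Hom_{\Z[G]}(\Z[S],\Z[S'])$. By additivity it is enough to handle $\Hom_{\Z[G]}(\Z[G/H],\Z[G/H'])$, and by the standard description (e.g. via the double-coset formula, or Frobenius reciprocity $\Hom_{\Z[G]}(\Z[G/H],\Z[G/H'])\cong\Z[H\backslash G/H']^{\ldots}$) this group is spanned by the maps attached to double cosets $HgH'$. Geometrically each such generator factors through a correspondence: the fibre product $(X/H)\times_{X/G}(X/H')$, or rather the component cut out by $g$, which maps finitely to both $X/H$ and $X/H'$; the generator is then $(\text{pushforward from the correspondence})\circ(\text{pullback to the correspondence})$. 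This is exactly the Kani--Rosen recipe, and one checks functoriality in $\phi$ by composing correspondences (the composite of two double-coset correspondences decomposes as a sum of double-coset correspondences matching the product in $\Hom_{\Z[G]}$). Because pullback and pushforward along a fixed morphism are related by transposition of the corresponding incidence matrix, and because composition of correspondences is compatible with matrix multiplication, this bookkeeping is routine once the dictionary is in place.

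For the duality statement, the point is that under the canonical principal polarisations the dual of a pullback map $\phi^*:\Jac_{X/H}\to\Jac_{X/H'}$ (induced by $X/H'\to X/H$) is the pushforward map $\phi_*$ in the opposite direction, and vice versa. Concretely, for a finite morphism $\pi:C'\to C$ of curves the induced maps $\pi^*:\Jac_C\to\Jac_{C'}$ and $\pi_*:\Jac_{C'}\to\Jac_C$ on Jacobians are dual to one another with respect to the autoduality of Jacobians --- this is the standard fact that the N\'eron--Severi/intersection pairing is symmetric and that $\pi^*$ and $\pi_*$ are adjoint for it. Combining this with the observation that the standard-basis description of a $G$-map and of its transpose simply swaps the roles of ``source fibre'' and ``target fibre'' in each double-coset correspondence, one gets $f_{\phi^\vee}=(f_\phi)^\vee$ for each generator, and hence for all $\phi$ by $\Z$-linearity of both sides (duality is additive).

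The main obstacle I anticipate is not the duality formalism but the careful verification that all of this descends correctly to the possibly-disconnected, non-geometrically-connected curves of Convention \ref{convention}: one must know that $X/H$ is again such a curve, that the relative Picard functor still gives a principally polarised abelian variety with the expected pullback/pushforward maps (this is where Proposition \ref{Prop:Curves-Algebras}(1) and the Weil-restriction decomposition (2) are used), and that adjointness of $\pi^*,\pi_*$ with respect to the canonical polarisations survives Weil restriction. A secondary subtlety is pinning down the normalisations --- whether $\pi_*\circ\pi^*$ equals multiplication by $\deg\pi$ or by the incidence number, and which convention makes $\phi\mapsto f_\phi$ (rather than some twist) strictly functorial --- so I would fix these conventions explicitly at the outset and check them on the single generator $\Z[G/H]\to\Z[G/H]$, $[eH]\mapsto\sum_{g}[gH]$, before asserting the general case.
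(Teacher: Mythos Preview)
Your approach via geometric correspondences (components of the fibre product $(X/H)\times_{X/G}(X/H')$) is a legitimate route, but it differs substantially from the paper's. The paper works entirely through $\Jac_X$: given $\phi\in\textup{Hom}_G(\mathbb{Z}[G/H],\mathbb{Z}[G/H'])$, lift $\phi(H)$ to $\sum_g m_g g\in\mathbb{Z}[G]$, form the endomorphism $\widetilde\phi=\sum_g m_g g_*$ of $\Jac_X$, and set $\alpha(\phi)=(\pi_H)_*\circ\widetilde\phi\circ\pi_{H'}^*$. This $\alpha(\phi)$ is Chen's map, and as Remark~\ref{rem:desmit_chen_remark} notes, it does \emph{not} satisfy strict functoriality or duality. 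The key step (Lemma~\ref{functorial_isogeny_lemma}) is a divisor-level computation showing $\Jac_{X/H'}[|H|]\subseteq\ker\alpha(\phi)$, so that $f_\phi:=\alpha(\phi)/|H|$ exists as an honest homomorphism of abelian varieties, not merely in the isogeny category. Functoriality then drops out immediately from \emph{uniqueness} of this division, and duality is checked on double-coset generators via the symmetric identity $\sum_{u}ugN_{H'}=\sum_{t\in HgH'}t=\sum_w N_{H}gw$. So what you flag as a ``secondary subtlety'' (normalisation) is in fact the main technical content of the paper's argument.

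Your correspondence recipe should indeed produce $f_\phi$ directly (with the correct normalisation, no division needed), but then the burden shifts to proving functoriality, i.e.\ that composition of correspondences matches composition in $\textup{Hom}_{\mathbb{Z}[G]}$. Here there is a gap you do not acknowledge: the fibre product $(X/H)\times_{X/G}(X/H')$ is typically \emph{not smooth}---it acquires singularities above points of $X/G$ where both quotient maps ramify---so ``the component cut out by $g$'' is not a curve in the sense of Convention~\ref{convention}, and pushforward/pullback on its Jacobian are not immediately defined. You would need to pass to normalisations (identifying the normalised component with $X/(H\cap gH'g^{-1})$) or work with divisorial correspondences, and then verify the push--pull base-change formula $q_1^*p_{2*}=p_{2*}'q_1'^*$ in that setting. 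This is doable, but it is genuine work comparable in difficulty to Lemma~\ref{functorial_isogeny_lemma}, not the ``routine bookkeeping'' you suggest. The paper's route trades this geometric base-change verification for a single clean torsion computation, after which uniqueness does the rest.
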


In addition, Theorem \ref{main_G_mod_corresp_thm} specifies an explicit criterion for when the resulting homomorpism $f_\phi$ is an isogeny. This is the case, for example, when $\phi$ induces an isomorphism of $\mathbb{Q}[G]$-representations $\mathbb{Q}[S] \cong \mathbb{Q}[S']$. Such isomorphisms can exist even when the underlying $G$-sets $S$ and $S'$ are not isomorphic; see \cite[Definition 2.1]{tamroot}. Such isomorphisms, known as \textit{Brauer relations}, provide a rich source of isogenies between Jacobians. 

\begin{remark}
The work of Kani--Rosen mentioned above verifies the \textit{existence} of an isogeny $f_\phi$ in the presence of suitable maps between permutation representations of $G$. However, their work does not give an explicit construction of the isogeny. In later work, Chen  \cite{MR1779608} gives an explicit construction, though the resulting map between abelian varieties does not (quite) satisfy the functoriality and duality results that our construction does. By contrast, work of de Smit and Edixhoven \cite{MR1000113} constructs the same map $f_\phi$ as us, but does so only  in the category of abelian varieties with isogenies inverted. See Remark \ref{rem:desmit_chen_remark} for a more precise comparison between those works and Theorem \ref{kani-rosen}.
\end{remark}

\subsection{Equivariant Riemann--Hurwitz}
Let $Y/k$ be a geometrically connected curve of genus $g(Y)$, and let $\pi:X \to Y$  be a $G$-cover of curves (that is, there is an isomorphism $Y\cong X/G$ making the obvious diagram commute). In this setting, we give a description of the $G$-module structure of the rational $\ell$-adic Tate module $V_{\ell}(\textup{Jac}_{X})$ in terms of permutation modules.  The case when $X$ is geometrically connected is the equivariant Riemann--Hurwitz formula, as can be found, for example, in  \cite[Proposition 1.1]{Riemann-Hurwitz}.  In the setting of the curves of Convention \ref{convention}, we verify the following extension of this result. 

\begin{proposition}[Proposition \ref{Theorem:equiv_RH}] \label{Thm:Equivariant Riemann--Hurwitz}
Let $\pi:X \to Y$  be a $G$-cover of curves. Let $\Gamma$ be a connected component of $X_{\bar{k}}$, and write $\pi|_{\Gamma}: \Gamma \to Y_{\bar{k}}$ for the restriction of (the base-change to $\bar{k}$ of) $\pi$ to $\Gamma$. Let $H\leq G$ denote the stabiliser of $\Gamma$, denote by $\{q_1,\dots,q_r\}\subseteq Y(\overline{k})$ the branch points of $\pi|_{\Gamma}$, and for each $1\leq i \leq r$, let $t_{i} \in \pi|_{\Gamma}^{-1}(q_i)$ be any choice of preimage $q_i$. Then, for every prime $\ell$, the $G$-representations
\begin{equation} \label{eqn:riemann_hurwitz}V_{\ell}(\textup{Jac}_{X}) \ \ \textup{and} \ \
\textup{Ind}_{H}^{G}(\mathds{1})^{\oplus 2} \oplus \textup{Ind}_{\{e\}}^{G}(\mathds{1})^{\oplus (r+2\textup{g}(Y)-2)}  \ominus \bigoplus_{i=1}^{r} \textup{Ind}_{\textup{Stab}_{H}(t_i)}^{G} \mathds{1}
  \end{equation} 
  are isomorphic after extending scalars to $\mathbb{C}$. (Here $\textup{Stab}_{H}(t_i)$ denotes the stabiliser of $t_i$ in $H$).
\end{proposition}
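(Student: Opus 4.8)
The plan is to reduce the statement to the classical (geometrically connected) equivariant Riemann--Hurwitz formula, applied to the restricted cover $\pi|_{\Gamma}$, and then induce the resulting identity up from $H$ to $G$. To begin, I would pass to $\overline{k}$: base change along $k\hookrightarrow\overline{k}$ leaves $V_{\ell}(\Jac_{X})$ unchanged as a $\mathbb{Q}_{\ell}[G]$-module (it affects only the Galois action), so we may replace $X$ and $Y$ by $X_{\overline{k}}$ and $Y_{\overline{k}}$, and $Y_{\overline{k}}$ is connected since $Y$ is geometrically connected. As representations of the finite group $G$ over a field of characteristic $0$ are determined by their characters, it then suffices to prove the isomorphism after extending scalars to $\mathbb{C}$, and we may pass freely between $\mathbb{C}$, $\overline{\mathbb{Q}}_\ell$, and character identities.

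Next I would analyse the $G$-set $\pi_{0}(X_{\overline{k}})$ of connected components. Because $\pi$ realises $Y$ as $X/G$ and $Y_{\overline{k}}$ is connected, $G$ acts transitively on $\pi_{0}(X_{\overline{k}})$, which is therefore $G$-isomorphic to $G/H$ with $H=\textup{Stab}_{G}(\Gamma)$. Correspondingly $\Jac_{X_{\overline{k}}}\cong\prod_{\Gamma'\in\pi_{0}(X_{\overline{k}})}\Jac_{\Gamma'}$, with $G$ permuting the factors according to $G/H$; hence, since a module that decomposes as a direct sum of subspaces transitively permuted by $G$ is induced from the stabiliser of one of the summands, there is an isomorphism of $\mathbb{Q}_{\ell}[G]$-modules $V_{\ell}(\Jac_{X})\cong\textup{Ind}_{H}^{G}V_{\ell}(\Jac_{\Gamma})$. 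It thus remains to compute $V_{\ell}(\Jac_{\Gamma})$ as an $H$-representation.

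For this I would first check that $\pi|_{\Gamma}\colon\Gamma\to Y_{\overline{k}}$ is an $H$-Galois cover of connected smooth curves: it is finite and surjective, and over each $y\in Y_{\overline{k}}$ the (reduced) fibre $\pi^{-1}(y)$ is a transitive $G$-set mapping $G$-equivariantly onto $\pi_{0}(X_{\overline{k}})=G/H$, so $\pi^{-1}(y)\cap\Gamma$ is a single $H$-orbit; in particular $\pi|_{\Gamma}$ has degree $|H|$ with $H$ acting simply transitively on a generic fibre, whence $\Gamma/H\cong Y_{\overline{k}}$. Applying the classical equivariant Riemann--Hurwitz formula \cite[Proposition 1.1]{Riemann-Hurwitz} to $\pi|_{\Gamma}$, with branch points $q_1,\dots,q_r$ and chosen preimages $t_i$, then yields an isomorphism of $\mathbb{C}[H]$-representations
\[ V_{\ell}(\Jac_{\Gamma})\;\cong\;\mathds{1}^{\oplus2}\;\oplus\;\mathbb{C}[H]^{\oplus(r+2\textup{g}(Y)-2)}\;\ominus\;\bigoplus_{i=1}^{r}\textup{Ind}_{\textup{Stab}_{H}(t_i)}^{H}\mathds{1}. \]
Inducing this from $H$ to $G$ — using exactness of induction, transitivity $\textup{Ind}_{H}^{G}\circ\textup{Ind}_{K}^{H}=\textup{Ind}_{K}^{G}$, the identity $\textup{Ind}_{H}^{G}\mathbb{C}[H]=\mathbb{C}[G]=\textup{Ind}_{\{e\}}^{G}\mathds{1}$, and $V_{\ell}(\Jac_{X})\cong\textup{Ind}_{H}^{G}V_{\ell}(\Jac_{\Gamma})$ from the previous paragraph — produces precisely \eqref{eqn:riemann_hurwitz}.

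The step I expect to be the main obstacle is the verification that $\pi|_{\Gamma}$ is genuinely $H$-Galois over $Y_{\overline{k}}$, together with the correct bookkeeping of its branch locus and inertia groups, so that the classical formula applies with exactly the data $q_i$, $t_i$, $\textup{Stab}_H(t_i)$ appearing in the statement; this requires a careful comparison of the fibres of $\pi$ and of $\pi|_{\Gamma}$ over both the unramified points and the branch points. Everything else — base-change invariance of the Tate module, the permutation-module/induction identity, and the final manipulation of induced characters — is routine.
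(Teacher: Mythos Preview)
Your proposal is correct and follows essentially the same approach as the paper: reduce to $\overline{k}$, use transitivity of the $G$-action on components to obtain $V_\ell(\Jac_X)\cong\textup{Ind}_H^G V_\ell(\Jac_\Gamma)$ (the paper packages this as Lemma~\ref{orbs_of_comps_lemma}), then apply the classical equivariant Riemann--Hurwitz formula to the $H$-cover $\pi|_\Gamma$ and induce up. The only cosmetic difference is that the paper verifies $\pi|_\Gamma$ is an $H$-cover by passing to function fields rather than by your direct fibre analysis.
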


\subsection{Order of Shafarevich--Tate groups modulo squares}
Finally, we express the size, modulo rational squares, of the $2$-primary part $\Sha_{0}(\textup{Jac}_{X}/K)[2^{\infty}]$ of the quotient of the  Shafarevich--Tate group of $\textup{Jac}_{X}$ by its maximal divisible subgroup,  in terms of local invariants. This extends a result of Poonen--Stoll \cite{poonen1999cassels} from Jacobians of geometrically connected curves to our setting. The local invariant $\mu_v(X)$ in the statement is detailed in Definition \ref{def:deficiency}. It extends the notion of deficiency   given in \cite[Section 8]{poonen1999cassels}.  

\begin{proposition}[Proposition \ref{Lemma: deficiency for quasi-nice curves and 2 part of Sha}] Let $X$ be a curve defined over a number field $K$. Then, for each place $v$ of $K$, there exists a local invariant $\mu_{v}(X)$ such that
\[\#\sha_{0}(\JX)[2^{\infty}] \equiv \prod_{\textup{$v$ place of $K$}} \mu_v(X) \mod \mathbb{Q}^{\times 2}. \] 
\end{proposition}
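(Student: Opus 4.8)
The plan is to reduce the statement to the geometrically connected case, where it is the theorem of Poonen--Stoll \cite{poonen1999cassels}, by exploiting the decomposition of $\JX$ provided by Proposition \ref{Prop:Curves-Algebras}(2). Recall that $\JX\cong\prod_{i=1}^n\Res_{k_i/k}\Jac_{X_i}$ with each $X_i/k_i$ geometrically connected. The first step is to record how the quantities appearing in the statement behave under products and Weil restriction. For products this is immediate: $\Sha$ of a product is the product of the $\Sha$'s, the maximal divisible subgroup of a product is the product of the maximal divisible subgroups, and hence $\#\sha_0(-)[2^\infty]$ is multiplicative; so it suffices to treat a single factor $A=\Res_{k_i/k}\Jac_{X_i}$. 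For the Weil restriction factor I would invoke the standard compatibility (Milne, or \cite{poonen1999cassels} itself uses this kind of argument) $\Sha(\Res_{k_i/k}B/K)\cong\Sha(B/k_i\otimes_k K)$ — more precisely, for $B/k_i$ and $L/k$ a finite extension, $H^1(L,\Res_{k_i/k}B)\cong\bigoplus_{w\mid v}H^1(L_w\otimes\text{(stuff)},B)$ via Shapiro's lemma, and these identifications are compatible with localisation, so that $\Sha(\Res_{k_i/k}B/K)\cong\Sha(B/k_iK)$, and the same for the maximal divisible subgroup. Thus $\#\sha_0(\JX/K)[2^\infty]\equiv\prod_i\#\sha_0(\Jac_{X_i}/k_iK)[2^\infty]\bmod(K^\times)^2$, and since $X_i/k_i$ is a geometrically connected (indeed, nice) curve over the number field $k_iK$, each factor is governed by Poonen--Stoll.

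The second step is to assemble the local invariant. By the Poonen--Stoll theorem applied to $X_i$ over the number field $K_i:=k_iK$, there are local deficiency invariants $\mu_{w}(X_i)\in\Q^\times/\Q^{\times2}$, one for each place $w$ of $K_i$, with $\#\sha_0(\Jac_{X_i}/K_i)[2^\infty]\equiv\prod_w\mu_w(X_i)\bmod\Q^{\times2}$. For a place $v$ of $K$, I would then \emph{define}
\[
\mu_v(X):=\prod_{i=1}^n\ \prod_{w\mid v}\mu_w(X_i)\ \in\ \Q^\times/\Q^{\times2},
\]
the inner product running over places $w$ of $K_i$ above $v$ (of which there are finitely many). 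Grouping the product $\prod_i\prod_w\mu_w(X_i)$ by the place $v$ of $K$ below $w$ then yields exactly $\#\sha_0(\JX/K)[2^\infty]\equiv\prod_v\mu_v(X)\bmod\Q^{\times2}$, as required. One should check that this $\mu_v(X)$ is independent of the chosen decomposition in Proposition \ref{Prop:Curves-Algebras}(2); the cleanest route is to give an intrinsic description of $\mu_v(X)$ in terms of $X_{K_v}$ directly (generalising the deficiency of \cite[Section 8]{poonen1999cassels}, where deficiency of a curve over a local field measures the failure of the existence of a degree-one divisor class represented by a rational divisor), and then verify this intrinsic definition agrees with the decomposition-based one — which again reduces to the behaviour of deficiency under disjoint unions and restriction of scalars of the base field. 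This is presumably the content of Definition \ref{def:deficiency} referenced in the statement.

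The main obstacle I anticipate is not any single deep input but the bookkeeping needed to make the reduction to \cite{poonen1999cassels} fully rigorous in our generality: namely, checking that the Poonen--Stoll machinery — the Cassels--Tate pairing, its relation to $\Sha_0$, and the local-global decomposition of the associated quadratic/quadratic-refinement data — is genuinely compatible with Weil restriction of the base field and with passing to a connected-but-not-geometrically-connected curve (equivalently, to $\Jac$ over an extension field). In particular one must be careful that ``modulo squares'' is taken consistently: the local terms $\mu_w(X_i)$ naturally live in $Q^\times/Q^{\times2}$ regardless of the residue field of $w$, so no subtlety arises from the field $K_i$ being larger than $K$, but one does need the Hasse principle / reciprocity-type input of \cite{poonen1999cassels} (that the product of local deficiencies is a square times $\#\sha_0$) to be stated in a form that applies verbatim to $\Jac_{X_i}/K_i$. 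Since $X_i/K_i$ is a nice curve in the sense of \emph{loc. cit.}, this should be a direct citation rather than a reproof, so the real work is organisational: defining $\mu_v(X)$ intrinsically and proving the two definitions coincide.
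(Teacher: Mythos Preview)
Your proposal is correct and follows essentially the same route as the paper: decompose into connected components, identify each $\Jac_{X_i}$ with the Weil restriction of the Jacobian of a geometrically connected curve, use Shapiro's lemma to identify $\Sha_0[2^\infty]$, and cite Poonen--Stoll. The only organisational difference is that the paper gives the intrinsic definition of $\mu_v(X)$ first (your Definition~\ref{def:deficiency} guess is exactly right) and then verifies the product formula $\mu_v(X)=\prod_{w\mid v}\mu_{L_w}(Y)$, whereas you take the product formula as the definition; also note that since each $k_i$ already contains $K$, your ``$k_iK$'' is just $k_i$, and the decomposition in Proposition~\ref{Prop:Curves-Algebras}(2) is canonical (by connected components), so no independence check is actually needed.
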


\subsection{A worked example} \label{subsection:example} \label{ex:S3part1}
We now give a basic worked example illustrating several of the results above. Let $E/k$ be an elliptic curve given by a Weierstrass equation $y^2 = f(x)$ for a separable cubic $f(x)\in k[x]$. We view $E$ as a degree 3 cover of $\mathbb P^1$ via the $y$-coordinate map. The $S_3$-closure of $k(E)/k(y)$ has the following field diagram, where $D:\Delta^2 = G(y)$, $B: \{y^2 = f(x),\;\Delta^2 = G(y)\}$ and $G(y)\in k[y]$ is the discriminant of $f(x)-y^2$.

\begin{figure}[H]
\begin{center}\begin{tikzpicture}
    \node (Q1) at (0,0) {$k(y)$};
    \node (Q2) at (1.5,1.2) {$k(D) = k(y, \Delta)$};
    \node (Q3) at (-1.5,1.8) {$k(E) = k(y,x)$};
    \node (Q4) at (0,3) {$k(B) = k(y,x,\Delta)$};
    \draw (Q4)--(Q2);
    \draw (Q4)--(Q3);
    \draw (Q3)--(Q1);
    \draw (Q2)--(Q1);
    \end{tikzpicture} \end{center}\end{figure}
Consider the case of $k=\Q$ and $E:y^2=x^3+b$. 
The discriminant curve (the quotient of the $S_3$-closure by the action of $A_3$) becomes the normalisation of
$D: \Delta^2=-27(y^2-b)^2$, which is not absolutely irreducible. Over $\Q_7$, this is not even irreducible, and the corresponding function field $\Q_7(y, \Delta)$ should be viewed as the algebra $\Q_7(y)[\Delta]/(\Delta^2+27)$, rather than as the trivial extension of $\Q_7(y)$. 
\begin{figure}[H]
\begin{tikzpicture}
    \node (Q1) at (0,0) {$\mathbb{P}^{1}$};
    \node (Q2) at (1.5,1.2) {$\mathbb{P}^{1}/\mathbb{Q}(\zeta_3)$};
    \node (Q3) at (-1.5,1.8) {$E=\{y^2=x^3+b\}$};
    \node (Q4) at (0,3) {$E/\mathbb{Q}(\zeta_3)$};
    \node (Q5) at (0,-0.7) {\textbf{Case 1:} over $\mathbb{Q}$};
    \draw (Q4)--(Q2);
    \draw (Q4)--(Q3);
    \draw (Q3)--(Q1);
    \draw (Q2)--(Q1); \end{tikzpicture} 
\qquad    
\begin{tikzpicture}
    \node (Q1) at (0,0) {$\mathbb{P}^{1}$};
    \node (Q2) at (1.5,1.2) {$\mathbb{P}^{1}\amalg \mathbb{P}^{1}$};
    \node (Q3) at (-1.5,1.8) {$E=\{y^2=x^3+b\}$};
    \node (Q4) at (0,3) {$E \amalg E$};
    \node (Q5) at (0,-0.7) {\textbf{Case 2:} over $\mathbb{Q}_7$};
    \draw (Q4)--(Q2);
    \draw (Q4)--(Q3);
    \draw (Q3)--(Q1);
    \draw (Q2)--(Q1); \end{tikzpicture} 
    \end{figure}
    
For the remainder of this section, we use Example \ref{ex:S3part1} above to illustrate applications of Theorems \ref{thm:introgaloisdescent} and \ref{kani-rosen}, and Proposition \ref{Thm:Equivariant Riemann--Hurwitz} . 

\subsubsection{\noindent {$G$-module structure of rational points and Selmer groups.}} \label{subsub_quotients} Suppose that $E$ is defined over a number field $K$. Then, $S_3$ acts on $B$ by $K$-automorphisms. We can then consider the decomposition $\Jac_B(K)\otimes\Q\cong\triv^{\oplus n}\oplus\epsilon^{\oplus m}\oplus \rho^{\oplus r}$ into irreducibles, where $n$, $m$, $r\in\N$ and $\triv $, $\epsilon$, $\rho$ denote, respectively, the trivial, sign and 2-dimensional irreducible representation of $S_3$. Applying Theorem \ref{thm:introgaloisdescent}(1) with $G = S_3$ and comparing dimensions gives $n=\rk\Jac_{\P^1}=0$. Repeating with $G = C_2$ and $C_3$ gives $\rk E= r$ and $\rk\Jac_D = m$. 
Arguing similarly, but instead applying part (2), we see that $\mathcal{X}_{p}(\Jac_B)\cong\epsilon^{\oplus \rk_p \Jac_D}\oplus \rho^{\oplus \rk_p E}$.

\subsubsection{Equivariant Riemann--Hurwitz} We consider the case of $k=\Q$ and $E:y^2=x^3+b$ as in \S\ref{ex:S3part1}.
To apply Proposition \ref{Thm:Equivariant Riemann--Hurwitz}, we consider the degree $3$ cyclic cover \hbox{$E/\mathbb{Q}(\zeta_3) \to \mathbb{P}^{1}/\mathbb{Q}(\zeta_3)$} given by $(x,y) \to x$. 
This is branched at $\{\infty, \sqrt{b}, - \sqrt{b}\}$ each with ramification degree $3$. By applying Proposition \ref{Thm:Equivariant Riemann--Hurwitz} with $G=S_3, H=C_3, r=3$ and $\textup{Stab}_{H}(t_i)=C_3$ for all $i$, we deduce that $V_{\ell}(\textup{Jac}_{B}) \cong \rho^{\oplus 2}.$

\subsubsection{Construction of isogenies} We now give an illustration of Theorem \ref{kani-rosen}. Fix generators $g,h$ such that $S_3 = \langle g,h \ | \ g^3=h^2=e, hg=g^2h \rangle$ and $k(E)=k(B)^{\langle h \rangle}$, $k(D) = k(B)^{\langle g \rangle}$. Consider the $\mathbb{Z}[S_3]$-module homomorphism
\begin{equation*}
\phi: \mathbb{Z}[S_3/\langle h \rangle ]x_1 \oplus \mathbb{Z}[S_3/\langle h \rangle ]x_2 \oplus \mathbb{Z}[S_3/\langle g \rangle ]x_3 \to \mathbb{Z}[S_3]y
\end{equation*}
determined by $x_1 \mapsto (1+h)y , x_2 \mapsto (g+hg)y , x_3 \mapsto (1+g+g^2)y$. With respect to the $\mathbb{Z}$-bases $\{x_1,gx_1,g^2x_1,x_2,gx_2,g^2x_2,x_3,hx_3\}$ and $\{y,gy,g^2y,hy,ghy,g^2hy\}$, $\phi$ is the matrix
\begin{equation*}
 \left[
\begin{array}{ccc|ccc|cc}
1 & 0 & 0 & 0 & 0 & 1 & 1 & 0 \\
0 & 1 & 0 & 1 & 0 & 0 & 1 & 0 \\
0 & 0 & 1 & 0 & 1 & 0 & 1 & 0 \\
1 & 0 & 0 & 0 & 0 & 1 & 0 & 1 \\
0 & 1 & 0 & 1 & 0 & 0 & 0 & 1 \\
0 & 0 & 1 & 0 & 1 & 0 & 0 & 1 
\end{array}
\right] 
\end{equation*}
The induced homomorphism  $f_{\phi}: \textup{Jac}_{B} \to E \times E \times \textup{Jac}_{D}$ afforded by Theorem \ref{kani-rosen} is 
\[f_{\phi} = \big(\pi_{E_*} , \pi_{E_*} \circ g_{*} , \pi_{D_*}  \big),\]
where $\pi_E: B \to E$ and $\pi_D: B\to D$ denote the quotient maps.
Transposing $\phi$, we obtain a $\mathbb{Z}[G]$-module map determined by $\phi^{\vee}(y) =( x_1,g^2x_2,x_3)$. The induced homomorphism \[ f_{\phi^{\vee}}: E \times E \times \textup{Jac}_{D} \to \textup{Jac}_{B} \] is given by $f_{\phi^{\vee}}((z_1,z_2,z_3))=\pi_{E}^{*}z_1+g^{*}\pi_{E}^{*}z_2+\pi_{D}^{*}z_3,$ which coincides with the dual  of $f_{\phi}$ (with respect to the canonical principal polarisations on each Jacobian).

\subsection{Notation and conventions} \label{General_notation} 

Throughout the paper, we implicitly fix  an embedding $\Q_\ell \hookrightarrow{} \mathbb{C}$ for all rational primes $\ell$. We will also adopt the following notation:

\medskip

\begin{tabular}{p{0.135\textwidth}p{0.81\textwidth}}
$k$ & field of characteristic $0$\\
$X$ & a curve defined over a field (in the sense of Convention \ref{convention})\\
$X/G$ & quotient of the curve $X$ by a finite group $G\leq \Aut_k(X)$\\
$A$ & an abelian variety over a field\\
$\JX$ & the Jacobian variety of $X$ (as in \S \ref{sec:curves_and_jacobians_background}, \S\ref{sec:quasi_nice_jac}); it is an abelian variety equipped with a canonical principal polarisation
\\
$\mathcal{X}_{p}(A)$ & $\text{Hom}_{\mathbb{Z}_{p}}(\varinjlim \text{Sel}_{p^n}(A), \mathbb{Q}_{p}/\mathbb{Z}_{p})\otimes \mathbb{Q}_{p}$, the dual $p^{\infty}$-Selmer group of $A$\\
$T_\ell(A)$& the $\ell$-adic Tate module of $A$\\
$V_\ell(A)$& $T_\ell(A)\otimes_{\mathbb{Z}_\ell}\mathbb{Q}_\ell$\\
$\Omega^{1}(A)$ & the vector space of regular differentials on $A$\\
$\mu(X)$ & encodes whether $X$ is deficient, see Definition \ref{def:deficiency}\\
$\langle\cdot,\cdot\rangle$ & the inner product on characters of $G$-representations \\
$\rho^H$ & for a $G$-representation $\rho$, and subgroup $H\leq G$, denotes the subspace of elements of $\rho$ fixed by $H$.
\end{tabular}\medskip

We will usually write $K$ to denote a number field. In addition, we sometimes write $X/K$ to indicate that the curve $X$ is defined over $K$. While there is potential for confusion with the notation for quotient curves, we hope the meaning will be clear from context.

\subsection{Acknowledgements}
We want to thank Vladimir Dokchitser and Holly Green for many insightful discussions and valuable suggestions. The second author is supported by the Engineering and Physical Sciences Research Council (EPSRC) grant EP/V006541/1 ‘Selmer groups, Arithmetic Statistics and Parity Conjectures'. During part of the period in which this work was carried out, they were supported by the Leibniz fellow programme at the Mathematisches Forschungsinstitut Oberwolfach, and would like to express their thanks for the excellent working conditions provided.

\section{\'{E}tale algebras and curves} \label{s:appAdam}

In the following two sections, we prove Proposition \ref{Prop:Curves-Algebras} and define the notion of the $S_n$-closure of a cover of curves. We begin with a brief review of \'{e}tale algebras and their corresponding Galois sets. We then discuss curves and their Jacobians. Recall that, as in Convention \ref{convention}, we assume that all curves are smooth and proper, but do not assume that they are connected, nor that their connected components are geometrically connected. As a result, while much of the material in this section is presumably standard, we have often been unable to locate suitable references.  For completeness, we show how to deduce all relevant properties of these curves and their Jacobians from the corresponding, and more standard, results in the geometrically connected case. 

\subsection{\'{E}tale algebras}

Let $F$ be a field and let $F^s$ be a fixed separable closure of $F$. Denote by $G_F=\textup{Gal}(F^s/F)$ the absolute Galois group of $F$. Given a  finite  \'{e}tale algebra $L$, we write 
\[S(L)=\textup{Hom}_F(L,F^s)\]
for the finite $G_F$-set of $F$-algebra homomorphisms $L\rightarrow F^s$. Here, the action of $\sigma \in G_F$ on $\phi \in \textup{Hom}_F(L,F^s)$ is given  
$(\sigma \cdot \phi)(x)=\sigma \phi(x)$, for $x\in L$.
If $L/F$ has degree $n$, then $S(L)$ has cardinality $n$.

\begin{remark} \label{rem:roots_of_poly}
Suppose $L=F[x]/(f(x))$, where $f(x)\in F[x]$ is a separable polynomial. Then $S(L)$ is isomorphic to the $G_F$-set of roots of $f(x)$ in $F^s$. 
\end{remark}

Starting instead with a finite $G_F$-set $S$, we write 
\[L(S)=\textup{Map}_{G_F}(S,F^s)\]
for the $F$-algebra of $G_F$-equivariant maps $S\rightarrow F^s$.  

\begin{proposition} \cite[Chapter V.10.10]{MR1080964}\label{prop:etale_sets}
The assignments $L\mapsto S(L)$ and $S\mapsto L(S)$ induce a contravariant equivalence  of categories
\[\left\{\begin{array}{c}\textup{finite \'{e}tale }$F$\textup{-algebras} \\\textup{ and }\\ $F$\textup{-algebra homomorphisms} \end{array}\right\}  \stackrel{ }{\longleftrightarrow} \left\{\begin{array}{c}\textup{finite discrete }G_F\textup{-sets} \\\textup{ and } \\ G_F\textup{-equivariant maps}\end{array}\right\} .\]
\end{proposition}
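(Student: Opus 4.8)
The plan is to prove the equivalence by writing down explicit (mutually quasi-inverse) functors together with a natural unit and counit, and then checking these are isomorphisms by bootstrapping from the classical correspondence between finite separable field extensions and transitive $G_F$-sets.

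First I would verify that the two constructions are well defined and functorial. For a finite \'{e}tale $F$-algebra $L$, the $G_F$-action on $S(L)=\mathrm{Hom}_F(L,F^s)$ factors through $\mathrm{Gal}(N/F)$, where $N/F$ is any finite Galois extension containing the images of all $F$-embeddings of $L$ into $F^s$; hence $S(L)$ is a finite discrete $G_F$-set, of cardinality $\dim_F L$, and an $F$-algebra homomorphism $L\to L'$ induces by precomposition a $G_F$-equivariant map $S(L')\to S(L)$. Conversely, decomposing a finite discrete $G_F$-set into orbits $S\cong\coprod_i G_F/H_i$ with each $H_i$ open, one has $L(S)=\mathrm{Map}_{G_F}(S,F^s)\cong\prod_i (F^s)^{H_i}$, a finite product of finite separable field extensions of $F$ and so a finite \'{e}tale $F$-algebra; a $G_F$-equivariant map $S\to S'$ induces by precomposition an $F$-algebra homomorphism $L(S')\to L(S)$.

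Next I would introduce the counit $\epsilon_L\colon L\to L(S(L))$, $x\mapsto(\phi\mapsto\phi(x))$, and the unit $\eta_S\colon S\to S(L(S))$, $s\mapsto(\mathrm{ev}_s\colon f\mapsto f(s))$; routine checks show $\epsilon_L$ is an $F$-algebra homomorphism, $\eta_S$ is $G_F$-equivariant, and both are natural. To see they are isomorphisms I would argue by additivity: both functors send finite products of algebras to finite disjoint unions of $G_F$-sets and conversely, compatibly with $\epsilon$ and $\eta$, so it suffices to treat (i) $L$ a finite separable field extension of $F$, and (ii) $S$ a transitive finite discrete $G_F$-set. In case (i), $\epsilon_L$ is injective since $L$ is a field and $\epsilon_L\neq 0$ (there is at least one $F$-embedding $L\hookrightarrow F^s$), while $\dim_F L(S(L))=\#S(L)=[L:F]=\dim_F L$, so $\epsilon_L$ is an isomorphism. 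In case (ii), write $S\cong G_F/H$ with $H$ open and set $M=(F^s)^H$; then $L(S)\cong M$ is a finite separable extension of degree $[G_F:H]=\#S$, so $\#S(L(S))=\#S$, and $\eta_S$ is a $G_F$-equivariant map between transitive $G_F$-sets of equal finite cardinality, hence bijective as soon as it is injective --- and injectivity holds because $G_F$-equivariant maps $G_F/H\to F^s$ correspond to $H$-fixed elements of $F^s$, i.e.\ to elements of $M$, and these separate the cosets of $H$ (if $g^{-1}g'\notin H$ then $g^{-1}g'$ moves some element of $M$). Since $\epsilon$ and $\eta$ are natural isomorphisms, the functoriality recorded above forces $S(-)$ and $L(-)$ to be mutually quasi-inverse, giving the claimed contravariant equivalence.

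The only genuinely nontrivial ingredient is the fundamental theorem of infinite Galois theory (Krull's theorem): the inclusion-reversing bijection between open subgroups $H\leq G_F$ and finite separable subextensions of $F^s/F$, under which $H\mapsto (F^s)^H$ and, crucially, $H$ is recovered as the pointwise stabiliser of $(F^s)^H$. This is what makes $L(S)$ genuinely \'{e}tale of the expected dimension and what powers the injectivity of $\eta_S$. Everything else is bookkeeping: the structure theorem presenting a finite \'{e}tale $F$-algebra (in characteristic $0$, any finite reduced commutative $F$-algebra) as a finite product of finite separable field extensions, the naturality diagrams, and the matching-up of the product/coproduct decompositions on the two sides.
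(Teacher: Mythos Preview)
Your argument is correct, but note that the paper does not actually prove this proposition: it is stated with a citation to Bourbaki \cite[Chapter V.10.10]{MR1080964} and no proof is given. What you have written is essentially the standard proof of this classical equivalence (sometimes called Grothendieck's formulation of Galois theory), and it is sound: the unit/counit setup is correct, the reduction to fields and transitive $G_F$-sets via the product/disjoint-union decompositions is the right move, and the endgame appeals to exactly the right ingredient, namely the open-subgroup/finite-separable-subextension correspondence from infinite Galois theory. One minor stylistic point: in case (ii) you could shortcut the injectivity of $\eta_S$ by observing that any $G_F$-equivariant map between transitive $G_F$-sets is automatically surjective, hence bijective once the cardinalities agree; but your explicit separation-of-cosets argument is fine too.
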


\subsubsection{$S_n$-closure} \label{Snclo_subsec}

Let $F$, $L$ and $S(L)$ be as above,  let $n$ be the degree of $L$ over $F$, and write $[n]=\{1,...,n\}$.  Denote by $\textup{Bij}([n],S(L))$ the $G_F$-set of bijections from $[n]$ to $S(L)$. Here the action of $G_F$ on $\phi \in \textup{Bij}([n],S(L))$ is given by 
$(\sigma \cdot \phi)(m)=\sigma \phi(m)$, for $m\in [n].$

\begin{definition} \label{def: Sn-closure}
The \emph{$S_n$-closure} $\widetilde{L}$ of $L/F$ is defined to be the   \'{e}tale $F$-algebra associated via Proposition \ref{prop:etale_sets} to $\textup{Bij}([n],S(L))$.
\end{definition}

\begin{remark}
See \cite{MR3273311} for an alternative description of the $S_n$-closure which carries over to more general ring extensions. 
\end{remark}

Note that  $\textup{Bij}([n],S(L))$ carries a natural right action of the symmetric group $S_n$, given by precomposition. This  commutes with the $G_F$-action, so by  Proposition \ref{prop:etale_sets} we obtain a left action of $S_n$ on $\widetilde{L}$ by $F$-algebra automorphisms. Evaluation at $1\in [n]$ defines a $G_F$-equivariant surjection 
\begin{equation} \label{eq:surj_G_set}
\textup{Bij}([n],S(L)) \twoheadrightarrow S(L).
\end{equation}
Via Proposition \ref{prop:etale_sets} this induces an inclusion $L\hookrightarrow \widetilde{L}$ of $F$-algebras. In particular, $\widetilde{L}$ is naturally an extension of $L$.

\begin{remark} \label{rem_s_n_clo}
The map \eqref{eq:surj_G_set} realises $S(L)$ as the quotient of $\textup{Bij}([n],S(L))$ by the action of $1\times S_{n-1}$. From this we deduce that $L$ is the algebra of fixed points for the action of $1\times S_{n-1}$ on $\widetilde{L}$, i.e. $L=\widetilde{L} ^{1\times S_{n-1}}.$
\end{remark}

\begin{lemma} \label{lem: Discriminant Curve}
Suppose that the characteristic of $F$ is different from $2$. Further, suppose that $L=F[x]/(f(x))$ for some separable polynomial $f(x)\in F[x]$. Write $\Delta \in F^\times$ for the polynomial discriminant of $f(x)$. 

Then the algebra $\widetilde{L}^{A_n}$ of fixed points for the action of the alternating group $A_n$ is isomorphic, as an $F$-algebra,  to 
$F[y]/(y^2-\Delta).$
\end{lemma}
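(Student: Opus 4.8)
The plan is to identify the relevant $G_F$-set and compute its quotient by $A_n$ directly, then translate back through Proposition \ref{prop:etale_sets}. By Remark \ref{rem:roots_of_poly}, writing $r_1,\dots,r_n\in F^s$ for the roots of $f(x)$, the set $S(L)$ is $G_F$-equivariantly identified with $\{r_1,\dots,r_n\}$ (with the $G_F$-action being the restriction of the natural action on $F^s$), and hence $\textup{Bij}([n],S(L))$ is identified with the set of orderings $(r_{\pi(1)},\dots,r_{\pi(n)})$ of the roots, i.e. with a $G_F$-stable $S_n$-torsor inside $(F^s)^n$. The algebra $\widetilde{L}^{A_n}$ corresponds, via the contravariant equivalence, to the quotient $G_F$-set $\textup{Bij}([n],S(L))/A_n$, the set of orbits under the right $A_n$-action.

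First I would observe that since $\textup{char}(F)\neq 2$ the index-two subgroup $A_n\leq S_n$ has exactly two cosets, so the $S_n$-torsor $\textup{Bij}([n],S(L))$ breaks into exactly two $A_n$-orbits; thus $\textup{Bij}([n],S(L))/A_n$ is a $G_F$-set of cardinality $2$, and $\widetilde{L}^{A_n}$ is an étale $F$-algebra of degree $2$. To pin down the $G_F$-action on these two points, the key is the classical square root of the discriminant: choose a labelling of the roots and set $\delta=\prod_{i<j}(r_i-r_j)\in F^s$, so that $\delta^2=\Delta\in F^\times$. The two $A_n$-orbits are then canonically the fibres of the map $\textup{Bij}([n],S(L))\to\{\delta,-\delta\}$ sending an ordering $\phi$ to $\textup{sgn}(\phi)\cdot\delta$ (equivalently $\prod_{i<j}(\phi(i)-\phi(j))$ with the ordering induced by $\phi$); this map is $G_F$-equivariant and $A_n$-invariant, and it realises $\{\delta,-\delta\}$ as $\textup{Bij}([n],S(L))/A_n$. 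But $\{\delta,-\delta\}\subseteq F^s$ is exactly the $G_F$-set of roots of $y^2-\Delta$, which by Remark \ref{rem:roots_of_poly} corresponds to the $F$-algebra $F[y]/(y^2-\Delta)$. Functoriality of the equivalence then gives the claimed $F$-algebra isomorphism $\widetilde{L}^{A_n}\cong F[y]/(y^2-\Delta)$.

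The only genuine subtlety — and the step I would be most careful about — is making precise the passage from "quotient of a $G_F$-set" to "fixed subalgebra" compatibly with the $F$-algebra structure, i.e. checking that under Proposition \ref{prop:etale_sets} the sub-$G_F$-set-quotients of $\textup{Bij}([n],S(L))$ correspond exactly to the $S_n$-fixed subalgebras, and that a surjection of $G_F$-sets induces the expected inclusion of algebras (this is the same mechanism already used in Remark \ref{rem_s_n_clo} to identify $L=\widetilde{L}^{1\times S_{n-1}}$, so I would simply invoke that pattern). One should also note that $\delta\neq-\delta$ precisely because $\textup{char}(F)\neq 2$ and $\Delta\neq 0$ (as $f$ is separable), which is what guarantees the degree-$2$ algebra is as claimed rather than degenerating. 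Everything else is a direct unwinding of definitions, so no lengthy computation is needed.
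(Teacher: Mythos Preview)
Your proposal is correct and follows essentially the same argument as the paper: both identify $\widetilde{L}^{A_n}$ with the $G_F$-set $\textup{Bij}([n],\{r_1,\dots,r_n\})/A_n$, identify $F[y]/(y^2-\Delta)$ with the $G_F$-set $\{\pm\prod_{i<j}(r_i-r_j)\}$, observe these two-element $G_F$-sets are isomorphic, and conclude via Proposition~\ref{prop:etale_sets}. Your version is simply more explicit about the quotient map and the role of the hypotheses $\textup{char}(F)\neq 2$ and separability.
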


\begin{proof}
 Let $\mathcal{R}=\{x_1,...,x_n\}$ be the $G_F$-set of roots of $f(x)$ in $F^s$. From Remark \ref{rem:roots_of_poly} and the definition of $\widetilde{L}$, we see that $\widetilde{L}^{A_n}$ corresponds under Proposition \ref{prop:etale_sets} to the quotient of $\textup{Bij}([n],\mathcal{R})$ by the action of $A_n$. On the other hand, from the definition of the discriminant, we see that $F[y]/(y^2-\Delta)$ corresponds to the two element $G_F$-set
 \[S=\Big\{\pm \prod_{i<j}(x_i-x_j)\Big\}.\]
Since these two $G_F$-sets are isomorphic, the result follows from Proposition \ref{prop:etale_sets}.
\end{proof}

\subsubsection{G-closure} With a view to applications, we discuss a related notion to that of $S_n$-closure that works for more general groups. Let $L$ be an \'{e}tale algebra and let $\textup{Sym } S(L) $ denote the group of permutations of $S(L)$. We make $\textup{Sym } S(L) $ into a $G_F$-set by setting 
\[\qquad \qquad\qquad\qquad\qquad(\sigma\cdot \phi)(x)= \sigma (\phi(x)),\quad \quad \sigma \in G_F,~ \phi \in \textup{Sym }S(L), ~x\in S(L).\] 
Note that a choice of ordering on $S(L)$ identifies the \'{e}tale algebra associated to $\textup{Sym } S(L)$ with the $S_n$-closure of $L$. 

\begin{definition} \label{defi_G_closure}
Let $G$ be a subgroup of $\textup{Sym } S(L)$. Suppose that $G$ acts transitively on $S(L)$, and contains the image of the natural map $G_F\rightarrow \textup{Sym } S(L)$ arising from the $G_F$-action on $S(L)$. The  action of $G_F$ on $\textup{Sym }S(L)$ described  above restricts to an action of $G_F$ on $G$. We define the  \textit{$G$-closure} of $L$, denoted $L(G)$, to be the \'{e}tale $F$-algebra associated to $G$.
\end{definition}

\begin{remark}
Under the assumption that $G$ contains the image of $G_F\rightarrow \textup{Sym } S(L)$, the assumption that $G$ acts  transitively on $S(L)$ is automatic if $L$ is a field, since then $G_F$ acts transitively on $S(L)$. In this case, if one takes $G$ to be the image of $G_F$ in $\textup{Sym } S(L)$, then $L(G)$ is the Galois closure of the field extension $L/F$.
\end{remark}

Suppose that $G$ satisfies the assumptions of Definition \ref{defi_G_closure}. The action of $G$ on itself by right multiplication commutes with the $G_F$-action, so we obtain a left $G$-action on the $G$-closure of $L$ by $F$-algebra automorphisms.  Choose $x\in S(L)$ and let $G_x$ denote the stabiliser of $x$. Then the map $\phi\mapsto \phi(x)$ gives a surjection of $G_F$-sets $\pi_x:G\twoheadrightarrow S(L)$, realising $S(L)$ as the quotient of $G$ by $G_x$. In particular, this induces an $F$-algebra inclusion \[\iota_x:L\hookrightarrow L(G),\] which identifies $L$ with the  $F$-subalgebra of $L(G)$ consisting of elements fixed by $G_x$.

\begin{remark}
The inclusion of $L$ into $L(G)$ is independent of the choice of $x$ in the following sense. Given $x,y\in X$, say with $y=\phi(x)$ for some $\phi\in G$, we have $\pi_x=\pi_y R_{\phi^{-1}}$, where $R_{\phi^{-1}}:G\rightarrow G$ denotes right multiplication by $\phi^{-1}$. In particular, we have a commutative triangle of $F$-algebras 
\[
\xymatrix{L\ar[dr]^{\iota_y}\ar[r]^{\iota_x}&L(G) \ar[d]^{\phi} \\&L(G).}
\] 
\end{remark}

\begin{remark}
If $G$ contains the image of $G_F\rightarrow \textup{Sym } S(L)$, but the action of $G$ on $S(L)$ is not assumed to be transitive, then Definition \ref{defi_G_closure} still makes sense and produces an $F$-algebra $L(G)$. However, in this case there need not be any $F$-algebra inclusion $L\hookrightarrow L(G)$ (as is the case, for example, when $L=F\times F$ and $G$ is trivial). Because of this, we have elected not to define the $G$-closure in this generality.
\end{remark}

\subsection{Curves and function fields} \label{sec:curves_and_function_fields}

In what follows, we take $k$ to be a field of characteristic $0$. Let $X/k$ be a connected curve. Then $X$ is integral, so we may consider the function field $k(X)$ of $X$. One has that $X$ is geometrically connected if and only if $k(X)\cap \bar{k}=k$, in which case the function field of $X_{\bar{k}}$ is equal to $k(X)\otimes_k \bar{k}$ \cite[Corollary 3.2.14]{MR1917232}.
 
Given a non-constant morphism of connected $k$-curves $X\rightarrow Y$, there is an induced $k$-algebra homomorphism $k(Y)\hookrightarrow k(X)$. 
 
 \begin{proposition} \cite[ Tag 0BY1]{stacks-project} \label{curves_vs_function_fields}
 The map sending a connected $k$-curve to its function field induces a contravariant equivalence of categories between:
 \begin{itemize}
 \item[(1)] connected $k$-curves and non-constant $k$-morphisms,  
 \end{itemize}
 \begin{itemize}
 \item[(2)] finitely generated field extensions $F/k$ of transcendence degree $1$ and $k$-algebra homomorphisms.
 \end{itemize}
 \end{proposition}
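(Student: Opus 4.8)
The plan is to check in turn that the assignment $X\mapsto k(X)$ is a well-defined contravariant functor on the category (1), that it is fully faithful, and that it is essentially surjective onto (2).

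First I would verify that the functor is well-defined. A connected $k$-curve $X$ is smooth over $k$, hence regular and reduced; being regular (so normal) and connected, it is irreducible, hence integral, so it has a function field $F=k(X)$, namely the residue field at its generic point. As $X$ is of finite type over $k$ of dimension $1$, this $F$ is a finitely generated extension of $k$ of transcendence degree $1$, so it lies in category (2). A non-constant $k$-morphism $f\colon X\to Y$ of connected curves is dominant (its image is irreducible and not a point, hence dense in the irreducible curve $Y$), so it sends the generic point of $X$ to that of $Y$ and induces a $k$-algebra homomorphism $f^{\sharp}\colon k(Y)\to k(X)$; contravariant functoriality is then immediate. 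Finally, $f\mapsto f^{\sharp}$ is injective on $\operatorname{Hom}$-sets: two $k$-morphisms $X\to Y$ inducing the same map of function fields agree on a dense open of $X$, hence everywhere, since $X$ is reduced and $Y$ is separated.

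The crux is fullness: every $k$-algebra homomorphism $\phi\colon k(Y)\to k(X)$ is of the form $f^{\sharp}$ for a (necessarily unique, by the previous step) non-constant $k$-morphism $f\colon X\to Y$. Such a $\phi$ determines a rational map $X\dashrightarrow Y$ over $k$, defined on some dense open $U\subseteq X$. Since $X$ is smooth, it is normal, so at each closed point $x\in X\setminus U$ the local ring $\mathcal{O}_{X,x}$ is a discrete valuation ring with fraction field $k(X)$; applying the valuative criterion of properness to $Y/k$ extends the map over $\operatorname{Spec}\mathcal{O}_{X,x}$, and since $X$ has dimension $1$ this accounts for all the remaining points. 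These local extensions glue (they agree on the dense overlaps and $Y$ is separated) to a $k$-morphism $f\colon X\to Y$. By construction $f$ induces $\phi$ on function fields, and $f$ is dominant (hence non-constant) because the field homomorphism $\phi$ is injective. Together with the injectivity above, this proves the functor is fully faithful.

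For essential surjectivity, given $F/k$ finitely generated of transcendence degree $1$, pick $t\in F$ transcendental over $k$, so that $F/k(t)$ is finite, and let $X$ be the normalization of $\mathbb{P}^1_k$ in $F$, i.e. the relative spectrum of the integral closure of $\mathcal{O}_{\mathbb{P}^1_k}$ in $F$. Then $X\to\mathbb{P}^1_k$ is finite (finiteness of integral closure in a finite extension), so $X$ is proper over $k$; $X$ is integral with function field $F$, hence connected; and $X$ is normal of dimension $1$, hence regular, hence smooth over $k$ since $k$ is perfect (as $\operatorname{char}k=0$). Thus $X$ is a connected $k$-curve with $k(X)\cong F$, which together with the preceding paragraphs gives the desired contravariant equivalence of categories. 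The main obstacle is precisely the fullness step, i.e. upgrading a homomorphism of function fields to an actual morphism of curves; this is exactly where the hypotheses of Convention \ref{convention} are used — smoothness of $X$ supplies the discrete valuation rings and properness of $Y$ supplies the extension — whereas the absence of any geometric-connectedness assumption is irrelevant, since every step takes place over the fixed base $k$.
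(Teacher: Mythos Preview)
Your argument is correct and is the standard route to this classical equivalence: faithfulness via density, fullness via the valuative criterion of properness (using that smooth one-dimensional local rings are DVRs), and essential surjectivity via normalisation of $\mathbb{P}^1_k$ in $F$ together with perfectness of $k$ to pass from normal to smooth.

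There is nothing to compare against in the paper itself: the authors do not give a proof of this proposition, but simply cite \cite[Tag 0BY1]{stacks-project} and move on. Your write-up supplies exactly the argument one finds there (or in any standard reference), so in effect you have filled in what the paper deliberately outsources. One minor remark: in your fullness step you might make explicit why the local extensions at the finitely many points of $X\setminus U$ glue with the map on $U$ --- this is because any two morphisms to the separated scheme $Y$ agreeing on a dense open agree everywhere, which you already invoked for faithfulness --- but this is a matter of exposition rather than a gap.
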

 
 Henceforth, we refer to a finitely generated field extension  $F/k$ of transcendence degree $1$ simply as a `function field'.
 
\begin{remark} \label{nice_vs_nice_and_geom_irred}
 Suppose that $F/k$ is a function field, and write $k_F$ for the maximal algebraic extension of $k$ contained in $F$. Then $k_F$ is a finite extension, and $F/k_F$ is a function field in which $k_F$ is algebraically closed. Thus $F$ corresponds via Proposition \ref{curves_vs_function_fields} both to  a geometrically connected curve $X$ over $k_F$, and a connected curve over $k$. The latter is the scheme $X$ equipped with the structure map  $X\rightarrow \textup{Spec }k_F \rightarrow \textup{Spec }k.$
\end{remark}

\subsection{General curves} \label{sec:quasi_nice_curve_defs}

We now turn to the more general curves satisfying Convention \ref{convention}. Such a curve $X$ is a finite disjoint union of connected $k$-curves. One advantage of considering such curves is that  their base-change to an arbitrary field extension $k'/k$ again satisfies Convention~\ref{convention}.

 \begin{notation}  \label{function_algebra_defi}
 Let $X/k$ be a curve. We denote by $\mathcal{K}_X$ the sheaf of total quotient rings on $X$ in the sense of  \cite[ Tag 02AR]{stacks-project}, and write $k(X)=\Gamma(X,\mathcal{K}_X)$ for the $k$-algebra of global sections of $\mathcal{K}_X$. 
 \end{notation}
 
Writing  $X=\bigsqcup_i X_i$ as a disjoint union of connected $k$-curves, we have $k(X)=\prod_i k(X_i)$. In particular, we can write this decomposition as
 \begin{equation} \label{decomp_of_quasi_nice}
 X=\bigsqcup_{x\in \textup{ Spec } k(X)}X_x,\end{equation}
 where, for $x\in \textup{ Spec }k(X)$, we denote by $X_x$ the connected $k$-curve corresponding via Proposition~\ref{curves_vs_function_fields} to the residue field of $x$. 

 \subsection{Covers of curves and \'{e}tale algebras}

Let $Y/k$ be a geometrically connected $k$-curve, and let $F=k(Y)$ be the function field of $Y$.  We will be interested in the following notion of cover of $Y$. 

\begin{definition} \label{def:cover_of_curve}
A \textit{cover} $f:X\rightarrow Y$ consists of:
\begin{itemize}
\item a  $k$-curve $X$ and,
\item a finite surjective $k$-morphism $f:X\rightarrow Y$, via which we view $X$ as an $Y$-scheme. 
\end{itemize}
A \textit{morphism of covers} $X\rightarrow X'$ is a finite surjective $Y$-morphism from $X$ to $X'$. 
We denote by $\textbf{Cov}_{Y}$ the category of covers of $Y$.
\end{definition}

We will show that $\textup{\textbf{Cov}}_{Y}$ is contravariantly equivalent to the category $\textbf{\'{E}t}_F^{\textup{inj}}$ consisting of \'{e}tale $F$-algebras along with injective $F$-algebra homomorphisms.
 
\begin{notation}
Let $f:X\rightarrow Y$ be a cover, and let $k(X)$ be as in Notation \ref{function_algebra_defi}. The morphism $f:X\rightarrow Y$ induces an inclusion $F\hookrightarrow k(X)$, making $k(X)$ into an \'{e}tale $F$-algebra. Similarly, given a morphism of covers $\phi:X\rightarrow Y$, pullback of functions defines an injective $F$-algebra homomorphism $\phi^*:k(Y)\hookrightarrow k(X)$. 
\end{notation}

  \begin{proposition} \label{curves_vs_etale_algebras} 
 The map sending a cover $X\rightarrow Y$ to the $F$-algebra $k(X)$   induces a contravariant equivalence of categories between $\textup{\textbf{Cov}}_{Y}$ and $\textbf{\textup{\'{E}t}}_F^{\textup{inj}}$.
 \end{proposition}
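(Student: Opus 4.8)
The plan is to combine the equivalence for connected curves (Proposition~\ref{curves_vs_function_fields}) with the equivalence for \'etale algebras (Proposition~\ref{prop:etale_sets}), using the decomposition of a curve into its connected components and the structure theory of \'etale $F$-algebras as finite products of finite field extensions of $F$.

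First I would set up the two functors. Given a cover $f:X\to Y$, decompose $X=\bigsqcup_i X_i$ into its connected $k$-curves as in~\eqref{decomp_of_quasi_nice}; since $f$ is finite and surjective, each restriction $X_i\to Y$ is a non-constant morphism of connected curves, hence induces a finite field extension $F=k(Y)\hookrightarrow k(X_i)$, and $k(X)=\prod_i k(X_i)$ is a finite \'etale $F$-algebra (here using that $\operatorname{char}k=0$, so finite field extensions are separable). Conversely, given a finite \'etale $F$-algebra $L=\prod_i L_i$ with each $L_i/F$ a finite field extension, each $L_i$ is a function field over $k$ (finitely generated of transcendence degree $1$, since $F$ is and $L_i/F$ is finite algebraic), so Proposition~\ref{curves_vs_function_fields} produces a connected $k$-curve $X_i$ with $k(X_i)=L_i$ and a non-constant $k$-morphism $X_i\to Y$; one then needs to check this morphism is finite and surjective (surjectivity is automatic for non-constant morphisms of curves, and finiteness follows since $X_i$ is proper over $k$ and the morphism is quasi-finite, or directly from $[L_i:F]<\infty$). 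Setting $X=\bigsqcup_i X_i$ gives a cover. On morphisms: a morphism of covers $\phi:X\to X'$ is a finite surjective $Y$-morphism, and pullback gives an injective $F$-algebra homomorphism $k(X')\hookrightarrow k(X)$ (injectivity because $\phi$ is dominant on each component; one should note each component of $X$ maps into a single component of $X'$); conversely an injective $F$-algebra map $L'\hookrightarrow L$ decomposes componentwise and, via Proposition~\ref{curves_vs_function_fields}, yields a $Y$-morphism $X\to X'$ which is again finite and surjective.

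Then I would verify the two composites are naturally isomorphic to the identity. One composite is handled directly by the natural isomorphism $k(X)=\prod_i k(X_i)$ together with the component-wise application of Proposition~\ref{curves_vs_function_fields}; the other similarly. The key points to be careful about are: (i) that ``injective'' on the algebra side corresponds exactly to ``finite surjective'' on the curve side for morphisms (an arbitrary $Y$-morphism of curves need not be surjective, but an injection of \'etale $F$-algebras forces each component to map dominantly, hence surjectively, onto a component downstairs, and then finiteness is automatic); and (ii) matching up the $Y$-structure, i.e.\ that the morphism to $Y$ recovered from the $F$-algebra structure agrees with the original one --- this is exactly the functoriality in the base already present in Proposition~\ref{curves_vs_function_fields}.

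The main obstacle I anticipate is bookkeeping rather than anything deep: carefully tracking how a morphism of covers permutes and maps the connected components, and confirming that the finiteness/surjectivity conditions defining $\textbf{Cov}_Y$ and the injectivity condition defining $\textbf{\'Et}_F^{\mathrm{inj}}$ correspond under the dictionary. In particular one must check that a $k$-morphism $X\to X'$ over $Y$ which induces an injection $k(X')\hookrightarrow k(X)$ is automatically finite and surjective --- surjectivity because non-constant morphisms of proper curves are surjective on each relevant component and the injectivity of the algebra map ensures no component of $X'$ is missed, and finiteness because a quasi-finite morphism of proper $k$-schemes is finite (or, concretely, because $k(X)$ is a finite $k(X')$-module). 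Once these compatibilities are in place, the equivalence follows formally from Propositions~\ref{curves_vs_function_fields} and~\ref{prop:etale_sets}.
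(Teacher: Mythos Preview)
Your approach is essentially the same as the paper's: both decompose an \'etale $F$-algebra as a product of finite field extensions of $F$, apply Proposition~\ref{curves_vs_function_fields} componentwise to produce the inverse functor, and identify surjectivity of the cover morphism with injectivity of the algebra map via the induced (surjective) map on $\operatorname{Spec}$. Your invocation of Proposition~\ref{prop:etale_sets} is superfluous---the argument never actually uses the $G_F$-set description---but otherwise your outline matches the paper's proof and is correct.
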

 
 \begin{proof}
 This is a consequence of  Proposition \ref{curves_vs_function_fields}. For convenience, we describe how to construct an inverse functor.  
 
Given an \'{e}tale $F$-algebra $L$, the product of the natural quotient maps gives an isomorphism of $F$-algebras
\[L\stackrel{\sim}{\longrightarrow}\prod_{x \in \textup{Spec } L}L(x).\]
Each residue field $L(x)$ is a finite field extension of $F$, so corresponds via Proposition \ref{curves_vs_function_fields} to a connected $k$-curve $X_x$ equipped with a finite surjective $k$-morphism $f_x:X_x\longrightarrow Y$.
Define the cover $X^L$ associated to $L$ to be  the disjoint union over $x\in \textup{Spec }L$ of the curves $X_x$, equipped with the finite surjective $k$-morphism   
\[\bigsqcup_{x \in \textup{Spec } L} f_x:X^L\rightarrow Y.\]

Now suppose we have an injective homomorphism of \'{e}tale $F$-algebras $\phi:L\rightarrow L'$, and write $\phi'$ for the induced map $\textup{Spec }L'\rightarrow \textup{Spec }L$. To ease notation, take $X=X^L$ and $X'=X^{L'}$ to be the corresponding curves. For each $x\in \textup{Spec } L'$, the homomorphism $L(\phi'(x))\rightarrow L'(x)$ induced by $\phi$ corresponds, via Proposition \ref{curves_vs_function_fields}, to a finite surjective $Y$-morphism 
\[\phi_x:X'_x\longrightarrow X_{\phi'(x)}.\]
Together, the $\phi_x$ give a finite $Y$-morphism $X'\rightarrow X$. To show that this map is surjective we first note that, since $\phi$ is injective, $\phi'$ is surjective. Thus every connected component of $X$ appears as $X_{\phi'(x)}$ for some $x\in \textup{Spec }L'$. This and surjectivity of the individual $\phi_x$ gives the result.
 \end{proof}

\begin{remark}
 Let $k'/k$ be a field extension. Given a cover $X\rightarrow Y$, its base-change to $k'$ \hbox{$X\times_{k}k'\rightarrow Y\times_{k}k'$} is a cover of the geometrically connected curve $k'$-curve $Y\times_{k}k'$. Since also $k'(X\times_{k}k')=k(X)\otimes_k k'$,  we see that
 the correspondence of Proposition \ref{curves_vs_etale_algebras} is compatible with base-change to arbitrary field extensions.
 \end{remark}
 
 \subsection{The $S_n$-closure of a cover of curves}
Proposition \ref{curves_vs_etale_algebras} allows us to  define the $S_n$-closure of any cover $X\rightarrow Y$, where $n$ is the degree of $k(X)$ over $k(Y)$. 
 
\begin{definition}\label{def:snclosurecurve} 
Let $f:X\rightarrow Y$ be a cover,  let $L=k(X)$ the   corresponding \'{e}tale $F$-algebra, and let $n=[L:k(Y)]$. Then we define the \emph{$S_n$-closure} of $X\rightarrow Y$ to be the cover   $\widetilde{X}\rightarrow Y$ corresponding, via  Proposition \ref{curves_vs_etale_algebras}, to the $S_n$-closure of $L$ (cf. Definition \ref{def: Sn-closure}).
\end{definition}
 
\begin{remark}
More generally, via Definition \ref{defi_G_closure}, one can define the $G$-closure of $X\rightarrow Y$ for suitable subgroups $G$ of $S_n$.
\end{remark}

\section{Jacobians}\label{sec:jacobians_quasi_nice_curves}
We now shift our attention from curves to Jacobians, with Convention \ref{convention} remaining in place. The purpose of this section is to explain how several standard results for Jacobians of geometrically connected curves extend to the slightly more general setting of Convention \ref{convention}.  As before, $k$ denotes a field of characteristic $0$. 

\subsection{The geometrically connected case} \label{sec:curves_and_jacobians_background}
 Let $X$ be a geometrically connected $k$-curve. We denote by $\Jac_{X}$ the Jacobian of $X$, which is an abelian variety over $k$ of dimension equal to the genus of $X$. Denote by $\lambda_X$ the canonical principal polarisation on $\Jac_{X}$.
Given a non-constant morphism $f:X\rightarrow Y$ of geometrically connected $k$-curves, we have  induced $k$-homomorphisms 
\[f^*:\Jac_{Y}\rightarrow \Jac_{X}\quad \textup{ and }\quad f_*:\Jac_{X} \rightarrow \Jac_{Y}.\]

We have $\Jac_{X}(\bar{k})=\textup{Pic}^0(X_{\bar{k}})$, which identifies with the group of degree $0$ divisors on $X_{\bar{k}}$ modulo linear equivalence, and similarly for $Y$. Under this identification,   $f_*$ sends a prime divisor $Q$ to $f(Q)$, whilst $f^*$ is the homomorphism that sends a prime divisor $Q$ to
\[ \sum_{P\in f^{-1}(Q)}e(P)  P,\]
where $e(P)$ is the ramification degree of $f$ at $P$.

The following lemma is standard, but we have been unable to locate a suitable reference. 

\begin{lemma} \label{lem:fstar_duality}
We have  $f^*=\lambda_X^{-1}\circ f_*^\vee \circ \lambda_Y$, where $f_*^\vee$ is the dual of $f_*$.
\end{lemma}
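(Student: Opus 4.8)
The plan is to verify the identity $f^* = \lambda_X^{-1} \circ f_*^\vee \circ \lambda_Y$ by reducing it to a statement about the Weil pairing, or equivalently about the intersection/cup-product pairing on (co)homology, where the adjointness of pushforward and pullback is classical. Concretely, I would first recall that for a geometrically connected curve $X$, the canonical principal polarisation $\lambda_X \colon \Jac_X \xrightarrow{\sim} \Jac_X^\vee = \Pic^0_{\Jac_X}$ is characterised by the fact that, on $\ell$-adic Tate modules, the induced isomorphism $T_\ell \Jac_X \xrightarrow{\sim} T_\ell \Jac_X^\vee \cong \Hom(T_\ell \Jac_X, \Z_\ell(1))$ is (up to sign) the one coming from the cup product pairing $H^1(X_{\bar k}, \Z_\ell) \times H^1(X_{\bar k}, \Z_\ell) \to H^2(X_{\bar k}, \Z_\ell(1)) \cong \Z_\ell$, combined with the Weil pairing identification $T_\ell \Jac_X \cong H_1(X_{\bar k}, \Z_\ell) \cong H^1(X_{\bar k}, \Z_\ell(1))$. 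Having fixed this dictionary, the claimed identity becomes the assertion that the map induced by $f^*$ on $H^1$ is the adjoint, with respect to the cup product, of the map induced by $f_*$ on $H^1$.

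The key steps, in order, would be: (1) translate everything into $\ell$-adic cohomology via the standard comparison $T_\ell \Jac_X \cong H^1(X_{\bar k}, \Z_\ell(1))$, under which $f^* \colon \Jac_Y \to \Jac_X$ induces the usual pullback $f^* \colon H^1(Y_{\bar k}) \to H^1(X_{\bar k})$ and $f_* \colon \Jac_X \to \Jac_Y$ induces the Gysin/trace map $f_* \colon H^1(X_{\bar k}) \to H^1(Y_{\bar k})$; (2) recall that under this translation the dual homomorphism $f_*^\vee$ corresponds to the transpose of $f_*$ with respect to the cup-product pairings, and each $\lambda$ corresponds to the cup-product pairing itself; (3) invoke the projection formula $f_*(f^* a \cup b) = a \cup f_* b$ in $H^*(X_{\bar k})$, $H^*(Y_{\bar k})$, together with the fact that $f_* \colon H^2(X_{\bar k}, \Z_\ell(1)) \to H^2(Y_{\bar k}, \Z_\ell(1))$ is the identity $\Z_\ell \to \Z_\ell$ (both being canonically $\Z_\ell$ via the trace/degree map, and $f_*$ is compatible with these), to conclude that $f^*$ and $f_*$ are indeed adjoint; (4) since a homomorphism of abelian varieties over $k$ is determined by its action on $T_\ell$ for a single $\ell$, deduce the identity of homomorphisms $\Jac_Y \to \Jac_X$. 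Alternatively, and perhaps more cleanly, I would phrase steps (1)--(3) in terms of the explicit divisorial description already recalled before the lemma: $f_*$ sends a prime divisor $Q$ to $f(Q)$ and $f^*$ sends $Q$ to $\sum_{P \in f^{-1}(Q)} e(P) P$, and the compatibility of these with the polarisations reduces to the classical fact (e.g.\ via the seesaw theorem applied to Poincar\'e bundles, or via \cite{MR861976} Proposition III.6.10-style arguments) that pullback of line bundles and the norm/pushforward of divisors are adjoint under the Poincar\'e duality pairing.

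I expect the main obstacle to be purely expository rather than mathematical: pinning down the precise sign and normalisation conventions so that the identity holds on the nose (with no spurious sign), since the canonical polarisation, the Weil pairing, and the cup product each carry a sign convention, and one must check these are mutually consistent. A secondary technical point is making sure the projection formula and the compatibility of trace maps with the degree-$2$ cohomology identification are available in the generality needed (finite surjective, possibly ramified, morphism of smooth proper connected curves over a field of characteristic $0$) — but in characteristic $0$ with $X, Y$ smooth and proper this is entirely standard. Once the conventions are fixed, the proof is a one-line consequence of the projection formula, so I would aim to state the convention explicitly and then give that one line.
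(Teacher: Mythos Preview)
Your approach is correct but takes a genuinely different route from the paper's. The paper's proof is short and geometric: it fixes a point $P\in X(\bar k)$, observes from the divisorial description of $f_*$ that the Abel--Jacobi maps satisfy $\phi^{f(P)}\circ f = f_*\circ \phi^P$ as morphisms $X\to \Jac_Y$, and then applies the functor $\Pic^0_{-/k}$ to this equality. The point is that, by \cite[Lemma 6.9]{MR861976}, the pullback $(\phi^P)^*:\Jac_X^\vee\to \Jac_X$ is exactly $-\lambda_X^{-1}$; so the functored equality reads $f^*\circ(-\lambda_Y^{-1}) = (-\lambda_X^{-1})\circ f_*^\vee$, which is the claim. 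Your argument instead passes to $\ell$-adic cohomology, identifies $\lambda_X$ with the cup-product pairing on $H^1$, and then uses the projection formula to see that $f^*$ and $f_*$ are adjoint. Both arguments ultimately rest on the same underlying duality, but the paper's version packages all of the sign and normalisation issues you flag into the single citation of Milne's lemma, and never needs faithfulness of $T_\ell$ on homomorphisms; your version is more cohomological and would require separately pinning down (i) that $f^*,f_*$ on Jacobians really do induce the cohomological $f^*,f_*$ on $H^1$, and (ii) that the theta polarisation matches the cup product with the correct sign. These are standard, so your approach is fine, but the paper's is shorter and avoids that bookkeeping.
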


\begin{proof}
It suffices to check this identity after base-change to $\bar{k}$. 
Fix any point $P$ in $X(\bar{k})$. Then $P$ defines a morphism (the Abel--Jacobi map) $\phi^P:X\rightarrow \Jac_{X}$ induced by the map on divisors 
$(Q) \longmapsto (Q)-(P) .$
Similarly, the  point $f(P) \in Y(\bar{k})$ induces a morphism $\phi^{f(P)}:Y\rightarrow \Jac_{Y}$. From the explicit description of $f_*$ in terms of divisors, one checks that  
\begin{equation} \label{ab_jac_com}
\phi^{f(P)}\circ f=f_*\circ \phi^P.
\end{equation}
 After applying the functor $\textup{Pic}^0_{-/k}$ to this equality, the result  follows from  \cite[Lemma 6.9]{MR861976}.
\end{proof}

\begin{remark}
Using the principal polarisations $\lambda_X$ and $\lambda_Y$ to identify the Jacobians of $X$ and~$Y$ with their duals, we can express Lemma \ref{lem:fstar_duality} more succinctly as saying that $(f_*)^\vee=f^*$.
\end{remark}

\subsection{The general case} \label{sec:quasi_nice_jac}
Let $X/k$ be a curve satisfying Convention \ref{convention}. By definition, we take the Jacobian $\Jac_{X}$ of  $X$ to be the identity component of the relative Picard functor of $X/k$. Let $k(X)$ be as in Definition \ref{function_algebra_defi}.    In what follows, we write $\overline{X}=X_{\bar{k}}$ for the base-change of $X$ to $\bar{k}$, and write $\bar{k}(X)$ as shorthand for $k(X)\otimes_k\bar{k}=\bar{k}(\overline{X})$. We have the following basic lemma.
 
 \begin{lemma} \label{is_an_ab_var}
 The Jacobian of $X$ is a principally polarised abelian variety over $k$. 
 \end{lemma}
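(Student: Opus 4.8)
The plan is to reduce the statement to the geometrically connected case (Section \ref{sec:curves_and_jacobians_background}), where it is classical, by exploiting the decomposition \eqref{decomp_of_quasi_nice} of $X$ into connected components together with Remark \ref{nice_vs_nice_and_geom_irred}. First I would write $X=\bigsqcup_{x\in\textup{Spec }k(X)}X_x$, where each $X_x$ is a connected $k$-curve with function field $k_x:=k(X)(x)$, a finite extension of $k$. By Remark \ref{nice_vs_nice_and_geom_irred}, each $X_x$ is, as a $k$-scheme, obtained from a geometrically connected curve $X_x'/k_x$ by composing the structure map with $\textup{Spec }k_x\to\textup{Spec }k$. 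Since the relative Picard functor takes disjoint unions to products and is compatible with restriction of scalars along $\textup{Spec }k_x\to\textup{Spec }k$, the identity component satisfies
\[
\Jac_X\;\cong\;\prod_{x\in\textup{Spec }k(X)}\textup{Res}_{k_x/k}\Jac_{X_x'}.
\]
This already gives statement (2) of Proposition \ref{Prop:Curves-Algebras} as a by-product, but for the present lemma it reduces everything to two facts: (a) each $\Jac_{X_x'}$ is a principally polarised abelian variety over $k_x$ (classical, cited in \S\ref{sec:curves_and_jacobians_background}), and (b) Weil restriction along a finite separable extension, and finite products, preserve the class of (principally polarised) abelian varieties.

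For (b), I would recall that $\textup{Res}_{k_x/k}$ of an abelian variety over $k_x$ is an abelian variety over $k$ of dimension $[k_x:k]\cdot\dim$: it is smooth, proper and geometrically connected because these properties are preserved by Weil restriction along a finite étale extension, and it is a group scheme; a finite product of abelian varieties is again an abelian variety. For the polarisation, Weil restriction is a functor, so it carries the principal polarisation $\lambda_{X_x'}\colon \Jac_{X_x'}\xrightarrow{\sim}\Jac_{X_x'}^\vee$ to an isomorphism $\textup{Res}_{k_x/k}\Jac_{X_x'}\xrightarrow{\sim}\textup{Res}_{k_x/k}(\Jac_{X_x'}^\vee)$; it remains to identify the target with $(\textup{Res}_{k_x/k}\Jac_{X_x'})^\vee$, which is the standard fact that Weil restriction commutes with taking dual abelian varieties (equivalently, it has an explicit compatibility with the Poincaré bundle — see e.g.\ the discussion of Weil restriction and duality in \cite{MR861976}), and to check that the resulting isomorphism is a polarisation, i.e.\ symmetric and positive. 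Symmetry is functorial; positivity can be checked after base change to $\bar k$, where $\textup{Res}_{k_x/k}\Jac_{X_x'}$ becomes a product of conjugates of $\Jac_{X_x',\bar k}$ and the polarisation becomes the product of the (positive) base-changed polarisations. Finally the product polarisation on $\prod_x\textup{Res}_{k_x/k}\Jac_{X_x'}$ is principal, and transporting it along the displayed isomorphism equips $\Jac_X$ with a canonical principal polarisation $\lambda_X$.

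The main obstacle is the bookkeeping around duality of Weil restriction and the verification that the transported form is genuinely a polarisation rather than merely an isomorphism to the dual; positivity in particular is not functorial and must be checked after base change to $\bar k$, which is why I would phrase the argument so that all the delicate points (geometric connectedness of the Weil restriction, positivity of the polarisation) are reduced to statements over $\bar k$, where $X_{\bar k}$ is simply a disjoint union of geometrically connected curves and everything decomposes into the classical geometrically connected situation. One should also take a moment to confirm that the identity component of the relative Picard functor of a disjoint union is the product of the identity components of the pieces — this is immediate since $\textup{Pic}_{\bigsqcup_i X_i/k}=\prod_i\textup{Pic}_{X_i/k}$ and the identity component of a product is the product of identity components — so that the decomposition of $\Jac_X$ above is legitimate.
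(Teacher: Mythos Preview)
Your approach is correct but takes a different route from the paper's. You work over $k$ throughout, invoking the Weil-restriction decomposition $\Jac_X \cong \prod_x \textup{Res}_{k_x/k}\Jac_{X_x'}$ (this is exactly Lemma~\ref{lem:curve_Weil restriction}, proved later in the paper) and then transporting the classical principal polarisations through Weil restriction and products. The paper instead first cites representability of $\textup{Pic}^0_{X/k}$ over $k$ (Murre, Kleiman), then base-changes to $\bar k$, where $(\Jac_X)_{\bar k}$ is simply a product of Jacobians of geometrically connected curves; it forms the product polarisation $\lambda_X$ over $\bar k$ and descends it by a direct argument: for any tuple of base points $P$, the map $\phi^{(P)*}\colon \Jac_X^\vee(\bar k)\to \Jac_X(\bar k)$ coincides with $-\lambda_X^{-1}$ independently of $P$, so applying this to Galois conjugates of $P$ shows $\lambda_X$ is $G_k$-invariant. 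Your route avoids this descent trick but front-loads two facts you only assert: (i) that $\textup{Pic}^0$ of a connected-but-not-geometrically-connected $k$-curve is the Weil restriction of the Jacobian of the associated geometrically connected curve (the paper supplies the argument, via exactness of $g_*$ on fppf sheaves for finite \'etale $g$), and (ii) that Weil restriction along a finite separable extension commutes with taking dual abelian varieties and carries polarisations to polarisations (true, but not in the reference \cite{MR861976} you cite). The paper's path has the side benefit that the Abel--Jacobi map $\phi^{(P)}$ introduced in the proof is reused later, e.g.\ in Proposition~\ref{iso_of_diffs_vs_jac_prop}; indeed the paper explicitly remarks that the Weil-restriction description gives an alternative proof of the lemma.

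One notational slip: you write ``function field $k_x:=k(X)(x)$, a finite extension of $k$'', but $k(X)(x)$ is the function field of $X_x$, which has transcendence degree $1$ over $k$. What you need (and use) is $k_x=\bar k\cap k(X)(x)$, the constant field, as in Remark~\ref{nice_vs_nice_and_geom_irred} and the setup preceding Lemma~\ref{lem:curve_Weil restriction}.
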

 
 \begin{proof}
Since $X$ is proper, by  \cite[Theorem 2]{MR206011} and \cite[Lemma 5.1]{MR2223410} (cf. also \cite[Corollary 4.18.3]{MR2223410}),  $\textup{Pic}^0_{X/k}$ is represented by a finite-type commutative group scheme over $k$. Now decompose $\overline{X}$ as in \eqref{decomp_of_quasi_nice}, noting that, as we are now working over an algebraically closed field, each $\overline{X}_x$ is a geometrically connected curve.  Then
\begin{equation} \label{jac_kbar}
(\Jac_{X})_{\bar{k}}\stackrel{\textup{\cite[Prop. 5.3]{MR2223410}} }{=}\textup{Pic}^0_{\overline{X}/\bar{k}}=\prod_{x\in \textup{ Spec }\bar{k}(X)}\textup{Jac}_{\overline{X}_x} 
\end{equation}
 is an abelian variety. Thus the same is true of $\Jac_{X}$.  
 
With the curves $\overline{X}_x$ as above, equip each $\textup{Jac}_{\overline{X}_x} $ with its canonical principal polarisation, and equip  $(\Jac_{X})_{\bar{k}}$ with the corresponding product polarisation, which we denote $\lambda_X$. We wish to show that $\lambda_X$ is defined over $k$. To see this, let $P=(P_x)_{x\in \textup{ Spec } \bar{k}(X)}$ be a tuple of $\bar{k}$-points, with $P_x \in \overline{X}_x(\bar{k})$ for each $x$. Let
 \[\phi^{(P)}:\overline{X}=\bigsqcup_{x\in \textup{ Spec } \bar{k}(X)}\overline{X}_x\longrightarrow \prod_{x\in \textup{Spec }\bar{k}(X)}\textup{Jac}_{\overline{X}_x}= (\Jac_{X})_{\bar{k}} \]
 denote the morphism induced by the individual Abel--Jacobi maps associated to the $P_x$. Pulling back line bundles under $\phi^{(P)}$  gives a homomorphism
$\textup{Jac}_X^\vee(\bar{k})\longrightarrow \textup{Jac}_X(\bar{k}).$
  From  \cite[Lemma 6.9]{MR861976} we see that this agrees with the map on $\bar{k}$-points induced by  $-\lambda_X^{-1}$. In particular, this map is independent of the choice of $P$. Applying this observation to the corresponding maps constructed from the Galois conjugates of $P$, we  conclude that $-\lambda_X^{-1}$, hence also $\lambda_X$,  is defined over $k$.  
 \end{proof}
 
 \subsubsection{Jacobians as products of Weil restrictions} We  record the following description of the Jacobian as a product of Weil restrictions of Jacobians of geometrically connected curves. We remark that this gives an alternative approach to Lemma \ref{is_an_ab_var}.
 
 \begin{notation}
 Let $k'/k$ be a finite field extension, and let $V$ be a quasi-projective $k'$-variety. We denote by $\textup{Res}_{k'/k}V$ the   \textit{Weil restriction}  of $V$ from $k'$ to $k$. By   \cite[Section 7.6]{MR1045822} it is a  $k$-variety which represents the functor from $k$-schemes to sets given by 
 $T \mapsto V(T\times_k k').$ Similarly,  for a functor $F$ from $k'$-schemes to sets, we denote by $\textup{Res}_{k'/k}F$ the functor  from $k$-schemes to sets given by 
 $\textup{Res}_{k'/k}F(T)=  F(T\times_k k').$
 \end{notation}
 
 As in \eqref{decomp_of_quasi_nice}, write   
 \[
X=\bigsqcup_{x\in \textup{ Spec } k(X)}X_x \]
 as a disjoint union of connected $k$-curves. For $x\in \textup{ Spec } k(X)$, write $k(X_x)$ for the function field of $X_x$, and write $k_x=\bar{k}\cap k(X_x)$. Per Remark \ref{nice_vs_nice_and_geom_irred}, $k(X_x)$ corresponds to a geometrically connected $k_x$-curve $X_x'$, and $X_x$ is obtained from $X_x'$ by forgetting the $k_x$-structure.
 
 \begin{lemma} \label{lem:curve_Weil restriction}
As abelian varieties over $k$, we have
 \[\textup{Jac}_X\cong \prod_{x\in  \textup{ Spec } k(X)}\textup{Res}_{k_x/k}\textup{Jac}_{X'_x}.\] 
 \end{lemma}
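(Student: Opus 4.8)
The plan is to reduce the statement to the already-established computation of $(\Jac_X)_{\bar k}$ in the proof of Lemma \ref{is_an_ab_var}, together with standard properties of Weil restriction and Galois descent. First I would recall that, by Remark \ref{nice_vs_nice_and_geom_irred}, for each $x\in\operatorname{Spec}k(X)$ the connected $k$-curve $X_x$ is the geometrically connected $k_x$-curve $X_x'$ with its $k_x$-structure forgotten, so that one has a natural isomorphism of abelian varieties over $k_x$, namely $(\Jac_X)|_{X_x}$ agrees with $\Jac_{X_x'}$ once one remembers that $\operatorname{Pic}^0$ is insensitive to the choice of base field (this uses \cite[Prop.\ 5.3]{MR2223410} again, as in \eqref{jac_kbar}). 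Taking the disjoint-union decomposition $X=\bigsqcup_x X_x$ and using that the relative Picard functor of a disjoint union is the product of the relative Picard functors, one gets
\[
\operatorname{Pic}^0_{X/k}\;\cong\;\prod_{x\in\operatorname{Spec}k(X)}\operatorname{Res}_{k_x/k}\operatorname{Pic}^0_{X_x'/k_x},
\]
the point being that for a $k$-scheme $T$, a line bundle on $X_x\times_k T = X_x'\times_{k_x}(T\times_k k_x)$ of relative degree $0$ is precisely a $(T\times_k k_x)$-point of $\operatorname{Pic}^0_{X_x'/k_x}$, which is the defining property of the Weil restriction.

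Next I would pass to identity components. Since Weil restriction along a finite (separable, as $\operatorname{char}k=0$) extension commutes with the formation of identity components of group schemes of finite type — $\operatorname{Res}_{k_x/k}$ is an exact functor on commutative group schemes and sends connected schemes to connected schemes — the identity component of the right-hand side is $\prod_x\operatorname{Res}_{k_x/k}\big(\operatorname{Pic}^0_{X_x'/k_x}\big)^{\!0}=\prod_x\operatorname{Res}_{k_x/k}\Jac_{X_x'}$. Combined with the displayed isomorphism above and the definition $\Jac_X=(\operatorname{Pic}^0_{X/k})^0$, this yields the claimed isomorphism
\[
\Jac_X\;\cong\;\prod_{x\in\operatorname{Spec}k(X)}\operatorname{Res}_{k_x/k}\Jac_{X_x'}
\]
as group schemes over $k$; that the right-hand side is an abelian variety is automatic (Weil restrictions of abelian varieties along finite separable extensions are abelian varieties, cf.\ \cite[Section 7.6]{MR1045822}), which is the promised alternative proof of Lemma \ref{is_an_ab_var}.

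The main obstacle, and the place where I would be most careful, is the bookkeeping around base fields: one must check that the relative Picard functor genuinely behaves well under the disjoint-union decomposition over $k$ (as opposed to only over $\bar k$, which is what \eqref{jac_kbar} records) and that the "forget the $k_x$-structure" operation interacts with $\operatorname{Pic}^0$ exactly via Weil restriction. Concretely, the identity $\operatorname{Pic}^0_{X_x/k}\cong\operatorname{Res}_{k_x/k}\operatorname{Pic}^0_{X_x'/k_x}$ should be justified by unwinding the functor of points as above, together with representability (from \cite[Theorem 2]{MR206011}, \cite[Lemma 5.1]{MR2223410}) to know both sides are schemes; the relative-degree-$0$ condition matches up because degree is computed fibrewise and the geometric fibres of $X_x/k$ and $X_x'/k_x$ coincide. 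Once this functorial dictionary is in place, everything else is formal. If one prefers, the whole lemma can alternatively be checked after base-change to $\bar k$ using \eqref{jac_kbar} and the standard formula $(\operatorname{Res}_{k_x/k}A)_{\bar k}\cong\prod_{\sigma:k_x\hookrightarrow\bar k}A^\sigma$, reducing it to a descent statement for the Galois action on $\operatorname{Spec}\bar k(X)$; I would mention this as a remark but carry out the direct functorial argument as the main proof.
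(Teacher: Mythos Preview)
Your proposal is correct and follows essentially the same route as the paper: reduce to a single connected component $X_x$, identify $\operatorname{Pic}_{X_x/k}$ with $\operatorname{Res}_{k_x/k}\operatorname{Pic}_{X_x'/k_x}$, and then pass to identity components. The only difference is in how that middle identification is justified: you unwind the functor of points via the base-change identity $X_x\times_k T \cong X_x'\times_{k_x}(T\times_k k_x)$, whereas the paper phrases the same fact sheaf-theoretically, writing both Picard functors as higher direct images ($g_*R^1f_*\mathbb{G}_m$ versus $R^1(g\circ f)_*\mathbb{G}_m$) and invoking exactness of $g_*$ along the finite \'etale morphism $\operatorname{Spec}k_x\to\operatorname{Spec}k$. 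Your version is arguably more elementary; the paper's packaging has the advantage of sidestepping the sheafification issue you flag (since the identity is proved directly at the level of fppf sheaves).
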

 
 \begin{proof}
The decomposition \eqref{decomp_of_quasi_nice} gives  $\textup{Jac}_X\cong \prod_{x\in \textup{ Spec } k(X)}\textup{Jac}_{X_x}$. Fixing $x\in \textup{Spec }k(X)$, we wish to show that $\textup{Jac}_{X_x}\cong \textup{Res}_{k_x/k}\textup{Jac}_{X_x'}$. Since $\textup{Res}_{k_x/k}\textup{Jac}_{X'_x}$ is connected, it  suffices to show that 
$\textup{Res}_{k_x/k}\textup{Pic}_{X_x'/k_x}\cong \textup{Pic}_{X_x/k}.$
Denote by $g:\textup{Spec }k_x\rightarrow \textup{Spec }k$ the morphism corresponding to the inclusion of $k$ into $k_x$, and denote by $f:X_x'\rightarrow \textup{Spec }k_x$ the structure morphism. Then as fppf sheaves we have 
\[\textup{Res}_{k_x/k}\textup{Pic}_{X_x'/k_x}=g_*R^1 f_* \mathbb{G}_{m}\quad\textup{ and }\quad \textup{Pic}_{X_x/k}=R^1 (g\circ f)_*\mathbb{G}_{m },\]
where $\mathbb{G}_m$ denotes the multiplicative group over $X_x'$. As $k_x/k$ is a finite separable extension, $g_*$ is exact on fppf sheaves (exactness can be checked after pullback to $k_x$, and $\textup{Spec }k_x~\times_{\textup{Spec }k}~\textup{Spec }k_x$ is a finite disjoint union of copies of $\textup{Spec }k_x$). Thus the functors $R^1(g_*f_*)$ and $g_*R^1(f_*)$ agree, giving the result.
 \end{proof}
 
  \subsubsection{Induced maps between Jacobians} \label{induced_map_jac_q_n}
 It what follows, given a $k$-curve $X$, we will always equip $\textup{Jac}_X$ with the principal polarisation $\lambda_X$ constructed in the proof of Lemma \ref{is_an_ab_var}. 

 \begin{definition} \label{quasi_nice_f_star}
 Let $X$ and $Y$ be curves over $k$, and let $f:X\rightarrow Y$ be a finite surjective morphism. We denote by $f^*:\textup{Jac}_Y\rightarrow \textup{Jac}_X$ the map induced by pullback of line bundles. For the purposes of this paper, we then \textit{define} $f_*:\textup{Jac}_X\rightarrow \textup{Jac}_Y$ to be the dual of $f^*$, i.e. we set 
 \[f_*=\lambda_Y^{-1}\circ (f^*)^\vee\circ \lambda_X.\]
 By Lemma \ref{lem:fstar_duality} this agrees with the usual notion for geometrically connected curves. 
 \end{definition} 

We will want to work explicitly with the maps  induced on $\bar{k}$-points by $f^*$ and $f_*$. For this we introduce the following notation.
 
 \begin{notation} \label{notat:points_on_Jacobians}
Decompose $\overline{X}=X\times_k\bar{k}$ as in  \eqref{decomp_of_quasi_nice}. Set $\textup{Div}(\overline{X})=\oplus_{P\in X(\bar{k})}\mathbb{Z}\cdot P,$ and denote by $\textup{Div}^0(\overline{X})$ the subgroup of $\textup{Div}(\overline{X})$ consisting of elements whose restriction to each connected component of $\overline{X}$ has degree $0$. Given $\eta\in \bar{k}(X)^\times$, we can view $\eta$ as a tuple $(\eta_x)_{x\in \textup{ Spec }\bar{k}(X)}$, where each $\eta_x$ is in $\bar{k}(\overline{X}_x)^\times$.  We then define 
 \[\textup{div}(\eta)=\sum_{x\in \textup{ Spec }\bar{k}(X)}\textup{div}(\eta_x)\in \textup{Div}^0(\overline{X}). \]
 (Equivalently, viewing $\eta$ as an element  of $\Gamma(\overline{X},\mathcal{K}_{\overline{X}})$, $\textup{div}(\eta)$ corresponds to the principal Cartier divisor associated to $\eta$.)
Writing $\textup{Prin}(\overline{X})$ for the subgroup of $\textup{Div}^0(\overline{X})$ of elements of the form $\textup{div}(\eta)$ for $\eta \in\bar{k}(X)^\times$, we see from \eqref{jac_kbar} that we have a canonical identification
\begin{equation*} \label{eq:jac_k_bar_points}
\textup{Jac}_X(\bar{k})=\textup{Div}^0(\overline{X})/\textup{Prin}(\overline{X}).
\end{equation*}
Given $Z\in \textup{Div}^0(\overline{X})$, we write $[Z]$ for its class in $\textup{Jac}_X(\bar{k})$. 
 \end{notation}
 
Now let $f:X\rightarrow Y$ be a finite surjective morphism.  As in the case of geometrically connected curves, we have homomorphisms
\begin{equation} \label{f_star_in_quasi_divs}
f^*:\textup{Div}(\overline{Y})\longrightarrow \textup{Div}(\overline{X})\quad \textup{ and }\quad f_*:\textup{Div}(\overline{X})\longrightarrow \textup{Div}(\overline{Y})
\end{equation}
determined by setting $f_*(P)=f(P)$ for all $P\in X(\bar{k})$, and setting
\[f^*(Q)=\sum_{P\in f^{-1}(Q)}e(P)P \] for all $Q\in Y(\bar{k})$. (Again, $e(P)$ denotes the ramification index of $f$ at $P$, i.e. the ramification index of $\mathcal{O}_{\overline{Y},f(P)}$ over $\mathcal{O}_{\overline{X},P}$.)

 \begin{lemma} \label{agree_with_obvious_maps_f_star}
 For all $Z\in \textup{Div}^0(\overline{X})$ and $Z'\in \textup{Div}^0(\overline{Y})$, we have 
 \[f^*[Z']=[f^*Z']\quad  \textup{ and }\quad f_*[Z]=[f_*Z].\]
 \end{lemma}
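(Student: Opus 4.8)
The plan is to reduce to the geometrically connected case, where the statement is classical. First I would recall that, by construction, $f^*$ on Jacobians is pullback of line bundles, and $f_*$ is \emph{defined} as its dual via $f_* = \lambda_Y^{-1}\circ (f^*)^\vee\circ \lambda_X$; so the content of the lemma is entirely about matching these maps on $\bar k$-points with the naive divisor-level maps of \eqref{f_star_in_quasi_divs}. Since both sides of each asserted equality are homomorphisms $\textup{Jac}_X(\bar k)\to\textup{Jac}_Y(\bar k)$ (or the other way), and the identifications of Notation \ref{notat:points_on_Jacobians} are compatible with base change, it suffices to work after base-change to $\bar k$; thus I may assume $k=\bar k$ and, decomposing $X=\bigsqcup_x X_x$ and $Y=\bigsqcup_y Y_y$ as in \eqref{decomp_of_quasi_nice}, everything in sight is a finite product of geometrically connected curves.

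Next I would unwind how a finite surjective morphism $f:X\to Y$ behaves with respect to these decompositions: $f$ restricts to a finite surjective morphism $f_x:X_x\to Y_{y(x)}$ for a well-defined index $y(x)$, and the surjectivity of $f$ forces the assignment $x\mapsto y(x)$ to be surjective onto $\textup{Spec}\,\bar k(Y)$. Under the product identifications
\[
\textup{Jac}_X(\bar k)=\prod_x \textup{Jac}_{X_x}(\bar k),\qquad \textup{Jac}_Y(\bar k)=\prod_y \textup{Jac}_{Y_y}(\bar k),
\]
the map $f^*$ (resp.\ its dual, i.e.\ our $f_*$) decomposes componentwise: on the $y$-component, $f^*$ sends $\textup{Jac}_{Y_y}$ diagonally into $\prod_{x: y(x)=y}\textup{Jac}_{X_x}$ via the $f_x^*$, while $f_*$ sums the $(f_x)_*$ over $x$ with $y(x)=y$. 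This is exactly the componentwise shape of the divisor-level maps $f^*(Q)=\sum_{P\in f^{-1}(Q)}e(P)P$ and $f_*(P)=f(P)$ once one observes that a prime divisor $Q$ on $\overline Y$ lives on a single component $Y_y$ and its preimages lie on the various $X_x$ with $y(x)=y$.

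Having made this reduction, each component is the statement for geometrically connected curves: that pullback of line bundles on $\textup{Jac}$ induces $Q\mapsto\sum_{P\in f_x^{-1}(Q)}e(P)P$ on degree-zero divisor classes, and that its dual (equivalently, by Lemma \ref{lem:fstar_duality}, the pushforward $Q\mapsto f_x(Q)$) induces the naive pushforward on divisor classes. These are recorded in \S\ref{sec:curves_and_jacobians_background} (the explicit descriptions of $f^*$ and $f_*$ together with Lemma \ref{lem:fstar_duality}). Combining the componentwise equalities gives $f^*[Z']=[f^*Z']$ and $f_*[Z]=[f_*Z]$ in general. The only mild subtlety — and the step I expect to require the most care — is checking that the product decompositions of $\textup{Jac}_X(\bar k)$ and $\textup{Jac}_Y(\bar k)$ are genuinely compatible with the maps $f^*$ and $f_*$ as abelian-variety homomorphisms (not merely on points), i.e.\ that forming $(f^*)^\vee$ and conjugating by the product polarisations $\lambda_X,\lambda_Y$ is compatible with the factorisation; but this follows from functoriality of duality for products of abelian varieties and the fact that $\lambda_X$, $\lambda_Y$ were defined in the proof of Lemma \ref{is_an_ab_var} precisely as the product polarisations. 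Everything else is bookkeeping with divisors.
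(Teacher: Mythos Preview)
Your proposal is correct and follows essentially the same approach as the paper: base-change to $\bar{k}$, decompose both curves into connected components, express $f^*$ as the block matrix $(\phi_{y,x})$ with $\phi_{y,x}=f_x^*$ when $f(x)=y$ and $0$ otherwise, then dualise with respect to the product polarisations (using Lemma \ref{lem:fstar_duality} componentwise) to obtain the analogous description of $f_*$, reducing both claims to the geometrically connected case. The subtlety you flag---that dualising is compatible with the product factorisation---is exactly the point the paper handles by noting that $\lambda_X$ and $\lambda_Y$ are product polarisations and invoking Lemma \ref{lem:fstar_duality}.
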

 
\begin{proof}
We can formally reduce to the case where $X$ and $Y$ are geometrically connected as follows. Write $\overline{X}$ and $\overline{Y}$ as disjoint unions of connected $\bar{k}$-curves as in  \eqref{decomp_of_quasi_nice}. 
For $x\in \textup{ Spec }\bar{k}(X)$, write $f_x$ for the finite surjective morphism $f_x:\overline{X}_x\rightarrow \overline{Y}_{f(x)}$ induced by $f$. Now (the base-change to $\bar{k}$  of) $f^*:\textup{Jac}_Y\rightarrow \textup{Jac}_X$ gives a homomorphism
\[\prod_{y\in \textup{ Spec }\bar{k}(Y)}\textup{Jac}_{\overline{Y}_y}\longrightarrow \prod_{x\in \textup{ Spec }\bar{k}(X)}\textup{Jac}_{\overline{X}_x}.\]
From the definition of $f^*$ we see that this corresponds to the matrix of homomorphisms $(\phi_{y,x})$, where for $y\in \textup{ Spec }\bar{k}(Y)$ and $x \in \textup{ Spec }\bar{k}(X)$, we have 
\[\phi_{y,x}=\begin{cases}f_x^*~~&~~f(x)=y,\\ 0~~&~~\textup{otherwise.} \end{cases}\]
Using this, the claim for $f^*$ reduces to the case of maps between geometrically connected curves. Dualising with respect to the product polarisation and applying Lemma \ref{lem:fstar_duality}, we see that   $f_*:\textup{Jac}_X\rightarrow \textup{Jac}_Y$ corresponds to the matrix $(\phi^\vee_{x,y})$ where, for $x \in \textup{ Spec }\bar{k}(X)$ and $y\in \textup{ Spec }\bar{k}(Y)$, we have
\[\phi^\vee_{x,y}=\begin{cases} (f_x)_*~~&~~f(x)=y,\\ 0~~&~~\textup{otherwise.}\end{cases} \]
The result for $f_*$ now similarly reduces to the geometrically connected case.  \end{proof}

\begin{remark} \label{g_and_dual_inverse_rmk}
If $g$ is a $k$-automorphism of $X$, then from the explicit description of $g^*$ and $g_*$ on $\bar{k}$-points afforded by Lemma \ref{agree_with_obvious_maps_f_star}, we see that $g^*=g_*^{-1}$.
\end{remark}

\begin{remark} \label{rmk:pull_back_div_commute_quasi_nice}
Given $f:X\rightarrow Y$ as above, denote by $f^*:\bar{k}(Y)\rightarrow \bar{k}(X)$ the induced map on global sections of sheaves of total quotient rings (over $\bar{k}$). Then given $\eta \in \bar{k}(Y)$, one has 
$f^*(\textup{div}(\eta))=\textup{div}(f^*\eta).$
To see this one can, for example, again reduce to the geometrically connected case. 
\end{remark}

\subsubsection{Comparison of differentials.}
Let $X$ be a $k$-curve, and write $\overline{X}=\bigsqcup_{x\in \textup{Spec }\bar{k}(X)}\overline{X}_x$ as in Section \ref{sec:quasi_nice_jac}. Recall from the proof of Proposition \ref{is_an_ab_var} that, for a tuple $P=(P_x)_x\in \prod_{x\in \textup{Spec }\bar{k}(X)}\overline{X}_x(\bar{k})$, we have a map $\phi^{(P)}:\overline{X}\rightarrow (\textup{Jac}_X)_{\bar{k}}$. This induces a homomorphism
\begin{equation} \label{differentials_pull_back_iso}
\phi^{(P)*}:\Omega^1((\textup{Jac}_X)_{\bar{k}})\longrightarrow \Omega^1(\overline{X}),
\end{equation}
where for a scheme $V$ over a field $F$, we denote by $\Omega^1(V)$ the global sections of the sheaf of differentials of $V$ over $F$.

\begin{proposition} \label{iso_of_diffs_vs_jac_prop}
The map \eqref{differentials_pull_back_iso} is an isomorphism independent of the choice of $P$. In particular, it is $\textup{Gal}(\bar{k}/k)$-equivariant and \eqref{differentials_pull_back_iso} descends to an isomorphism
\begin{equation} \label{diff_over_base_field}
\alpha_X:\Omega^1(\textup{Jac}_X)\stackrel{\sim}{\longrightarrow} \Omega^1(X).
\end{equation}
Given a finite surjective morphism $f:X\rightarrow Y$ between  $k$-curves, we have a commutative diagram 
\begin{equation*} \label{diag_com_diff}
\xymatrix{  \Omega^1(\textup{Jac}_Y)  \ar[d]_{\alpha_Y}\ar[r]^{(f_*)^*} &\Omega^1(\textup{Jac}_X)\ar[d]^{\alpha_X}  \\      
\Omega^1(Y)\ar[r]^{f^*}&\Omega^1(X).
}
\end{equation*}
\end{proposition}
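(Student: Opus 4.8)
The plan is to reduce everything to the classical statement for geometrically connected curves and then descend along $\bar k/k$ using the independence of the Abel--Jacobi map from its base point. First I would decompose $\overline X=\bigsqcup_{x\in\textup{Spec }\bar k(X)}\overline X_x$ as in \eqref{decomp_of_quasi_nice}, so that $(\textup{Jac}_X)_{\bar k}=\prod_x\textup{Jac}_{\overline X_x}$ by \eqref{jac_kbar}. Since global differentials send disjoint unions of schemes and products of abelian varieties to direct sums, both the source and target of \eqref{differentials_pull_back_iso} split as $\bigoplus_x$; and unwinding the construction of $\phi^{(P)}$ from the maps $\phi^{P_x}$ (on the component $\overline X_x$ it is $\phi^{P_x}$ into the factor $\textup{Jac}_{\overline X_x}$ and constant into the remaining factors, which therefore contribute nothing upon pulling back forms), the map \eqref{differentials_pull_back_iso} is identified with $\bigoplus_x\phi^{P_x*}$. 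This reduces the claim to the assertion that, for a geometrically connected curve $C$ over $\bar k$ and $P\in C(\bar k)$, the pullback $\phi^{P*}\colon\Omega^1(\textup{Jac}_C)\to\Omega^1(C)$ is an isomorphism independent of $P$.

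For this, that $\phi^{P*}$ is an isomorphism is classical --- it identifies the cotangent space of $\textup{Jac}_C=\textup{Pic}^0_C$ at the origin with $H^0(C,\Omega^1_C)$, both of dimension $g(C)$ --- and I would simply cite \cite{MR861976}. Independence of $P$ follows because any two Abel--Jacobi maps differ by a translation, $\phi^{P'}=t_a\circ\phi^{P}$ with $a=[(P)-(P')]\in\textup{Jac}_C(\bar k)$; since in characteristic $0$ every global $1$-form on an abelian variety is translation-invariant, $t_a^*$ acts as the identity on $\Omega^1(\textup{Jac}_C)$, so $\phi^{P'*}=\phi^{P*}\circ t_a^*=\phi^{P*}$. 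Combined with the first paragraph, this proves \eqref{differentials_pull_back_iso} is an isomorphism independent of the tuple $P$.

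To descend, note that $\Omega^1((\textup{Jac}_X)_{\bar k})$ and $\Omega^1(\overline X)$ are the base changes of $\Omega^1(\textup{Jac}_X)$ and $\Omega^1(X)$, with the induced semilinear $\textup{Gal}(\bar k/k)$-actions. For $\sigma\in\textup{Gal}(\bar k/k)$, conjugating the morphism $\phi^{(P)}$ by $\sigma$ yields $\phi^{(P^\sigma)}$, where $P^\sigma$ is the tuple of $\sigma$-conjugate base points (and $\sigma$ permutes the indexing set $\textup{Spec }\bar k(X)$); hence $\sigma\circ\phi^{(P)*}\circ\sigma^{-1}=\phi^{(P^\sigma)*}=\phi^{(P)*}$, the last step by the independence just established. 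Thus \eqref{differentials_pull_back_iso} is $\textup{Gal}(\bar k/k)$-equivariant, so it is the base change of a $k$-linear map $\alpha_X\colon\Omega^1(\textup{Jac}_X)\to\Omega^1(X)$, which is an isomorphism since it becomes one after $\otimes_k\bar k$. For the compatibility with a finite surjective $f\colon X\to Y$, I would check the square after base change to $\bar k$, one component of $\overline X$ at a time. Fix $\overline X_x$, let $\overline Y_y$ be the component it maps onto via $f_x\colon\overline X_x\to\overline Y_y$, and recall from the proof of Lemma \ref{agree_with_obvious_maps_f_star} that $f_*$ (as in Definition \ref{quasi_nice_f_star}) sends the factor $\textup{Jac}_{\overline X_x}$ into $\textup{Jac}_{\overline Y_y}$ via $(f_x)_*$ and to $0$ in the other factors. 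Since $\alpha_Y$ does not depend on base points, for the purpose of checking the square on $\overline X_x$ I may compute $\alpha_Y\otimes\bar k$ on the $\overline Y_y$-factor using the Abel--Jacobi map based at $f_x(P_x)$; then \eqref{ab_jac_com} for $f_x$ reads $\phi^{f_x(P_x)}\circ f_x=(f_x)_*\circ\phi^{P_x}$, and pulling back global forms along this equality is exactly the commutativity of the square on $\overline X_x$. As $x$ was arbitrary the square commutes over $\bar k$, hence over $k$.

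The step requiring the most care is the Galois descent: $\phi^{(P)}$ is in general not defined over $k$, so descending \eqref{differentials_pull_back_iso} relies entirely on base-point independence, and one must keep careful track of how $\sigma$ permutes the components $\overline X_x$ and twists the base points $P_x$ when verifying the semilinearity. The classical input for geometrically connected curves is standard and needs only an appropriate citation; the remaining bookkeeping with disjoint unions and products is routine.
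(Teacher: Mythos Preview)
Your proposal is correct and follows essentially the same approach as the paper: decompose $\overline{X}$ into its connected components, identify $\phi^{(P)*}$ with $\bigoplus_x\phi^{P_x*}$, invoke the classical result for geometrically connected curves (the paper cites \cite[Proposition 2.2]{MR861976} directly rather than spelling out translation-invariance), deduce Galois-equivariance from independence of $P$, and check the square componentwise via \eqref{ab_jac_com} and the matrix description of $f_*$ from Lemma~\ref{agree_with_obvious_maps_f_star}. Your write-up is slightly more explicit about the descent and the translation argument, but there is no substantive difference.
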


\begin{proof}
For $X$ geometrically connected, the claim that $\eqref{differentials_pull_back_iso}$ is an isomorphism independent of $P$  is \cite[Proposition 2.2]{MR861976}. For the general case, recall that $\phi^{(P)}$ is defined as the morphism
\[\overline{X}=\bigsqcup_{x\in \textup{Spec }\bar{k}(X)}\overline{X}_x\longrightarrow \prod_{x\in  \textup{Spec }\bar{k}(X)}\textup{Jac}_{\overline{X}_x}=(\textup{Jac}_X)_{\bar{k}}\]
induced from the individual Abel-Jacobi maps $\phi^{(P_x)}$. We then have isomorphisms
\[\Omega^1((\textup{Jac}_X)_{\bar{k}})\cong \prod_{x\in  \textup{Spec }\bar{k}(X)}\Omega^1(\textup{Jac}_{\overline{X}_x})\cong\prod_{x\in  \textup{Spec }\bar{k}(X)}\Omega^1(\overline{X}_x) \cong \Omega^1(\overline{X}),\]
where the leftmost isomorphism is induced by pulling back along the projection maps $\textup{Jac}_X\rightarrow\textup{Jac}_{\overline{X}_x}$, and  the rightmost isomorphism is induced by pulling back along the inclusion maps $\overline{X}_x\rightarrow \overline{X}$. The central isomorphism is the product of the maps $\phi^{(P_x)*}$, which are isomorphisms independent of $P_x$ by the geometrically connected case. It follows from the definition of $\phi^{(P)}$ that the composite map from left to right is equal to $\phi^{(P)*}$, showing that $\phi^{(P)*}$ is an isomorphism independent of $P$. Having shown independence of $P$ it follows that $\phi^{(P)*}$ is Galois equivariant, hence induces the sought isomorphism \eqref{diff_over_base_field}. 

In the geometrically connected case, the claimed commutativity of the diagrams is a consequence of  \eqref{ab_jac_com}. The general case again formally reduces to this, using the description of $f_*$ on the Jacobians of the geometric components given in the proof of  Lemma \ref{agree_with_obvious_maps_f_star}. 
\end{proof}

\begin{remark}
For each prime $\ell$, an identical argument gives a canonical isomorphism \[H^1_{\acute{e}t}((\textup{Jac}_X)_{\bar{k}},\mathbb{Z}_\ell)\cong H^1_{\acute{e}t}(\overline{X},\mathbb{Z}_\ell)\] on \'{e}tale cohomology, making the analogous diagram commute. Here the input in the geometrically connected case is \cite[Corollary 9.6]{MR861976}. 
\end{remark}

\section{Finite group actions on  curves}

In this section, we consider the action of a finite group $G$ on curves and their Jacobians. We retain the notation from the previous section. In particular, $k$ denotes a field of characteristic $0$ and Convention \ref{convention} is in place.

\subsection{Quotients by finite groups}  \label{quasi_quotient}
Let $X$ be a $k$-curve, and let $G$ be a finite subgroup of $\textup{Aut}_k(X)$. Since $X$ is projective, the quotient scheme $X/G$ exists, and may be obtained by glueing the schemes $\textup{Spec } \mathcal{O}_X(U)^G$ as $U$ ranges over $G$-stable affine open subschemes of $X$. Recall the definition of $\textup{Div}^0(\overline{X})$ from Notation \ref{notat:points_on_Jacobians}.

\begin{proposition} \label{pull_back_pushforward_prop_quasi_nice}
The quotient $X/G$ is again a curve in the sense of Convention \ref{convention}. Moreover, writing $\pi:X\rightarrow X/G$ for the quotient morphism we have,  both as maps between Jacobians  and as maps between $\textup{Div}^0(\overline{X})$ and $\textup{Div}^0(\overline{X/G})$,
\[\pi_*\circ \pi^*=|G| \quad \textup{ and }\quad \pi^*\circ\pi_*=\sum_{g\in G}g_*.\]
\end{proposition}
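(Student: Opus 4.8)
The strategy is to reduce everything to statements about divisors on $\overline{X}$ and $\overline{X/G}$, where all the maps have been given explicit descriptions in Notation \ref{notat:points_on_Jacobians} and Lemma \ref{agree_with_obvious_maps_f_star}. First I would check that $X/G$ satisfies Convention \ref{convention}: since $X$ is smooth, proper, and one-dimensional, and $G$ acts by $k$-automorphisms, the quotient $X/G$ (constructed by glueing $\textup{Spec }\mathcal{O}_X(U)^G$) is again proper of dimension one, and smoothness follows because we are in characteristic $0$, so ramification is tame and the quotient of a smooth curve by a finite group is smooth. No geometric connectedness is claimed, so this is all that is required. By Lemma \ref{agree_with_obvious_maps_f_star} it then suffices to verify the two identities as maps on $\textup{Div}^0(\overline{X})$ and $\textup{Div}^0(\overline{X/G})$, since the maps on Jacobians are induced from these.

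The identity $\pi_*\circ\pi^* = |G|$ is the more routine of the two. Take a prime divisor $Q\in (X/G)(\bar{k})$ lying below points of $\overline{X}$. Base-changing to $\bar{k}$ we may work component by component, and for each point $Q$ the fibre $\pi^{-1}(Q)$ is a single $G$-orbit (or rather $\textup{Stab}_G(\Gamma)$-orbit on the relevant component), so $\pi^*(Q) = \sum_{P\in\pi^{-1}(Q)}e(P)\,P$ with all $e(P)$ equal, say to $e$, and $\#\pi^{-1}(Q)\cdot e = |G|$ (more carefully, $e\cdot|{\rm orbit}| = $ the size of the group acting on that component, but summing $f_*$ over the whole fibre recovers $|G|\cdot Q$ once one accounts for the components permuted by $G$). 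Then $\pi_*\pi^*(Q) = \sum_{P\in\pi^{-1}(Q)}e(P)\,\pi(P) = \big(\sum_P e(P)\big)Q = |G|\cdot Q$. The cleanest way to phrase this is via the orbit-stabiliser count $\sum_{P\in\pi^{-1}(Q)}e(P) = |G|$, which holds because $\pi$ has degree $|G|$ and, in characteristic $0$, the sum of ramification indices over a fibre equals the degree.

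For $\pi^*\circ\pi_* = \sum_{g\in G}g_*$, I would again work on $\textup{Div}(\overline{X})$ with a prime divisor $P\in X(\bar{k})$. Then $\pi_*(P) = \pi(P) =: Q$, and $\pi^*(Q) = \sum_{P'\in\pi^{-1}(Q)}e(P')\,P'$. The fibre $\pi^{-1}(Q)$ is exactly the $G$-orbit $G\cdot P$, and for each $P'$ in this orbit, $e(P') = e(P) = \#\textup{Stab}_G(P)$ (in characteristic $0$ the ramification index at $P$ equals the order of the inertia/decomposition group, which here is the stabiliser). Hence $\pi^*\pi_*(P) = \#\textup{Stab}_G(P)\cdot\sum_{P'\in G\cdot P}P' = \sum_{g\in G} g\cdot P = \sum_{g\in G}g_*(P)$, where the middle equality is the standard fact that summing over a $G$-orbit with multiplicity $\#\textup{Stab}$ is the same as summing $g\cdot P$ over all $g\in G$. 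Extending $\mathbb{Z}$-linearly and restricting to degree-$0$ divisors gives the identity on $\textup{Div}^0$, hence on Jacobians; and the identity as self-maps of $\textup{Div}^0(\overline{X/G})$ follows symmetrically (or by noting both sides are the composite in the other order applied after $\pi^*$).

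\textbf{Main obstacle.} The only genuinely delicate point is the ramification bookkeeping when $G$ permutes the connected components of $\overline{X}$ nontrivially: one must be careful that ``$\pi^{-1}(Q)$ is a single $G$-orbit'' is the statement about the $G$-action on all of $\overline{X}$, not on a single component, and that the ramification index $e(P)$ is computed for $\pi|_{\overline{X}}$ and equals the order of $\textup{Stab}_G(P)$ rather than the order of the stabiliser in the component group. Once this is set up correctly — which is exactly what Notation \ref{notat:points_on_Jacobians} and the matrix description in the proof of Lemma \ref{agree_with_obvious_maps_f_star} let one do — both identities fall out of the orbit-stabiliser theorem. I would therefore spend the bulk of the write-up on a clean statement of the fibre/ramification picture and treat the two displayed equalities as immediate consequences.
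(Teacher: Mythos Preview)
Your argument is correct. The paper takes a slightly different organisational route: rather than running the orbit--stabiliser count directly in the general setting, it first base-changes to $\bar{k}$, then splits $\overline{X}$ into $G$-orbits of connected components, and identifies $\overline{X}/G$ (on each orbit) with $\overline{X}_x/G_x$ for a chosen component $\overline{X}_x$ with stabiliser $G_x$, explicitly describing how $\pi$ restricts to each component. This reduces both identities to the geometrically connected case, which the paper then declares ``standard'' without further detail.

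Your approach effectively supplies that ``standard'' computation (fibre $=G$-orbit, $e(P)=\#\textup{Stab}_G(P)$, orbit--stabiliser) and observes that it goes through verbatim in the disconnected case once one notes that $\textup{Stab}_G(P)\subseteq\textup{Stab}_G(\Gamma)$, so the component-permutation issue takes care of itself. The paper's reduction is a bit more structural and makes the role of the component stabilisers explicit, which is convenient for later reuse; your direct computation is shorter and avoids setting up the reduction machinery. Either is fine. One small point: your parenthetical about ``the identity as self-maps of $\textup{Div}^0(\overline{X/G})$ follows symmetrically'' is slightly garbled---you have already proved both identities by that point, so that sentence can simply be dropped.
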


\begin{proof}
When $X$ is geometrically connected this is standard. We formally reduce to this case as follows. First, base-changing to $\bar{k}$, it suffices to prove the result for $X$ replaced by $\overline{X}$, equipped with the base-changed action of $G$. Moreover, splitting into orbits for the action of $G$ on the connected components of $\overline{X}$, we reduce to the case where $G$ acts transitively on $\textup{Spec }\bar{k}(X)$. After these reductions, fixing $x\in \textup{ Spec }\bar{k}(X)$ and denoting by $G_x$ the stabiliser of $x$ in $G$, one finds that the quotient $\overline{X}/G$ is isomorphic to $Y=\overline{X}_x/G_x$. To describe the quotient map $\pi:\overline{X}\rightarrow Y$, for $\sigma \in G$  denote by $\overline{\sigma}^{-1}$ the isomorphism 
$ \overline{X}_{\sigma x}/G_{\sigma x}\cong Y$
induced by the action of $\sigma^{-1}$ on $\overline{X}$. Then  the restriction of $\pi$ to $\overline{X}_{\sigma x}$ is the composition
\[\overline{X}_{\sigma x}\longrightarrow \overline{X}_{\sigma x}/G_{\sigma x}\stackrel{\overline{\sigma}^{-1}}{\longrightarrow}Y,\] 
  the first morphism being the natural quotient map. This reduces the statement to the corresponding result for the actions of the groups $G_{\sigma x}$ on the geometrically connected curves  $\overline{X}_{\sigma x}$.
\end{proof}

\begin{remark} \label{rem:invariant_function_algebra}
Since $X/G$ is formed by glueing the schemes $\textup{Spec }\mathcal{O}_X(U)^G$ as $U$ ranges over $G$-stable  affine open subsets of $X$, we see that $k(X/G)=k(X)^G$. 
\end{remark}

\begin{remark} \label{rem:is_quotient}
Let $Y$ be a geometrically connected $k$-curve, let $F=k(Y)$ be its function field, and let $L/F$ be a finite \'{e}tale $F$-algebra.  Further, let $f:X\rightarrow Y$ be the cover corresponding to $L/F$ via Proposition \ref{curves_vs_etale_algebras}, and suppose that $G$ is a finite subgroup of $\textup{Aut}_F(L)$. Via Proposition~\ref{curves_vs_etale_algebras} we view $G$ as a finite subgroup of $\textup{Aut}_k(X)$. Let $X/G$ denote the quotient curve. From Remark~\ref{rem:invariant_function_algebra} we see that the morphism of covers  of $Y$ corresponding to the inclusion $L^G\hookrightarrow L$ is isomorphic to the quotient morphism $\pi:X\rightarrow X/G$. 
\end{remark}

\begin{remark}
Following on from the previous remark,   suppose we have a cover $X\rightarrow Y$ of curves, with $Y$ geometrically connected, and let $\widetilde{X}\rightarrow Y$ be its $S_n$-closure (cf. Definition \ref{def:snclosurecurve}). Then $Y$ is isomorphic to the quotient $\widetilde{X}/S_n$. Moreover, $\widetilde{X}\rightarrow Y$ factors through the initial cover $X\rightarrow Y$, and identifies $X$ with the quotient of $\widetilde{X}$ by $1\times S_{n-1}$. 
\end{remark}

\subsection{Equivariant Riemann--Hurwitz}
\label{subsec:equiv_riemann_hurwitz} 
Let $Y$ be a geometrically connected $k$-curve of genus $g(Y)$ and let $\pi:X \to Y$ be a $G$ cover. We now prove Proposition \ref{Thm:Equivariant Riemann--Hurwitz}. 
The case where $X$ is geometrically connected is the equivariant Riemann--Hurwitz formula  \cite[Proposition 1.1]{Riemann-Hurwitz}. The general case, whose proof reduces to this, is as follows. For the statement, note that the group $G$ acts transitively on the set of connected components of $\overline{X}=X_{\bar{k}}$ (since $Y$ is geometrically connected). For a connected component $\Gamma$ of $\overline{X}$,  we denote by $\textup{Stab}_G(\Gamma)$ its stabiliser in $G$, so that elements of $\textup{Stab}_G(\Gamma)$ act on $\Gamma$ as $\bar{k}$-automorphisms.

\begin{proposition} \label{Theorem:equiv_RH}
Let $\Gamma$ be a connected component of $\overline{X}$, write $H=\textup{Stab}_G(\Gamma)$, and write $\pi|_\Gamma: \Gamma \to Y$ for the restriction of $\pi$ to $\Gamma$. 
Denote by $\{q_1,\dots,q_r\} \subseteq Y(\bar{k})$  the branch points  of $\pi|_\Gamma$. For each $i$, let $t_i \in \pi|_\Gamma^{-1}(q_i)$ be any choice of preimage of $q_i$.   Then, for every prime $\ell$, the representations
\[V_{\ell}(\textup{Jac}_{X}) \ \ \textup{and} \ \ \textup{Ind}_{H}^{G} \mathds{1}^{\oplus 2} \oplus \textup{Ind}_{\{e\}}^{G}\mathds{1}^{\oplus(2\textup{g}(Y)-2) }\oplus \bigoplus_{i=1}^{r} (\textup{Ind}_{\{e\}}^{G} \mathds{1} \ominus \textup{Ind}_{\textup{Stab}_H(t_i)}^{G}\mathds{1})  \]
are isomorphic after extending scalars to $\mathbb{C}$.  
\end{proposition}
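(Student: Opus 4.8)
The strategy is to reduce the general statement to the geometrically connected case, which is the equivariant Riemann--Hurwitz formula of \cite[Proposition 1.1]{Riemann-Hurwitz}, applied to the cover $\pi|_\Gamma\colon\Gamma\to Y$ with respect to the group $H=\textup{Stab}_G(\Gamma)$. First I would record the decomposition $\overline{X}=\bigsqcup_{\sigma H\in G/H}\sigma\Gamma$ of $\overline{X}$ into its connected components, which are permuted transitively by $G$ since $Y$ is geometrically connected. Correspondingly, by \eqref{jac_kbar} we get $(\textup{Jac}_X)_{\bar k}\cong\prod_{\sigma H\in G/H}\textup{Jac}_{\sigma\Gamma}$, and hence $V_\ell(\textup{Jac}_X)\cong\bigoplus_{\sigma H}V_\ell(\textup{Jac}_{\sigma\Gamma})$ as $\mathbb{Q}_\ell$-vector spaces. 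The key observation is that this direct sum is exactly the induced representation $\textup{Ind}_H^G\bigl(V_\ell(\textup{Jac}_\Gamma)\bigr)$: an element $g\in G$ sends the component $\sigma\Gamma$ isomorphically to $g\sigma\Gamma$, so $V_\ell(\textup{Jac}_X)$, as a $G$-module, is obtained by inducing up the $H$-module $V_\ell(\textup{Jac}_\Gamma)$. (One should be slightly careful about whether one induces $V_\ell(\textup{Jac}_\Gamma)$ or its dual/contragredient; since $V_\ell$ of an abelian variety is self-dual up to Tate twist and we only care about the character after extending scalars to $\mathbb{C}$, this will not matter, but I would spell out the identification via the explicit action of $G$ on divisors from Lemma \ref{agree_with_obvious_maps_f_star}.)

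Next I would invoke the geometrically connected equivariant Riemann--Hurwitz formula for $\Gamma$: as $\mathbb{C}[H]$-representations,
\[
V_\ell(\textup{Jac}_\Gamma)\otimes_{\mathbb{Q}_\ell}\mathbb{C}\;\cong\;\mathds{1}_H^{\oplus 2}\;\oplus\;\textup{Ind}_{\{e\}}^{H}\mathds{1}^{\oplus(2\textup{g}(Y)-2)}\;\oplus\;\bigoplus_{i=1}^r\bigl(\textup{Ind}_{\{e\}}^{H}\mathds{1}\ominus\textup{Ind}_{\textup{Stab}_H(t_i)}^{H}\mathds{1}\bigr),
\]
where $q_1,\dots,q_r$ are the branch points of $\pi|_\Gamma$ and $t_i$ is a chosen preimage of $q_i$. (This is the cited result, possibly after rearranging terms; it expresses $V_\ell(\textup{Jac}_\Gamma)$ minus the permutation module on a generic fibre as a sum of permutation modules attached to the ramification data.) Here I am using that virtual-representation identities of this shape make sense: the right-hand side is a genuine representation because its character has non-negative multiplicities, which is part of the content of the cited proposition.

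Finally I would apply $\textup{Ind}_H^G(-)$ to both sides and use transitivity of induction, $\textup{Ind}_H^G\circ\textup{Ind}_{K}^{H}=\textup{Ind}_K^G$ for $K\le H$, together with the fact that induction is additive and commutes with $\ominus$ of virtual characters. This turns $\textup{Ind}_{\{e\}}^{H}\mathds{1}$ into $\textup{Ind}_{\{e\}}^{G}\mathds{1}$, turns $\textup{Ind}_{\textup{Stab}_H(t_i)}^{H}\mathds{1}$ into $\textup{Ind}_{\textup{Stab}_H(t_i)}^{G}\mathds{1}$, and turns $\mathds{1}_H$ into $\textup{Ind}_H^G\mathds{1}$, yielding precisely the asserted isomorphism after $\otimes_{\mathbb{Q}_\ell}\mathbb{C}$. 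The only genuine obstacle is the bookkeeping in the first paragraph: one must pin down carefully that $V_\ell(\textup{Jac}_X)$ really is $\textup{Ind}_H^G V_\ell(\textup{Jac}_\Gamma)$ as a $G$-module (not merely abstractly of the same dimension), using the explicit description of $g_*$ on divisors and the product decomposition \eqref{jac_kbar}; everything after that is formal manipulation of (virtual) characters. A minor subtlety worth a remark is that the branch locus and the stabilisers $\textup{Stab}_H(t_i)$ depend only on $\Gamma$ and not on the auxiliary choices, and that replacing $\Gamma$ by another component $\sigma\Gamma$ conjugates all the data, leaving the induced representation unchanged — so the statement is well posed.
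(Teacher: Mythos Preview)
Your proposal is correct and follows essentially the same route as the paper: reduce to $\bar k$, identify $V_\ell(\textup{Jac}_X)\cong\textup{Ind}_H^G V_\ell(\textup{Jac}_\Gamma)$ via the transitive $G$-action on the components of $\overline{X}$ (the paper packages this step as a separate lemma), then apply the geometrically connected equivariant Riemann--Hurwitz to $\pi|_\Gamma$ and induce from $H$ to $G$ using transitivity of induction. Your extra remarks on self-duality and well-posedness under change of component are fine but not needed for the argument.
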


The following lemma will suffice for the reduction to the geometrically connected case. With a view to applications, we state it slightly more generality than needed. 

\begin{lemma} \label{orbs_of_comps_lemma}
Let $Z$ be a connected component of $X$ and let $H=\textup{Stab}_G(Z)$ denote the stabiliser of $Z$ in $G$. Then we have an isomorphism of $\mathbb{Q}_{\ell}[G]$-modules 
\begin{equation}
V_{\ell}(\textup{Jac}_X)\cong \textup{Ind}_H^GV_{\ell}(\textup{Jac}_Z).
\end{equation}
Similarly, we have an isomorphism of $k[G]$-modules
\begin{equation} \label{dif_ind_jac_omega}
\Omega^1(\textup{Jac}_X)\cong \textup{Ind}_H^G\Omega^1(\textup{Jac}_Z).
\end{equation}
\end{lemma}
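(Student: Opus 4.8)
The plan is to reduce the statement to the decomposition of $\textup{Jac}_X$ into a product over connected components of $X$, and then to identify the action of $G$ on this product as an induced module. First I would observe that, since $Z$ is a connected component of $X$ and $H=\textup{Stab}_G(Z)$, the $G$-orbit of $Z$ consists of the components $\{gZ : gH \in G/H\}$, and these are pairwise distinct and exhaust a union of components of $X$ that is $G$-stable. (If $X$ has other components not in this orbit, one should restrict attention to the sub-curve $X_0 = \bigsqcup_{gH\in G/H} gZ$; but in the intended application $X\to Y$ is a $G$-cover with $Y$ geometrically connected, so $G$ acts transitively on components and $X_0 = X$. I would state the lemma for the sub-curve spanned by the orbit, or simply assume $G$ acts transitively on $\pi_0(X)$, matching the setting of Proposition \ref{Theorem:equiv_RH}.) Then the decomposition $\textup{Jac}_X \cong \prod_{gH \in G/H} \textup{Jac}_{gZ}$ from \eqref{jac_kbar} (applied, if necessary, over $\bar k$, though the decomposition into components is already available over $k$ when we only split off the $G$-orbit) is a product indexed by the coset space $G/H$.

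Next I would make the $G$-action on this product explicit: an element $\sigma \in G$ sends the factor indexed by $gZ$ isomorphically to the factor indexed by $\sigma g Z$, via the map on Jacobians functorially induced by the $\bar k$-automorphism $\sigma : gZ \xrightarrow{\sim} \sigma g Z$ (using Lemma \ref{agree_with_obvious_maps_f_star} / Remark \ref{g_and_dual_inverse_rmk} to see these are the evident maps on divisors, and that $g^* = g_*^{-1}$, so the action is well-defined). Applying the Tate module functor $V_\ell(-)$, which is additive and covariant, we get $V_\ell(\textup{Jac}_X) \cong \bigoplus_{gH \in G/H} V_\ell(\textup{Jac}_{gZ})$ with $G$ permuting the summands compatibly with the component-permutation action on $G/H$, and with the stabiliser $H$ of the summand indexed by the trivial coset acting on $V_\ell(\textup{Jac}_Z)$ in the natural way. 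This is precisely the description of the induced module $\textup{Ind}_H^G V_\ell(\textup{Jac}_Z)$: choosing coset representatives $g_1=e,\dots,g_m$ for $G/H$, the map $v \mapsto \sum_i g_i \otimes (\text{component of } v \text{ in } g_i\text{-factor})$ is the desired $\mathbb{Q}_\ell[G]$-isomorphism. The identical argument with $\Omega^1(-)$ in place of $V_\ell(-)$ — using the canonical isomorphism $\alpha_X: \Omega^1(\textup{Jac}_X) \xrightarrow{\sim} \Omega^1(X)$ of Proposition \ref{iso_of_diffs_vs_jac_prop} and its compatibility with pushforward/pullback, together with $\Omega^1(X) \cong \prod_x \Omega^1(\overline X_x)$ — yields \eqref{dif_ind_jac_omega} as $k[G]$-modules.

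I expect the only real subtlety — rather than a genuine obstacle — to be bookkeeping the $G$-equivariance carefully: namely checking that the isomorphism $\textup{Jac}_X \cong \prod_{gH} \textup{Jac}_{gZ}$ is not merely an isomorphism of abelian varieties but intertwines the $G$-action on the left with the "permute-and-twist" action on the right, and that the twisting automorphisms $\bar\sigma: \textup{Jac}_{gZ} \to \textup{Jac}_{\sigma gZ}$ compose correctly (i.e. the cocycle condition $\overline{\sigma\tau} = \bar\sigma\,\bar\tau$ on the nose, so we really have a group action and not just a projective one). This is where invoking Lemma \ref{agree_with_obvious_maps_f_star} to work on $\bar k$-points in terms of divisors makes everything transparent: on divisor groups the action is literally $\sigma \cdot \big(\sum n_P P\big) = \sum n_P \,\sigma(P)$, which visibly defines a $G$-action permuting $\textup{Div}^0(\overline{gZ})$ around, and descends to the same statement on Jacobians. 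Once this is set up, the identification with the induced representation is the standard characterisation of $\textup{Ind}_H^G$ as the module of functions on $G$ transforming under $H$, so no further work is needed.
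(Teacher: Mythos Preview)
Your proposal is correct and follows essentially the same approach as the paper: decompose $\textup{Jac}_X$ as a product over the $G$-translates $y_iZ$ of $Z$ using a left transversal for $H$ in $G$, apply the functors $V_\ell$ and $\Omega^1$ to obtain a direct sum indexed by $G/H$, and identify the resulting $G$-module with the induced representation. The paper's proof is terser---it simply says the claimed isomorphisms follow ``once the $G$-action is taken into account''---whereas you spell out the permute-and-twist action and the cocycle check, but the underlying argument is the same.
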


\begin{proof}
As above, the assumption that $Y$ is (geometrically) connected ensures that $G$ acts transitively on the irreducible components of $X$. Fix a left transversal $y_1,...,y_n$ for $H$ in $G$, so that $y_1Z,...,y_nZ$ is the collection of connected components of $X$. Then $X=\bigsqcup_{i=1}^ny_iZ$ and we have an isomorphism of $k$-schemes
\[\alpha:\textup{Jac}_X  \stackrel{\sim}{\longrightarrow} \prod_{i=1}^n\textup{Jac}_{y_iZ} \]
induced by pullback along the inclusions  $y_iZ\hookrightarrow X$. This induces isomorphisms
\[V_{\ell}(\textup{Jac}_X)\cong \bigoplus_{i=1}^nV_{\ell}(\textup{Jac}_{y_iZ})\quad \textup{ and }\quad \Omega^1(\textup{Jac}_X)\cong \bigoplus_{i=1}^n\Omega^1(\textup{Jac}_{y_iZ}),\]
which give the isomorphisms of representations claimed in the statement once the $G$-action is taken into account.  
\end{proof}

\begin{remark}
By Proposition \ref{iso_of_diffs_vs_jac_prop}, the $k[G]$-modules  $\Omega^1(\textup{Jac}_X)$ and $\Omega^1(X)$ are isomorphic. Using this, and the corresponding statement for $Z$,  we could equivalently state \eqref{dif_ind_jac_omega} in terms of differentials on $X$.
\end{remark}

\begin{proof}[Proof of Proposition \ref{Theorem:equiv_RH}]
By considering the $G$-cover $\pi\times 1:\overline{X}\rightarrow \overline{Y}$, we see that we may assume $k$ to be algebraically closed.  Working under this assumption, and with $\Gamma$ and $H=\textup{Stab}_G(\Gamma)$ as in the statement, Lemma \ref{orbs_of_comps_lemma} gives an isomorphism of $\mathbb{Q}_\ell[G]$-representations 
\begin{equation}\label{induce_jac_vl}
V_{\ell}(\textup{Jac}_{X})\cong \textup{Ind}_{H}^{G} V_{\ell}(\textup{Jac}_{\Gamma}).
\end{equation} 
 To complete the proof we can appeal to the more standard form of equivariant Riemann--Hurwitz, in which $X$ is assumed geometrically connected. Specifically, $\pi|_\Gamma:\Gamma \rightarrow Y$ is an $H$-cover (as can be checked by considering the corresponding map on function fields). It  follows from \cite[Proposition 1.1]{Riemann-Hurwitz}  that, after extending scalars to $\mathbb{C}$, the $H$-representation $V_{\ell}(\textup{Jac}_{\Gamma})$ becomes isomorphic to 
 \[  \mathds{1}^{\oplus 2} \oplus \textup{Ind}_{\{e\}}^{H} \mathds{1}^{\oplus(2\textup{g}(Y) -2)} \oplus \bigoplus_{i=1}^{r} \big{(} \textup{Ind}_{\{e\}}^{H} \mathds{1} \ominus \textup{Ind}_{\textup{Stab}_H(t_i)}^{H} \mathds{1} \big{)}.\]
 The result now follows by inducing  from $H$ to $G$ and using \eqref{induce_jac_vl}.
\end{proof}

\subsection{From $G$-maps to isogenies}
\label{sec:Homomorphisms_between_jacobians}
As above, let $X$ be a curve over $k$, and let $G$ be a finite subgroup of  $\textup{Aut}_k(X)$. In the rest of this section, we show that one can naturally associate homomorphisms between Jacobians of quotients of $X$  to certain $G$-equivariant maps between permutation modules. This idea originates in work of Kani--Rosen \cite{MR1000113}, though we follow more closely subsequent work of  Chen  \cite{MR1779608}, and de Smit and Edixhoven \cite{MR1764312}. See Remark \ref{rem:desmit_chen_remark} below for a comparison between our main result here, Theorem \ref{main_G_mod_corresp_thm}, and those works. We will use the following notation concerning permutation modules.
\begin{notation}\label{notat:G_mod_homs_invars} 
Let $M$ be a $\mathbb{Z}[G]$-module. For each subgroup $H$ of $G$ we have an isomorphism 
\begin{equation}\label{eq:invariants_as_hom}
 \textup{Hom}_G(\mathbb{Z}[G/H],M)\stackrel{\sim}{\longrightarrow}M^{H}
\end{equation}
 given by evaluating homomorphisms at the trivial coset. Now suppose we have subgroups $H_1,...,H_n$ and $H_1',...,H_{m}'$   of $G$, and define $G$-sets 
\[S=\bigsqcup_{i=1}^nG/H_i\quad \textup{ and }\quad  S'=\bigsqcup_{j=1}^{m}G/H_j' .\] 
Taking $\mathbb{Z}[S]$ and $\mathbb{Z}[S']$ to be the corresponding permutation modules, \eqref{eq:invariants_as_hom} induces  isomorphisms 
\[\textup{Hom}_G(\mathbb{Z}[S],M)\cong \bigoplus_{i}M^{H_i}\quad \textup{ and }\quad  \textup{Hom}_G(\mathbb{Z}[S'],M)\cong \bigoplus_{j}M^{H_j'}.\]    
 Consequently, given $\phi$ in $\textup{Hom}_G(\mathbb{Z}[S],\mathbb{Z}[S'])$, the  map from $\textup{Hom}_G(\mathbb{Z}[S'],M)$ to $\textup{Hom}_G(\mathbb{Z}[S], M)$ sending $f$ to $f\circ \phi$ induces  a homomorphism
  \begin{equation} \label{eq:induced_G_hom}
 \phi^*  : \bigoplus_{j}M^{H_j'}\rightarrow \bigoplus_{i}M^{H_i}.
 \end{equation}
 Moreover, the $G$-module $\mathbb{Z}[S]$ (resp. $\mathbb{Z}[S']$) is canonically self-dual, via the pairing making the elements of $S$ (resp. $S'$) an orthonormal basis. Given $\phi$ in $\textup{Hom}_G(\mathbb{Z}[S],\mathbb{Z}[S'])$, we denote  by $\phi^\vee$ the corresponding dual homomorphism $\phi^\vee:\mathbb{Z}[S']\rightarrow \mathbb{Z}[S]$.
\end{notation}

\begin{theorem} \label{main_G_mod_corresp_thm} 
 Let $H_1,...,H_n$ and $H_1',...,H_m'$ be subgroups of $G$, take 
 \[S=\bigsqcup_{i } G/H_i\quad \textup{ and }\quad S'=\bigsqcup_{j}G/H_j',\]
 and let $\mathbb{Z}[S]$ and $\mathbb{Z}[S']$ denote the corresponding permutation modules. 

Then for each $G$-module homomorphism $\phi:\mathbb{Z}[S]\rightarrow \mathbb{Z}[S']$, there is a $k$-homomorphism of abelian varieties
\[f_\phi:\prod_{j}\Jac_{X/H_j'}\longrightarrow \prod_{i}\Jac_{X/H_i}\]
such that:
\begin{itemize}[leftmargin=0.6cm]
\item[(1)] (duality): denoting by $\phi^\vee: \mathbb{Z}[S']\rightarrow \mathbb{Z}[S]$ the dual homomorphism,  we have 
\[f_{\phi^\vee}=f_\phi^\vee.\]
(Here $f_\phi^\vee$ is the dual of $f_\phi$ with respect to the product of the canonical principal polarisations.)
\item[(2)] (functoriality): given also $\phi':\mathbb{Z}[S']\rightarrow \mathbb{Z}[S'']$, we have \[f_{\phi'\circ \phi}=f_\phi \circ f_{\phi'}.\]
\item[(3)] (Isogeny criterion):  suppose that the map 
\[\phi^*: \bigoplus_{j}V_\ell(\JX)^{H_j'} \longrightarrow \bigoplus_{i}V_\ell(\JX)^{H_i}\]
on rational $\ell$-adic Tate modules is an isomorphism.
 Then $f_\phi$ is an isogeny.
\end{itemize}
\end{theorem}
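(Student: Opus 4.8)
The plan is to build the correspondence $\phi \mapsto f_\phi$ from the elementary case of a single $G$-set map $\mathbb{Z}[G/H_j'] \to \mathbb{Z}[G/H_i]$, then assemble the general case by additivity on both coproducts. For the basic building block, recall that by \eqref{eq:invariants_as_hom} a $G$-module homomorphism $\mathbb{Z}[G/H'] \to \mathbb{Z}[G/H]$ corresponds to an element of $\mathbb{Z}[G/H]^{H'}$, i.e. an $H'$-invariant $\mathbb{Z}$-combination of cosets $gH$; by linearity it suffices to treat a single coset $gH$ with the property that $H' g H = gH$, equivalently $g^{-1}H'g \subseteq H$. Such a $g$ gives a chain of curve morphisms $X/H' \xleftarrow{\;g\;} X/(g^{-1}H'g) \twoheadrightarrow X/H$ — the left map an isomorphism induced by the automorphism $g$ of $X$, the right map the quotient morphism — and I would define the contribution to $f_\phi$ to be $\pi^* \circ (g^{-1})^*$ or its composite in the appropriate direction, reading off from the explicit divisor description in Section \ref{sec:quasi_nice_jac} which way the arrows go (the target is $\prod_i \Jac_{X/H_i}$ and the source is $\prod_j \Jac_{X/H_j'}$, so a coset $gH$ appearing in the $(i,j)$-entry of $\phi$ contributes a map $\Jac_{X/H_j'} \to \Jac_{X/H_i}$). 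Extending $\mathbb{Z}$-linearly over the cosets appearing in each matrix entry, and then taking the obvious product/coproduct over $i$ and $j$, produces $f_\phi$; well-definedness (independence of the transversal choices and of the representative $g$ of $gH$) follows because changing $g$ to $gh$ with $h \in H$ alters $(g^{-1})^*$ by $h^*$ on $\Jac_{X/(g^{-1}H'g)}$, which is absorbed after composing with the quotient map $\pi: X/(g^{-1}H'g) \to X/H$ since $\pi \circ h = \pi$.

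Functoriality, part (2), I would check first on the building blocks and then extend by bilinearity: composing $gH \subseteq \mathbb{Z}[G/H] \to \mathbb{Z}[G/H'']$-entries, the composite in $\mathbb{Z}[G]$-modules of $[g'] $ and $[g]$ unwinds to $[g'g]$ together with a sum over double-coset combinatorics, and the corresponding composite of curve maps $X/H'' \to X/(\cdot) \to X/H' \to X/(\cdot) \to X/H$ reduces — via Proposition \ref{pull_back_pushforward_prop_quasi_nice}, i.e. $\pi^*\pi_* = \sum_{g} g_*$ and $\pi_*\pi^* = |G|$ — to exactly the same sum. This is the step where the push-pull identities do the real work, and the bookkeeping of which subgroup sits at each vertex of the diagram is the fiddly part. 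Duality, part (1), then comes almost for free: on each building block, dualising $gH$ (with respect to the orthonormal bases) swaps the roles of source and target and replaces $g$ by $g^{-1}$, while on the geometric side Lemma \ref{lem:fstar_duality} gives $(\pi^*)^\vee = \pi_*$ and Remark \ref{g_and_dual_inverse_rmk} gives $(g^*)^\vee = (g_*)^{-1} = (g^{-1})^*$; since everything is assembled additively and the product polarisation dualises a product of maps entrywise-transposed, $f_{\phi^\vee} = f_\phi^\vee$ follows.

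For part (3), the isogeny criterion, the plan is: an isogeny between products of abelian varieties of the same dimension is detected on rational Tate modules, so it suffices to show $f_\phi$ induces an isomorphism $\bigoplus_j V_\ell(\Jac_{X/H_j'}) \to \bigoplus_i V_\ell(\Jac_{X/H_i})$. Now $\Jac_{X/H}$ has $V_\ell(\Jac_{X/H}) \cong V_\ell(\Jac_X)^{H}$ — this is the content of Theorem \ref{thm:introgaloisdescent}(3) (Galois descent for Tate modules), applied to the quotient $X \to X/H$ — and under these identifications the map induced by $f_\phi$ is precisely the map $\phi^*: \bigoplus_j V_\ell(\Jac_X)^{H_j'} \to \bigoplus_i V_\ell(\Jac_X)^{H_i}$ of \eqref{eq:induced_G_hom}; this compatibility is checked on the building blocks, where it amounts to the statement that the pullback $\pi^*$ on Tate modules realizes the inclusion $V_\ell(\Jac_X)^H \hookrightarrow V_\ell(\Jac_X)$ up to the $H$-trace, together with functoriality of $V_\ell$ in the automorphism $g$. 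Granting the hypothesis that $\phi^*$ is an isomorphism, $f_\phi$ is then an isogeny.

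\textbf{Main obstacle.} I expect the genuinely delicate point to be establishing the compatibility in part (3) — that the map $f_\phi$ induces exactly $\phi^*$ on Tate modules after the identification $V_\ell(\Jac_{X/H}) \cong V_\ell(\Jac_X)^H$ — with the correct normalization. One has to pin down whether $\pi^*$ or $\pi_*$ implements the inclusion $V_\ell^H \hookrightarrow V_\ell$ versus the projection $V_\ell \twoheadrightarrow V_\ell^H$ (they differ by the idempotent $\frac{1}{|H|}\sum_{h\in H} h$, which is invisible over $\mathbb{Q}_\ell$ but must be tracked to get the arrows and the functor $\textup{Hom}_G(-,M)$ versus its adjoint to line up), and to confirm that the additive assembly over $i,j$ genuinely matches the block structure of $\phi^*$ coming from \eqref{eq:induced_G_hom} rather than its transpose. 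The duality statement of part (1) is a useful consistency check here, since it forces the two possible conventions to be interchanged under $\phi \leftrightarrow \phi^\vee$.
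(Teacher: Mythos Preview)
Your base-case construction has a genuine gap. You claim that ``by linearity it suffices to treat a single coset $gH$ with the property that $H'gH = gH$,'' but the $H'$-invariants $\mathbb{Z}[G/H]^{H'}$ are \emph{not} spanned by such single cosets: a $\mathbb{Z}$-basis is given by the sums $\sum_{xH\subseteq H'gH} xH$ over the double cosets $H'gH$, and such a sum is a single coset only in the special case $g^{-1}H'g\subseteq H$. For a general double coset your chain $X/H' \xleftarrow{g} X/(g^{-1}H'g)\to X/H$ does not exist, since there is no quotient map $X/(g^{-1}H'g)\to X/H$ when $g^{-1}H'g\not\subseteq H$. The standard repair is to route through the intermediate curve $X/(g^{-1}H'g\cap H)$, which maps to both $X/H$ and (via $g$) to $X/H'$, and to define the contribution of the double coset as a pull--push through this correspondence; that works, but then functoriality in part~(2) becomes a Mackey-style double-coset decomposition of the composite of two correspondences, rather more involved than the single line you allot to it.

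The paper avoids this entirely by a different construction. For $\phi:\mathbb{Z}[G/H]\to\mathbb{Z}[G/H']$ it lifts $\phi(H)$ arbitrarily to $\sum_g m_g g\in\mathbb{Z}[G]$, sets $\widetilde\phi=\sum_g m_g g_*\in\mathrm{End}(\Jac_X)$, and puts $\alpha(\phi)=(\pi_H)_*\circ\widetilde\phi\circ\pi_{H'}^*$. The key lemma is that $\alpha(\phi)$ kills $\Jac_{X/H'}[|H|]$ (checked by an explicit computation with divisors of functions), so there is a unique $f_\phi$ with $|H|f_\phi=\alpha(\phi)$; moreover $f_\phi$ satisfies the intertwining relation $\pi_H^*\circ f_\phi=\widetilde\phi\circ\pi_{H'}^*$. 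Functoriality in part~(2) is then immediate from the uniqueness clause, with no double-coset combinatorics at all. Duality in part~(1) is proved on the double-coset basis via the symmetric identity $\sum_{t\in HgH'}t=(\textup{lift of }|H'|\phi_{HgH'}(H))=(\textup{lift of }|H|\phi_{H'g^{-1}H}(H'))$ in $\mathbb{Z}[G]$, which is where the combinatorics resurface but in a contained way. Your plan for part~(3) is essentially the paper's: the paper packages it as an ``additive functor lemma'' saying that for any additive functor $F$ to $\mathbb{Q}_\ell$-vector spaces, $F(\pi_H^*)$ gives an isomorphism $F(\Jac_{X/H})\cong F(\Jac_X)^H$ and the square with $F(f_\phi)$ and $\phi^*$ commutes; the commutativity is exactly the intertwining relation $\pi_H^*\circ f_\phi=\widetilde\phi\circ\pi_{H'}^*$ applied to $F=V_\ell$, so the normalization issue you flag as the main obstacle is resolved automatically by that relation.
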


\begin{remark} \label{rem:desmit_chen_remark}
As mentioned above, a version of of Theorem \ref{main_G_mod_corresp_thm}  appears in work of Chen  \cite{MR1779608} and de Smit and Edixhoven \cite{MR1000113}. Aside from the extra generality afforded by our relaxed notion of a curve, Theorem \ref{main_G_mod_corresp_thm} has some additional technical advantages over the maps constructed in those works. Specifically, in place of $f_\phi$, Chen uses the map denoted $\alpha(\phi)$ in Construction \ref{alpha_phi}. While adequate for his purposes, this map does not (quite) satisfy the duality and functoriality statements in parts (1) and (2) of Theorem \ref{main_G_mod_corresp_thm}. By contrast, the work of de Smit and Edixhoven uses the same map  $f_\phi$ as we do, but constructs it only in the category of abelian varieties with isogenies inverted. We are able to improve on this as a consequence of  Lemma \ref{functorial_isogeny_lemma} below.
\end{remark}

We now turn to the proof of Theorem \ref{main_G_mod_corresp_thm}, which will take the rest of the section. 

\begin{notation} \label{Notation: quasi-nice curves}
For $g\in G$, denote by $g_*$ the corresponding automorphism of $\JX$ (cf. Definition~\ref{quasi_nice_f_star}).  
For a subgroup $H$ of $G$,  denote by $\pi_H:X\rightarrow X/H$ the quotient morphism. As in \S \ref{induced_map_jac_q_n}, we have associated mutually dual homomorphisms
\[\pi_H^*:\Jac_{X/H}\longrightarrow \JX \quad \textup{ and }\quad (\pi_H)_*:\JX\longrightarrow \Jac_{X/H}.\]
We also set $\JX^H=\cap_{h\in H}\ker(h_*-1)$. This is a group subvariety of $\JX$. The connected component of the identity $(\JX^H)^0$ is an abelian subvariety of $\JX$.
\end{notation}

 \begin{lemma} \label{prop:main_input_maps} 
Let $H$ be a subgroup of $G$.  
 \begin{itemize}
 \item[(1)] We have 
 \[(\pi_H)_*\circ (\pi_H)^* =|H|\quad \textup{ and }\quad  (\pi_H)^*\circ  (\pi_H)_*=\sum_{h\in H}h_*.\]
 \item[(2)] The image of $(\pi_H)^*$ is contained in $ (\JX^H)^0 $ and the resulting homomorphism
 \begin{equation} \label{eq:connected_cmp_isog}
 (\pi_H)^*:\Jac_{X/H}\longrightarrow  (\JX^H)^0 
 \end{equation}
 is an isogeny. 
 \end{itemize}
 \end{lemma}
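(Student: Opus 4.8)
The plan is as follows. Part (1) is exactly Proposition \ref{pull_back_pushforward_prop_quasi_nice} applied with $H$ in place of $G$: since $H$ is a finite subgroup of $G\leq\Aut_k(X)$ it is itself a finite group of $k$-automorphisms of $X$, and $\pi_H:X\to X/H$ is its quotient morphism. The maps $(\pi_H)^*$ and $(\pi_H)_*$ appearing there are precisely those of Definition \ref{quasi_nice_f_star}, so we obtain $(\pi_H)_*\circ(\pi_H)^*=|H|$ and $(\pi_H)^*\circ(\pi_H)_*=\sum_{h\in H}h_*$ directly.

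For part (2), I would first check that $(\pi_H)^*$ factors through $(\JX^H)^0$. For each $h\in H$ the quotient morphism satisfies $\pi_H\circ h=\pi_H$, so pulling back line bundles gives $h^*\circ(\pi_H)^*=(\pi_H)^*$ as homomorphisms $\Jac_{X/H}\to\JX$. By Remark \ref{g_and_dual_inverse_rmk} we have $h^*=h_*^{-1}$, so equivalently $h_*\circ(\pi_H)^*=(\pi_H)^*$ for every $h\in H$; hence the image of $(\pi_H)^*$ is fixed pointwise by each $h_*$ and therefore lies in $\JX^H=\bigcap_{h\in H}\ker(h_*-1)$. Being the image of the connected variety $\Jac_{X/H}$ and containing the origin, this image in fact lies in the identity component $(\JX^H)^0$. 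Moreover, part (1) gives $(\pi_H)_*\circ(\pi_H)^*=[|H|]$ on $\Jac_{X/H}$, whose kernel $\Jac_{X/H}[|H|]$ is finite, so $(\pi_H)^*$ has finite kernel.

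It remains to prove that the resulting map $(\pi_H)^*:\Jac_{X/H}\to(\JX^H)^0$ is surjective, which I expect to be the one genuine step of the argument. The idea is to restrict the identity $(\pi_H)^*\circ(\pi_H)_*=\sum_{h\in H}h_*$ from part (1) to the abelian subvariety $(\JX^H)^0\subseteq\JX$: each $h_*$ with $h\in H$ restricts to the identity on $\JX^H$, so the right-hand side restricts to multiplication by $|H|$ on $(\JX^H)^0$. Hence $[|H|]$ on $(\JX^H)^0$ equals the restriction to $(\JX^H)^0$ of $(\pi_H)^*\circ(\pi_H)_*$, which visibly factors through $(\pi_H)^*$. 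Since multiplication by the nonzero integer $|H|$ is surjective on the abelian variety $(\JX^H)^0$, it follows that $(\JX^H)^0$ is contained in — hence, by the previous paragraph, equal to — the image of $(\pi_H)^*$. A surjective homomorphism of abelian varieties with finite kernel is an isogeny, which completes the proof. In effect all the content sits in the computation of part (1); the only subtlety is to remember to pass to the identity component $(\JX^H)^0$, on which each $h_*$ acts trivially, before invoking surjectivity of $[|H|]$. (One could alternatively establish the dimension count by noting, via left-exactness of $T_\ell$, that $V_\ell\big((\JX^H)^0\big)=V_\ell(\JX)^H$ and comparing with $V_\ell((\pi_H)^*)$, but the direct argument above avoids this.)
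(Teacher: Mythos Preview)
Your proof is correct and follows essentially the same approach as the paper: part (1) is Proposition \ref{pull_back_pushforward_prop_quasi_nice}, and for part (2) both you and the paper argue that the two compositions of $(\pi_H)^*$ and $(\pi_H)_*$ restrict to multiplication by $|H|$ between $\Jac_{X/H}$ and $(\JX^H)^0$, whence the map is an isogeny. Your argument that the image of $(\pi_H)^*$ lies in $\JX^H$ (via $\pi_H\circ h=\pi_H$ and Remark \ref{g_and_dual_inverse_rmk}) is in fact slightly more direct than the paper's, which instead passes to $\bar{k}$ and uses surjectivity of $(\pi_H)_*$ on divisors to show that the image of $(\pi_H)^*$ is contained in the image of $\sum_{h\in H}h_*$.
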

 
 \begin{proof}
 (1): Follows from Proposition \ref{pull_back_pushforward_prop_quasi_nice}. 
 (2):  We first show that the image of $(\pi_H)^*$ is contained in $ (\JX^H)^0$. Since $\Jac_{X/H}$ is connected, it suffices to show that the image of  $(\pi_H)^*$ is contained in $\JX^H$, for which we can assume $k=\bar{k}$. Working under this assumption, note that the map $(\pi_H)_*:\textup{Div}^0(X)\rightarrow \textup{Div}^0(X/H)$ of \eqref{f_star_in_quasi_divs} is surjective. From this and Lemma \ref{agree_with_obvious_maps_f_star} we deduce that $(\pi_H)_*:\JX\rightarrow \Jac_{X/H}$ is surjective, so that by (1) the image of $\pi_H^*$ is contained in the image of $\sum_{h\in H}h_*$. Since this latter image is contained in $\cap_{h\in H}\textup{ker}(h_*-1)$, the result follows. Finally, to show that the map in  \eqref{eq:connected_cmp_isog} is an isogeny, note that by (1)  the endomorphism $(\pi_H)^*\circ  (\pi_H)_*$ of $\JX$ restricts to multiplication by $|H|$ on $ (\JX^H)^0$. In particular, we have homomorphisms $(\pi_H)^*:\Jac_{X/H}\rightarrow  (\JX^H)^0 $ and $(\pi_H)_*: (\JX^H)^0 \rightarrow \Jac_{X/H}$ whose composition in each direction is multiplication by $|H|$. Thus both maps are isogenies.
 \end{proof}

 We now turn to constructing the map $f_\phi$ of Theorem \ref{main_G_mod_corresp_thm} in the case that $n=m=1$.

\begin{constructions} \label{alpha_phi}  
\textup{Let $H$ and $H'$ be subgroups of $G$, and let $\phi \in \textup{Hom}_G(\mathbb{Z}[G/H],\mathbb{Z}[G/H'])$. Let $\sum_{g\in G}m_gg$ be any lift of $\phi(H)$ under the projection $\mathbb{Z}[G]\rightarrow \mathbb{Z}[G/H']$.  We have an associated endomorphism of $\JX$ given by
\[\widetilde{\phi}:=\sum_{g\in G}m_gg_*.\]  
Since $H'$ acts trivially on $\JX^{H'}$, the restriction of $\widetilde{\phi}$ to $\JX^{H'}$ is independent of the choice of lift of $\phi(H)$, and maps $\JX^{H'}$ to $\JX^{H}$.  Consequently (cf. \cite[Lemma 3.3]{MR1779608}), we have a $k$-homomorphism of abelian varieties 
\begin{equation*}
\alpha(\phi):=(\pi_{H})_*\circ \widetilde{\phi}\circ \pi_{H'}^*: \Jac_{X/H'}\longrightarrow \Jac_{X/H},
\end{equation*}
which is again independent of choices.}
\end{constructions}
 
The following lemma is key to obtaining an association $\phi \mapsto f_\phi$ satisfying parts (1) and (2) of Theorem \ref{main_G_mod_corresp_thm}.

\begin{lemma} \label{functorial_isogeny_lemma}
For any $\phi\in \textup{Hom}_G(\mathbb{Z}[G/H],\mathbb{Z}[G/H'])$, there exists a unique $k$-homomorphism $f_\phi:\Jac_{X/H'}\rightarrow\Jac_{X/H}$ satisfying $|H|f_\phi=\alpha(\phi)$. Moreover, we have
\begin{equation} \label{commut_diag_with_upper_star}
\pi_{H}^*\circ f_\phi=\widetilde{\phi}\circ \pi_{H'}^*,
\end{equation}
    for any choice of $\widetilde{\phi}$ as above.
\end{lemma}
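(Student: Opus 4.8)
The plan is to obtain $f_\phi$ by ``dividing $\alpha(\phi)$ by $|H|$''. The existence of such a map is not automatic, so the first step is to exhibit a factorisation of $\alpha(\phi)$ through multiplication by $|H|$ on $\Jac_{X/H}$. The key observation is that $\alpha(\phi)=(\pi_H)_*\circ\widetilde\phi\circ\pi_{H'}^*$ factors as $(\pi_H)_*$ composed with a map into $(\JX^H)^0$, and on $(\JX^H)^0$ the composition $\pi_H^*\circ(\pi_H)_*$ is multiplication by $|H|$ (Lemma \ref{prop:main_input_maps}(1)). More precisely: by Construction \ref{alpha_phi}, $\widetilde\phi$ maps $\JX^{H'}$ into $\JX^H$, and $\pi_{H'}^*$ lands in $(\JX^{H'})^0$ by Lemma \ref{prop:main_input_maps}(2); since $\widetilde\phi$ is a group homomorphism it sends the connected component $(\JX^{H'})^0$ into $(\JX^H)^0$. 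So $\widetilde\phi\circ\pi_{H'}^*$ factors through a homomorphism $\beta:\Jac_{X/H'}\to (\JX^H)^0$. By Lemma \ref{prop:main_input_maps}(2) the map $\pi_H^*:\Jac_{X/H}\to(\JX^H)^0$ is an isogeny, and $\Jac_{X/H}$ is an abelian variety (in particular divisible), so composing $\beta$ with a suitable ``inverse up to $|H|$'' will produce the desired $f_\phi$. Concretely, I would argue: $\pi_H^*\circ(\pi_H)_*$ restricted to $(\JX^H)^0$ equals $[|H|]$; hence $(\pi_H)_*|_{(\JX^H)^0}:(\JX^H)^0\to\Jac_{X/H}$ composed with $\pi_H^*$ in both orders is $[|H|]$, so $(\pi_H)_*|_{(\JX^H)^0}$ is an isogeny whose kernel is killed by $|H|$. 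Then define $f_\phi:=(\pi_H)_*\circ\beta'$ where $\beta'$ is the unique lift of $\beta$ through $[|H|]:(\JX^H)^0\to(\JX^H)^0$ — such a lift exists because $\ker\pi_H^*\subseteq (\JX^H)^0[|H|]$ forces $\beta$ to factor appropriately — or, more cleanly, observe directly that $\alpha(\phi)=(\pi_H)_*\circ\widetilde\phi\circ\pi_{H'}^*$ and $[|H|]\circ(\pi_H)_*=(\pi_H)_*\circ\pi_H^*\circ(\pi_H)_*=(\pi_H)_*\circ(\sum_{h\in H}h_*)$ on $\JX$; since $\sum_{h\in H}h_*$ acts as $[|H|]$ on the image of $\widetilde\phi\circ\pi_{H'}^*$ (which lies in $\JX^H$), one gets $[|H|]\circ\alpha(\phi)=\alpha(\phi')$-type identities that, combined with the fact that multiplication by $|H|$ on an abelian variety is faithfully flat, yield the factorisation.

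For uniqueness, this is immediate: if $|H|f_\phi=|H|f_\phi'$ then $f_\phi-f_\phi'$ has image killed by $|H|$, but $\Hom$ between abelian varieties is torsion-free, so $f_\phi=f_\phi'$. This also shows $f_\phi$ does not depend on the auxiliary choices (the lift $\sum m_g g$), since $\alpha(\phi)$ does not.

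For the displayed identity \eqref{commut_diag_with_upper_star}, I would compute $\pi_H^*\circ f_\phi$ and $\widetilde\phi\circ\pi_{H'}^*$ after multiplying by $|H|$, using that $[|H|]$ is injective on morphisms. We have $|H|\cdot(\pi_H^*\circ f_\phi)=\pi_H^*\circ\alpha(\phi)=\pi_H^*\circ(\pi_H)_*\circ\widetilde\phi\circ\pi_{H'}^*=(\sum_{h\in H}h_*)\circ\widetilde\phi\circ\pi_{H'}^*$ by Lemma \ref{prop:main_input_maps}(1). Since $\widetilde\phi\circ\pi_{H'}^*$ has image in $\JX^H$, on which each $h_*$ acts as the identity, this equals $|H|\cdot(\widetilde\phi\circ\pi_{H'}^*)$. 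Cancelling $|H|$ (torsion-freeness of $\Hom$) gives \eqref{commut_diag_with_upper_star}.

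The main obstacle is the first step — producing the map $f_\phi$ itself. Abelian varieties are not ``divisible'' at the level of morphisms (one cannot divide an arbitrary homomorphism by $|H|$), so the argument must genuinely use the structure of $\alpha(\phi)$: that it factors through the pushforward $(\pi_H)_*$ whose composition with $\pi_H^*$ recovers $[|H|]$ on the relevant abelian subvariety. Getting this factorisation stated cleanly — identifying precisely where connectedness of $(\JX^{H'})^0$ and $(\JX^H)^0$ is used and that $\widetilde\phi$ respects these components — is the delicate bookkeeping; everything after that is formal manipulation with the torsion-freeness of $\Hom(A,B)$.
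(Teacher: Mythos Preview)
Your arguments for uniqueness and for the identity \eqref{commut_diag_with_upper_star} are correct and essentially coincide with the paper's. The gap is in the existence of $f_\phi$, which you yourself flag as the main obstacle but do not actually resolve.

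Your proposed routes both amount to showing that $\beta:=\widetilde\phi\circ\pi_{H'}^*$ factors either through $\pi_H^*:\Jac_{X/H}\to(\JX^H)^0$ or through $[|H|]$ on $(\JX^H)^0$. Neither follows from the formal properties you invoke. Knowing that $\pi_H^*$ and $(\pi_H)_*|_{(\JX^H)^0}$ are mutually inverse up to $[|H|]$ tells you only that $\ker((\pi_H)_*|_{(\JX^H)^0})\subseteq(\JX^H)^0[|H|]$, not that $(\pi_H)_*$ kills all of $(\JX^H)^0[|H|]$; in general it does not, so the image $\beta(\Jac_{X/H'}[|H|])\subseteq(\JX^H)^0[|H|]$ need not map to zero under $(\pi_H)_*$ for formal reasons alone. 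Likewise, your ``more cleanly'' manipulation $[|H|]\circ(\pi_H)_*=(\pi_H)_*\circ N_H$ combined with $N_H=[|H|]$ on $\JX^H$ only recovers the tautology $[|H|]\circ\alpha(\phi)=[|H|]\circ\alpha(\phi)$; faithful flatness of $[|H|]$ gives nothing here.

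What is missing is precisely the use of the specific shape of $\widetilde\phi=\sum m_g g_*$. The paper establishes $\Jac_{X/H'}[|H|]\subseteq\ker\alpha(\phi)$ by an explicit divisor computation over $\bar{k}$: if $|H|Z=\textup{div}(\eta)$ with $\eta\in k(X/H')=L^{H'}$, one sets $\zeta=\prod_{g}g(\eta)^{m_g}$ and checks that $\zeta\in L^H$ (this uses that $\phi(H)\in\mathbb{Z}[G/H']$ is $H$-invariant and $\eta$ is $H'$-invariant), and then that $(\pi_H)_*\widetilde\phi\,\pi_{H'}^*(Z)=\textup{div}(\zeta)$. This step genuinely exploits that $\widetilde\phi$ arises from a $G$-module homomorphism; an arbitrary endomorphism of $\JX$ carrying $\JX^{H'}$ into $\JX^H$ would not suffice.
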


\begin{proof} 
Uniqueness is standard. To prove existence it suffices to show that $\Jac_{X/H'}[|H|]\subseteq \ker(\alpha(\phi))$ (cf. \cite[Theorem 4]{MR0282985}). For this we can assume that $k=\bar{k}$, which we henceforth do. 

In what follows we use the terminology set out in Notation \ref{notat:points_on_Jacobians} to talk about $k$-points of $\Jac_{X/H}$ (resp. $\Jac_{X/H'}$). Let $Z\in \textup{Div}^0(X/H')$, and suppose that $|H|Z=\textup{div}(\eta)$ for some unit $\eta \in k(X/H')=L^{H'}$. With $\sum_{g\in G}m_gg$ any lift of $\phi(H)$ to $\mathbb{Z}[G]$ as above, we see that the quantity 
\[\zeta=\prod_{g\in G}g(\eta)^{m_g}\]
 defines  an invertible element of $L^{H}=k(X/H)$. Using the fact that both $\pi_H^*$ and $\pi_{H'}^*$ commute with $\textup{div}$ (see Remark \ref{rmk:pull_back_div_commute_quasi_nice}),  one computes
\begin{equation*}
|H|\textup{div}(\zeta)=(\pi_{H})_*\pi_{H}^*(\textup{div}(\zeta))=(\pi_{H})_* \widetilde{\phi}\pi_{H'}^*(\textup{div}(\eta))=|H|(\pi_{H})_* \widetilde{\phi}\pi_{H'}^*(Z).
\end{equation*}
Thus $(\pi_{H})_*\widetilde{\phi} \pi_{H'}^*(Z)=\textup{div}(\zeta)$, as desired. 

Now let  $f_\phi:\Jac_{X/H'}\rightarrow \Jac_{X/H}$ be as in the statement. It remains to show that  $\pi_{H}^*\circ f_\phi=\widetilde{\phi}\circ \pi_{H'}^*.$  Since $(\pi_{H})_*\pi_{H}^*=|H|$, we have $(\pi_{H})_*\pi_{H}^*f_\phi= \alpha(\phi)$. Applying $\pi_{H}^*$ to this equation gives  
\[N_{H} (\pi_{H}^*f_\phi-\widetilde{\phi}\pi_{H'}^*)=0,\]
where we use the notation $N_H=\sum_{g\in G}h_*$.
Now both $\widetilde{\phi}\pi_{H'}^*$ and $\pi_{H}^*f_\phi$ map $\Jac_{X/H'}$ into the abelian variety $(\JX^H)^0$. Since $N_{H}$ acts as multiplication by $|H|$ on $\JX^H$, we see that 
$|H|(\pi_{H}^*f_\phi-\widetilde{\phi}\pi_{H'}^*)=0,$
 giving the result. 
\end{proof}

\begin{definition} \label{phi_star_pre_def}
Given $\phi$ in $\textup{Hom}_G(\mathbb{Z}[G/H],\mathbb{Z}[G/H'])$, we define   $f_\phi:\Jac_{X/H'}\rightarrow \Jac_{X/H}$ to be the $k$-homomorphism afforded by Lemma \ref{functorial_isogeny_lemma}.
\end{definition}

\begin{remark} \label{push_forward_unique_identity}
From the uniqueness property of $f_\phi$ we see that the association $\phi \mapsto f_\phi$ is (contravariantly) functorial. More precisely, given subgroups $H_1, H_2$, $H_3$ of $G$, and  $G$-module homomorphisms 
\[\phi_1:\mathbb{Z}[G/H_1]\rightarrow \mathbb{Z}[G/H_2]\quad \textup{ and }\quad \phi_2:\mathbb{Z}[G/H_2]\rightarrow \mathbb{Z}[G/H_3],\]
 we have 
$f_{\phi_2\circ \phi_1}=f_{\phi_1}\circ f_{\phi_2}.$
\end{remark}

As  explained in  Notation \ref{notat:G_mod_homs_invars}, associated to $\phi \in \textup{Hom}_G(\mathbb{Z}[G/H],\mathbb{Z}[G/H'])$ is the dual homomorphism  $\phi^\vee:\mathbb{Z}[G/H'] \rightarrow \mathbb{Z}[G/H]$. Similarly, associated to $f_\phi:\Jac_{X/H'}\rightarrow \Jac_{X/H}$ is the dual homomorphism $f_\phi^\vee:\Jac_{X/H}\rightarrow \Jac_{X/H'}$ defined with respect to the canonical principal polarisations on $\Jac_{X/H}$ and $\Jac_{X/H'}$.  

\begin{lemma} \label{duals_compared}
We have $f_\phi^\vee=f_{\phi^\vee}$.
\end{lemma}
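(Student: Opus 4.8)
The plan is to prove $f_\phi^\vee = f_{\phi^\vee}$ by exploiting the defining property $|H| f_\phi = \alpha(\phi)$ together with the commutation relation \eqref{commut_diag_with_upper_star}, reducing everything to an identity about the endomorphisms $\widetilde{\phi}$ and the maps $\pi_H^*, (\pi_H)_*$. Since all the abelian varieties in sight are principally polarised (via $\lambda_{X/H}$, $\lambda_{X/H'}$, and $\lambda_X$) and we have the duality relation $(\pi_H)_* = \lambda_{X/H}^{-1}\circ (\pi_H^*)^\vee \circ \lambda_X$ from Definition \ref{quasi_nice_f_star}, the key auxiliary fact I would first establish is that $\widetilde{\phi}$ and $\widetilde{\phi^\vee}$ are dual endomorphisms of $\JX$, i.e. $\widetilde{\phi}^\vee = \widetilde{\phi^\vee}$ (as endomorphisms, up to the identifications via $\lambda_X$). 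This should come from Remark \ref{g_and_dual_inverse_rmk}: each $g_*$ satisfies $g_*^\vee = (g_*)^{-1} = (g^{-1})_*$, so dualising $\widetilde{\phi} = \sum_g m_g g_*$ gives $\sum_g m_g (g^{-1})_*$, and one checks that $\sum_g m_g g^{-1}$ is a lift of $\phi^\vee(H')$ under $\mathbb{Z}[G]\to\mathbb{Z}[G/H]$, using that $\phi^\vee$ is the transpose of $\phi$ with respect to the orthonormal bases $G/H$, $G/H'$.

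Granting that, the argument runs as follows. Start from $|H|f_\phi = \alpha(\phi) = (\pi_H)_* \circ \widetilde{\phi} \circ \pi_{H'}^*$. Take duals with respect to the canonical polarisations: using $(\pi_H)_*^\vee = \pi_H^*$, $(\pi_{H'}^*)^\vee = (\pi_{H'})_*$, and $\widetilde{\phi}^\vee = \widetilde{\phi^\vee}$, we get $|H| f_\phi^\vee = (\pi_{H'})_* \circ \widetilde{\phi^\vee} \circ \pi_H^*$. On the other hand, applying Construction \ref{alpha_phi} to $\phi^\vee \in \textup{Hom}_G(\mathbb{Z}[G/H'], \mathbb{Z}[G/H])$ gives $\alpha(\phi^\vee) = (\pi_{H'})_* \circ \widetilde{\phi^\vee} \circ \pi_H^* = |H'| f_{\phi^\vee}$. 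So the two candidate maps satisfy $|H| f_\phi^\vee = |H'| f_{\phi^\vee}$, which is not quite the desired equality — I need to reconcile the factors $|H|$ and $|H'|$. The resolution is that $\phi\colon \mathbb{Z}[G/H]\to\mathbb{Z}[G/H']$ being a $G$-map forces a compatibility: evaluating $\phi$ and summing, $\phi(H)$ must have coefficient-sum (i.e. image under $\mathbb{Z}[G/H']\to\mathbb{Z}$) an integer multiple relation; more precisely, the augmentation argument or a direct count shows $|H'|\cdot(\text{mult of }\phi)$... so I would instead be more careful and incorporate the $|H|$ normalisation into $\alpha$ consistently from the start, comparing $|H|f_\phi^\vee$ and $|H| f_{\phi^\vee}$ directly by showing $\alpha(\phi)^\vee = $ a version of $\alpha(\phi^\vee)$ with the matching normalisation.

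Concretely, the clean route avoiding the bookkeeping headache: dualise the commutation relation \eqref{commut_diag_with_upper_star} instead of $\alpha(\phi)$. From $\pi_H^* \circ f_\phi = \widetilde{\phi}\circ \pi_{H'}^*$, take duals to obtain $f_\phi^\vee \circ (\pi_H^*)^\vee = (\pi_{H'}^*)^\vee \circ \widetilde{\phi}^\vee$, i.e. $f_\phi^\vee \circ (\pi_H)_* = (\pi_{H'})_* \circ \widetilde{\phi^\vee}$. Now precompose both sides with $\pi_H^*$ and use $(\pi_H)_* \circ \pi_H^* = |H|$ (Lemma \ref{prop:main_input_maps}(1)) on the left: $|H| f_\phi^\vee = (\pi_{H'})_* \circ \widetilde{\phi^\vee} \circ \pi_H^* = \alpha(\phi^\vee)$. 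But by Lemma \ref{functorial_isogeny_lemma} applied to $\phi^\vee$ we have $\alpha(\phi^\vee) = |H| f_{\phi^\vee}$ — here the relevant factor is $|H|$, not $|H'|$, because $f_{\phi^\vee}\colon \Jac_{X/H}\to\Jac_{X/H'}$ lands in $\Jac_{X/H}$-source... wait, $\phi^\vee\colon \mathbb{Z}[G/H']\to\mathbb{Z}[G/H]$ has target $G/H$, so Construction \ref{alpha_phi} gives $\alpha(\phi^\vee) = (\pi_{H})_* \circ \widetilde{\phi^\vee}\circ \pi_{H'}^*$ with normalisation $|H|$ by Lemma \ref{functorial_isogeny_lemma}. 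That is exactly the same shape as what I derived. So $|H| f_\phi^\vee = |H| f_{\phi^\vee}$, and since $\Jac_{X/H}$ (or rather the Hom group into an abelian variety) is torsion-free, we conclude $f_\phi^\vee = f_{\phi^\vee}$.

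The main obstacle I anticipate is precisely the normalisation bookkeeping: making sure that when I dualise the relation $\pi_H^*\circ f_\phi = \widetilde\phi\circ\pi_{H'}^*$ the polarisation identifications $\Jac_{X/H}\cong\Jac_{X/H}^\vee$ etc. are applied consistently, and that $\widetilde{\phi}^\vee = \widetilde{\phi^\vee}$ holds on the nose (not merely up to restriction to $\JX^{H'}$, where it would suffice for $\alpha$ but perhaps not for the Tate-module or divisor-level comparison). Establishing $\widetilde{\phi}^\vee = \widetilde{\phi^\vee}$ cleanly — via $g_*^\vee = (g^{-1})_*$ from Remark \ref{g_and_dual_inverse_rmk} and an explicit check that the transpose matrix $\phi^\vee$ lifts to $\sum_g m_g g^{-1}$ — is the one genuinely load-bearing computation, and I would do it first. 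Everything else is formal manipulation with the relations from Lemma \ref{prop:main_input_maps} and Lemma \ref{functorial_isogeny_lemma}, plus torsion-freeness of homomorphism groups between abelian varieties to cancel the factor $|H|$.
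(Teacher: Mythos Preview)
Your argument has two genuine errors that together leave a real gap.

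First, the claim that $\sum_g m_g g^{-1}$ is a lift of $\phi^\vee(H')$ under $\mathbb{Z}[G]\to\mathbb{Z}[G/H]$ is false for an arbitrary lift $\sum_g m_g g$ of $\phi(H)$. Take $G=S_3$, $H=\langle(12)\rangle$, $H'=\{e\}$, and $\phi(H)=(13)+(132)$ (which is $H$-fixed, hence defines a $G$-map). Here the lift is unique since $H'$ is trivial, and $\sum m_g g^{-1}=(13)+(123)$ has image $2\cdot(13)H$ in $\mathbb{Z}[G/H]$, whereas a direct computation gives $\phi^\vee(e)=(13)H$. So $\widetilde{\phi}^\vee$ is \emph{not} a valid choice of $\widetilde{\phi^\vee}$, even after restricting to $\JX^H$.

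Second, your reading of the normalisation in Lemma \ref{functorial_isogeny_lemma} for $\phi^\vee$ is wrong: since $\phi^\vee\in\textup{Hom}_G(\mathbb{Z}[G/H'],\mathbb{Z}[G/H])$, the lemma gives $|H'|f_{\phi^\vee}=\alpha(\phi^\vee)$, not $|H|f_{\phi^\vee}$, and $\alpha(\phi^\vee)=(\pi_{H'})_*\circ\widetilde{\phi^\vee}\circ\pi_H^*$, not $(\pi_H)_*\circ\widetilde{\phi^\vee}\circ\pi_{H'}^*$. So even granting the false identification $\widetilde{\phi}^\vee=\widetilde{\phi^\vee}$, your chain of equalities yields $|H|f_\phi^\vee=|H'|f_{\phi^\vee}$, which is exactly the mismatch you flagged and then talked yourself out of.

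These two errors happen to be ``off by reciprocal factors'' in specific examples, but your argument does not exhibit or exploit this cancellation. The paper's proof handles precisely this asymmetry by passing to the double-coset basis: for $\phi=\phi_{HgH'}$ one uses the symmetrised lift $\sum_{t\in HgH'}t$, which is simultaneously a lift of $|H'|\phi_{HgH'}(H)$ and (after inversion) of $|H|\phi_{H'g^{-1}H}(H')$. This built-in symmetry is what makes both $|H|$ and $|H'|$ appear on each side and cancel. Your approach with arbitrary lifts lacks any mechanism for this, and the ``load-bearing computation'' you identified cannot be completed as stated.
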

 
\begin{proof}  
It will be convenient to draw on an explicit description of $\textup{Hom}_G(\mathbb{Z}[G/H],\mathbb{Z}[G/H'])$ in terms of double cosets. Specifically, we have an isomorphism 
 \begin{equation} \label{double_coset_iso}
 \mathbb{Z}[H\backslash G/H'] \stackrel{\sim}{\longrightarrow} \textup{Hom}_{G}(\mathbb{Z}[G/H], \mathbb{Z}[G/H'])
 \end{equation}
 which sends a double coset $HgH'$ to the $G$-homomorphism $\phi_{HgH'}$ determined by 
 \begin{equation*}\label{double_coset_iso_formula}
 \phi_{HgH'}(H)=\sum_{u\in H/H\cap gH'g^{-1}}ugH'.
 \end{equation*}
An elementary calculation shows that, for any $g\in G$, we have
 \begin{equation} \label{dual_double_coset_hom}
 \phi_{HgH'}^\vee=\phi_{H'g^{-1}H}.
 \end{equation}

Writing $N_{H}=\sum_{h\in H}h\in \mathbb{Z}[G]$, we note that in $\mathbb{Z}[G]$ we have the identity
 \begin{equation} \label{double_coset_left_right_decomp}
 \sum_{u\in H/H\cap gH'g^{-1}}ugN_{H'}=\sum_{t\in HgH'}t=\sum_{w\in H'/H'\cap g^{-1}Hg}N_{H}gw.
 \end{equation}
 
We now prove the result. First, by \eqref{double_coset_iso} it suffices to take $\phi=\phi_{HgH'}$  for some $g\in G$.   From \eqref{double_coset_left_right_decomp} we see that $\sum_{t\in HgH'}t$ is a lift of $|H'|\phi_{HgH'}(H)$ to $\mathbb{Z}[G]$. In particular, writing $\gamma=\sum_{t\in HgH'}t_*$,  we have 
\[|H||H'|f_{\phi_{HgH'}}=(\pi_{H})_*\circ \gamma \circ \pi_{H'}^*.\]
Dualising gives 
\[|H||H'|f_{\phi_{HgH'}}^\vee=(\pi_{H'})_*\circ \gamma^\vee \circ \pi_{H}^*.\]
Further, Remark \ref{g_and_dual_inverse_rmk} gives
\[\gamma^\vee =\sum_{t\in HgH'}(t^{-1})_*=\sum_{t\in H'g^{-1}H}t_*.\]
Again appealing to \eqref{double_coset_left_right_decomp} we see that $\sum_{t\in H'g^{-1}H}t$ is a lift of $|H|\phi_{H'g^{-1}H}(H')$ to $\mathbb{Z}[G]$. The uniqueness property of $f_{\phi}$ thus gives  
\[f_{|H| \phi_{H'g^{-1}H}}=|H|f_{\phi_{HgH'} }^\vee,\]
so that $f_{\phi_{H'g^{-1}H}}=f_{\phi_{HgH'}}^\vee$. We now conclude from \eqref{dual_double_coset_hom}. 
\end{proof}

We now define the homomorphism $f_\phi$ in the generality claimed in Theorem \ref{main_G_mod_corresp_thm} by extending additively the association of Definition \ref{phi_star_pre_def}.

\begin{definition} \label{phi_star_main_def_brauer_weak}
As in the statement of  Theorem \ref{main_G_mod_corresp_thm}, let $H_1,...,H_n$ and $H_1',...,H_m'$ be subgroups of $G$, and define $G$-sets
 \[S=\bigsqcup_{i } G/H_i\quad \textup{ and }\quad S'=\bigsqcup_{j}G/H_j'.\]
Let $\phi:\mathbb{Z}[S]\rightarrow \mathbb{Z}[S']$ be a $G$-module homomorphism.  Since $\mathbb{Z}[S]=\bigoplus_{i}\mathbb{Z}[G/H_i]$ and $\mathbb{Z}[S']=\bigoplus_{j}\mathbb{Z}[G/H_j']$, the homomorphism $\phi$ corresponds to a collection of $G$-module homomorphisms $\phi_{ij}:\mathbb{Z}[G/H_i]\rightarrow \mathbb{Z}[G/H'_j]$.  Via Definition \ref{phi_star_pre_def}   we associate to this data the collection of \hbox{$k$-homomorphisms} $(f_{\phi_{ij}})_{i,j}$. We then define $f_{\phi}$ to be the corresponding $k$-homomorphism
\begin{equation*} \label{map_of_jacobians}
f_{\phi}:\prod_{j}\Jac_{X/H_j'}\longrightarrow \prod_{i}\Jac_{X/H_i}. 
\end{equation*}
\end{definition}

\begin{proof}[Proof of Theorem \ref{main_G_mod_corresp_thm}, parts (1) and (2)]
By additivity, the statements reduce formally to the case $n=m=1$. After these reductions, part (1) is Remark \ref{push_forward_unique_identity}, and part (2) is Lemma \ref{duals_compared}. 
\end{proof}

\subsection{$G$-descent}
To prove Theorem \ref{main_G_mod_corresp_thm}, it remains to establish the isogeny criterion. For this we will make use of the following lemma. In the application,  $F$ will  be the functor sending an abelian variety to its rational $\ell$-adic Tate module for a prime $\ell$. However, it is useful to consider the following general setup.  

\begin{lemma} \label{Additive functor lemma}
Let $E$ be a field of characteristic coprime to $|G|$, and let $F$ be an additive (covariant) functor from the category of abelian varieties over $k$ to the category of finite dimensional $E$-vector spaces.   Then, for each subgroup $H$ of $G$,  $F(\pi_H^*)$ induces an isomorphism
\begin{equation*}
  F(\Jac_{X/H})\stackrel{\sim}{\longrightarrow}F(\JX)^H.
\end{equation*} 
Moreover, given $S$, $S'$ and $\phi: \mathbb{Z}[S]\rightarrow \mathbb{Z}[S']$ as in the statement of Theorem \ref{main_G_mod_corresp_thm}, the diagram 
\[
\xymatrix{ \bigoplus_{j}F(\Jac_{X/H_j'})  \ar[d]_{\oplus_{j}F(\pi_{H_j'}^*)}\ar[r]^{F(f_\phi)}&\bigoplus_{i}F(\Jac_{X/H_i})   \ar[d]^{\oplus_{i}F(\pi_{H_i}^*)}  \\ \bigoplus_{j}F(\JX)^{H_j'}\ar[r]^{\phi^*} & \bigoplus_{i}F(\JX)^{H_i}}
\]   
commutes.
\end{lemma}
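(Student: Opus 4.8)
The plan is to first establish the isomorphism $F(\Jac_{X/H})\stackrel{\sim}{\to}F(\JX)^H$ for a single subgroup $H$, and then deduce the commutativity of the square from the defining relation \eqref{commut_diag_with_upper_star} of $f_\phi$. For the first part, I would exploit the idempotent $e_H=\frac{1}{|H|}\sum_{h\in H}h_*$, which makes sense in $\textup{End}(\JX)\otimes E$ since $\textup{char}(E)\nmid|G|$; applying the additive functor $F$ (after tensoring the endomorphism algebra with $E$, or simply noting $F$ lands in $E$-vector spaces) gives an idempotent $F(e_H)$ on $F(\JX)$ whose image is $F(\JX)^H$, because $F$ carries the isogeny decomposition $\JX\sim (\JX^H)^0\times(\text{complement})$ into a direct sum decomposition and $F$ kills no abelian variety up to the part seen by $F$ — more precisely, $F(e_H)$ is the projector onto the summand $F((\JX^H)^0)$, which I identify with $F(\JX)^H$ since $H$ acts trivially there and $e_H$ acts as the identity. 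Then Lemma \ref{prop:main_input_maps}(2) says $\pi_H^*:\Jac_{X/H}\to(\JX^H)^0$ is an isogeny, so $F(\pi_H^*)$ is an isomorphism onto $F((\JX^H)^0)=F(\JX)^H$; here I use that $F$ sends isogenies to isomorphisms, which follows from additivity applied to the relation $(\pi_H)_*\circ\pi_H^*=|H|$ on $(\JX^H)^0$ (both composites being multiplication by $|H|$, an automorphism of an $E$-vector space).

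For the commutativity of the square, by additivity it suffices to treat the case $n=m=1$, i.e. a single $\phi\in\textup{Hom}_G(\mathbb{Z}[G/H],\mathbb{Z}[G/H'])$. The defining property \eqref{commut_diag_with_upper_star} gives $\pi_H^*\circ f_\phi=\widetilde{\phi}\circ\pi_{H'}^*$ as maps $\Jac_{X/H'}\to\JX$, where $\widetilde{\phi}=\sum_{g}m_g g_*$ for a lift of $\phi(H)$. Applying $F$ and using that the vertical maps are the isomorphisms $F(\pi_H^*)$, $F(\pi_{H'}^*)$ onto the $H$- and $H'$-invariants, I get that $F(f_\phi)$ corresponds, under these identifications, to the restriction of $F(\widetilde{\phi})=\sum_g m_g F(g_*)$ to $F(\JX)^{H'}$. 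It then remains to check that this restriction agrees with $\phi^*$ as defined in Notation \ref{notat:G_mod_homs_invars}. This is a purely formal bookkeeping step: under the isomorphism $\textup{Hom}_G(\mathbb{Z}[G/H],M)\cong M^H$ of \eqref{eq:invariants_as_hom} applied to $M=F(\JX)$, precomposition with $\phi$ corresponds on invariants to the map induced by $\widetilde{\phi}$, since evaluating $f\circ\phi$ at the trivial coset of $G/H$ and writing $\phi(H)=\sum_g m_g\,gH'$ gives $\sum_g m_g f(gH')=\sum_g m_g g_*\cdot f(H')$ — exactly the action of $\widetilde{\phi}$ on the element of $M^{H'}$ corresponding to $f$.

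I expect the main obstacle to be pinning down the first claim cleanly, namely that $F$ of the idempotent $e_H$ (equivalently, $F$ of the isogeny $\pi_H^*$) really does have image $F(\JX)^H$ rather than just something isogenous to it. The subtle point is that $F$ is only assumed additive, not exact or faithful, so one cannot talk about kernels; the argument must be phrased entirely in terms of the isogeny decomposition $\JX\sim(\JX^H)^0\times C$ with $C$ a complementary abelian subvariety, the induced direct sum $F(\JX)\cong F((\JX^H)^0)\oplus F(C)$, and the observation that $g_*$ for $g\in H$ preserves this decomposition and acts as the identity on the first factor, so that $F(\JX)^H$ contains $F((\JX^H)^0)$; the reverse inclusion $F(\JX)^H\subseteq F((\JX^H)^0)$ comes from applying $F$ to the fact that $N_H=\sum_{h\in H}h_*$ is zero on $C$ up to isogeny (as $C\cap(\JX^H)^0$ is finite) together with the fact that $N_H$ is invertible on $F((\JX^H)^0)$. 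Once this is set up carefully, everything else is routine diagram-chasing and the double-coset/lift formalism already recorded in the paper.
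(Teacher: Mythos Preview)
Your approach is correct, and for the commutativity of the square you do exactly what the paper does: reduce by additivity to the case $n=m=1$ and then apply $F$ to the identity $\pi_H^*\circ f_\phi=\widetilde{\phi}\circ\pi_{H'}^*$ from \eqref{commut_diag_with_upper_star}. Your additional bookkeeping, unwinding why $\widetilde{\phi}$ restricted to invariants agrees with $\phi^*$, is a reasonable expansion of what the paper leaves implicit.

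For the first claim, however, you take a genuinely more laborious route than the paper. You construct an isogeny decomposition $\JX\sim(\JX^H)^0\times C$, argue that $F$ turns it into a direct sum, and then work to identify $F(\JX)^H$ with the first summand---which, as you yourself flag, requires care (choosing $C$ to be $H$-stable, checking the decomposition is $H$-equivariant, showing $F(C)^H=0$ via $N_H|_C=0$). The paper bypasses all of this. It simply observes that $F(\pi_H^*)$ lands in $F(\JX)^H$ (since $h_*\pi_H^*=\pi_H^*$ for $h\in H$), and that $F(\pi_H^*)$ and the restriction of $F((\pi_H)_*)$ to $F(\JX)^H$ compose in both directions to multiplication by $|H|$: one composite is $F(|H|\cdot\textup{id})$ on $F(\Jac_{X/H})$, the other is $F(N_H)=\sum_h F(h_*)$, which on $H$-invariants is again $|H|\cdot\textup{id}$. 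Since $|H|\in E^\times$, both maps are isomorphisms. This uses only Lemma \ref{prop:main_input_maps}(1) and never needs to split $\JX$ up to isogeny or to worry about whether $F$ of an inclusion is injective. Your approach would work, but the paper's two-line argument is cleaner and avoids exactly the obstacle you identified.
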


\begin{proof}
For the first claim, for any subgroup $H$ of $G$, it follows from Lemma \ref{prop:main_input_maps} that the maps $F(\pi_H^*)$ and $F((\pi_H)_*)$  induce $E$-linear maps between $F(\Jac_{X/H})$ and $F(\JX)^H$
whose composition in each direction is multiplication by $|H|$. Since the characteristic of $E$ is coprime to $|H|$ by assumption, each map is an isomorphism. 

Since $F$ is additive, commutativity of the diagram formally reduces to the case where $S=G/H$ and $S'=G/H'$ for subgroups $H$ and $H'$ of $G$.  This latter statement follows from applying $F$ to \eqref{commut_diag_with_upper_star}.  
\end{proof}

\begin{proof}[Proof of Theorem \ref{main_G_mod_corresp_thm}, part (3)]
Taking $F=V_{\ell}(-)$ in Lemma \ref{Additive functor lemma}, we find that the map on rational Tate modules induced by $f_\phi$ is an isomorphism. Thus $f_\phi$ is an isogeny.
\end{proof}

\begin{remark} \label{remark:Contravariant_functor_lemma}
In  the statement of Lemma \ref{Additive functor lemma}, suppose instead that $F$ is contravariant. Then for each subgroup $H$ of $G$, $F((\pi_H)_*)$ induces an isomorphism
\begin{equation*} \label{eq:second_inv_iso_functor}
  F(\Jac_{X/H})\stackrel{\sim}{\longrightarrow}F(\JX)^H,
\end{equation*} 
and the diagram 
\[
\xymatrix{ \bigoplus_{i}F(\Jac_{X/H_i})  \ar[d]_{\oplus_{i}F((\pi_{H_i})_*)}\ar[r]^{F(f_\phi^\vee)}&\bigoplus_{j}F(\Jac_{X/H_j'})   \ar[d]^{\oplus_{j}F((\pi_{H_j'})_*)}  \\ \bigoplus_{i}F(\JX)^{H_i}\ar[r]^{(\phi^\vee)^*} & \bigoplus_{j}F(\JX)^{H_j'}}
\]   
commutes. This can be proven similarly to the covariant case. Alternatively, one can deduce it by applying Lemma \ref{Additive functor lemma} to the covariant functor $F\circ \vee$.
\end{remark}

\section{Rational points, Selmer groups and Shafarevich--Tate groups}
In this section  we prove some results concerning the arithmetic of Jacobians of  curves with an action of a finite group $G$. As usual, Convention \ref{convention} is in place. 

 Some of the results in this section hold more generally for principally polarised abelian varieties, so we expand to this generality where appropriate. The main results of the section are summarised in Theorem \ref{Galois_descent_rational_points} below, whose  proof occupies Sections \ref{sec:isotypic} and  \ref{Sec:Selmer Groups}. Section \ref{sec:height_pairings_background} gives a complement to Theorem \ref{Galois_descent_rational_points} concerning height pairings. Finally, in Sections \ref{sec:TMG} and \ref{sec:deficiency_secs}, we give some auxiliary results concerning the $G$-action on differentials, and the expected order of Shafarevich--Tate groups modulo squares. 

\begin{notation}
For an abelian variety $A$ over a number field $K$, and for an integer $n\geq 2$, we denote by $\textup{Sel}_n(A)$ the $n$-Selmer group of $A$. For a prime $p$, we write $\textup{Sel}_{p^\infty}(A)=\varinjlim_n \textup{Sel}_{p^n}(A)$ to denote the $p^{\infty}$-Selmer group of $A$. We also write   
\[\mathcal{X}_{p}(A)= \textup{Hom}_{\mathbb{Z}_{p}}(\textup{Sel}_{p^\infty}(A), \mathbb{Q}_{p}/\mathbb{Z}_{p}) \otimes_{\mathbb{Z}_{p}} \mathbb{Q}_{p}.\] 
We denote by $\Sha(A)$ the Shafarevich--Tate group of $A$.
\end{notation}

\begin{theorem} \label{Galois_descent_rational_points}
\label{Proposition: pseudo Brauer for Xp} \label{cor:selfdual}  \label{Corollary: finiteness of sha} 
Let $X$ be a curve over a number field $K$, and $G$ a finite group acting on $X$ by $K$-automorphisms. Then we have:
\begin{enumerate}
\item The quotient morphism $\pi:X \to X/G$ induces a $K$-homomorphism $\pi^{*}:\textup{Jac}_{X/G}(K) \to \textup{Jac}_{X}(K)^{G}$ with finite kernel and cokernel, and an isomorphism
\begin{equation} \label{rat_point_quot}
(\textup{Jac}_{X}(K)\otimes\mathbb{Q})^{G} \cong \textup{Jac}_{X/G}(K) \otimes \mathbb{Q}.
\end{equation}
\item Let $p$ be a prime. Then $\mathcal{X}_{p}(\textup{Jac}_{X})$ is a self-dual $\mathbb{Q}_{p}[G]$-representation, and 
\begin{equation} \label{selmer_quot}
\mathcal{X}_{p}(\JX)^{G} \cong \mathcal{X}_{p}(\Jac_{X/G}).
\end{equation}
\item If $\sha(\JX)[p^\infty]$ is finite for a prime $p$, then so is $\sha(\Jac_{X/G})[p^\infty].$ If $\sha(\JX)$ is finite then so is $\sha(\Jac_{X/G}).$
\item If $\rho$ is a $\C[G]$-representation such that $\langle  V_{\ell}(\JX), \rho \rangle =0$, then \[\langle \textup{Jac}_{X}(K) \otimes \mathbb{Q}, \rho \rangle =0.\] Moreover,  for each prime $p$, we have $\langle \mathcal{X}_{p}(\JX), \rho \rangle =0$. \newline  (Here  $\left \langle ~,~\right \rangle$ denotes the standard inner product on $\mathbb{C}[G]$-representations.)
\end{enumerate}
\end{theorem}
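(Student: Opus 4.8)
The plan is to treat the four parts in order; (1) and (3) follow fairly formally from the push--pull formalism of Section \ref{sec:Homomorphisms_between_jacobians} together with standard properties of Selmer groups, whereas the self-duality in (2) carries the essential new content. \emph{For part (1)}, writing $\pi=\pi_{G}$, Lemma \ref{prop:main_input_maps} shows the image of $\pi^{*}$ lies in $(\JX^{G})^{0}$, so $\pi^{*}$ carries $\Jac_{X/G}(K)$ into $\JX(K)^{G}$; the relation $\pi_{*}\pi^{*}=|G|$ of Proposition \ref{pull_back_pushforward_prop_quasi_nice} bounds $\ker\pi^{*}$ by $\Jac_{X/G}[|G|]$, and for $y\in\JX(K)^{G}$ the same proposition gives $\pi^{*}(\pi_{*}y)=\sum_{g}g_{*}y=|G|y$, so the cokernel is killed by $|G|$, hence finite since $\JX(K)^{G}$ is finitely generated by Mordell--Weil. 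Tensoring with $\Q$ then gives the displayed isomorphism, because $(-)^{G}$ commutes with $-\otimes_{\Z}\Q$; this isomorphism is also the special case $F=(-)(K)\otimes\Q$, $H=G$ of Lemma \ref{Additive functor lemma}.

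\emph{For part (2)}, I would first obtain the isomorphism $\cX_{p}(\JX)^{G}\cong\cX_{p}(\Jac_{X/G})$ by noting that $A\mapsto\cX_{p}(A)$ is an additive \emph{contravariant} functor from abelian varieties over $K$ to finite-dimensional $\Q_{p}$-vector spaces (covariant and additive on $\textup{Sel}_{p^{\infty}}$, contravariant on passing to Pontryagin duals), and applying the contravariant form of Lemma \ref{Additive functor lemma} recorded in Remark \ref{remark:Contravariant_functor_lemma}, with $H=G$. The self-duality is the crux, and I expect it to be the main obstacle. One first reduces to principally polarised abelian varieties: $\JX$ is principally polarised by Lemma \ref{is_an_ab_var}, and every $g\in G$ acts through an automorphism $g_{*}$ preserving the canonical polarisation $\lambda_{X}$, since combining $g^{*}=g_{*}^{-1}$ (Remark \ref{g_and_dual_inverse_rmk}) with $g^{*}=\lambda_{X}^{-1}g_{*}^{\vee}\lambda_{X}$ (Lemma \ref{lem:fstar_duality}) yields $g_{*}^{\vee}\lambda_{X}g_{*}=\lambda_{X}$. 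It then suffices to show $\cX_{p}(A)$ is a self-dual $\Q_{p}[G]$-representation whenever $A/K$ is principally polarised and $G$ acts by polarisation-preserving $K$-automorphisms. For this I would follow T. and V. Dokchitser \cite{dokchitser_dokchitser_2009}: from the Weil pairing $A[p^{n}]\times A^{\vee}[p^{n}]\to\mu_{p^{n}}$ (with $A\cong A^{\vee}$ via the polarisation), cup product in Galois cohomology, the local invariant maps and global reciprocity, one assembles a pairing on $\textup{Sel}_{p^{n}}(A)$; in the limit, after $\otimes\Q_{p}$, this gives a non-degenerate $\Q_{p}$-bilinear form on $\cX_{p}(A)$, non-degeneracy being its Poitou--Tate/Cassels--Tate content. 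The delicate point is that all these ingredients are functorial, so the form is automatically $G$-invariant precisely because $G$ preserves the polarisation; this $G$-invariance is what identifies $\cX_{p}(A)$ with its $\Q_{p}[G]$-dual.

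\emph{For part (3)}, if $\Sha(\JX)[p^{\infty}]$ is finite then the exact sequence $0\to\JX(K)\otimes\Q_{p}/\Z_{p}\to\textup{Sel}_{p^{\infty}}(\JX)\to\Sha(\JX)[p^{\infty}]\to0$ gives $\cX_{p}(\JX)\cong\JX(K)\otimes\Q_{p}$ as $\Q_{p}[G]$-modules, so parts (1) and (2) yield $\dim\cX_{p}(\Jac_{X/G})=\dim\cX_{p}(\JX)^{G}=\dim(\JX(K)\otimes\Q_{p})^{G}=\dim(\Jac_{X/G}(K)\otimes\Q)$; comparing with the general identity $\dim\cX_{p}(\Jac_{X/G})=\rk\Jac_{X/G}(K)+\operatorname{corank}_{p}\Sha(\Jac_{X/G})$ forces the corank to vanish. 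For the unconditional statement, $\Jac_{X/G}$ is isogenous to the abelian subvariety $(\JX^{G})^{0}\subseteq\JX$ by Lemma \ref{prop:main_input_maps}, so by Poincar\'e reducibility $\JX\sim(\JX^{G})^{0}\times B$ for some $B$; since finiteness of $\Sha$ is an isogeny invariant and $\Sha$ of a product is a product, finiteness of $\Sha(\JX)$ gives that of $\Sha(\Jac_{X/G})$.

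\emph{For part (4)}, let $\rho$ be irreducible with $\langle V_{\ell}(\JX),\rho\rangle=0$; realise $\rho$ over $\overline{\Q_{p}}$, so that multiplicities of $\rho$ are computed by the exact functor $\operatorname{Hom}_{\overline{\Q_{p}}[G]}(\rho,-\otimes_{\Q_{p}}\overline{\Q_{p}})$. Since the character of $V_{\ell}(A)$ is independent of $\ell$, we get $\operatorname{Hom}_{\overline{\Q_{p}}[G]}(\rho,V_{p}(\JX)\otimes\overline{\Q_{p}})=0$; and since this functor commutes with $H^{1}(K,-)$ (as $|G|$ is invertible), $H^{1}(K,V_{p}(\JX))$ has trivial $\rho$-part. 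Now both $\JX(K)\otimes\Q_{p}$ and $\cX_{p}(\JX)$ embed into $H^{1}(K,V_{p}(\JX))$ up to matching $G$-character: indeed $\JX(K)\otimes\Z_{p}=\varprojlim_{n}\JX(K)/p^{n}$ and $\varprojlim_{n}\textup{Sel}_{p^{n}}(\JX)$ both inject into $\varprojlim_{n}H^{1}(K,\JX[p^{n}])=H^{1}(K,T_{p}\JX)$ (using $\varprojlim^{1}H^{0}(K,\JX[p^{n}])=0$), and after $\otimes\Q_{p}$ the second has the same $G$-character as $\cX_{p}(\JX)$ by standard global duality for Selmer groups. Hence $\langle\JX(K)\otimes\Q,\rho\rangle=\langle\cX_{p}(\JX),\rho\rangle=0$. (Alternatively, the vanishing for rational points follows from the $G$-equivariant embedding $\JX(K)\otimes\Q\hookrightarrow\JX(K_{v})\otimes\Q$ at a place $v\mid p$, whose target has $G$-character a multiple of that of $\operatorname{Lie}\JX\cong\Omega^{1}(X)^{\vee}$ by Proposition \ref{iso_of_diffs_vs_jac_prop}, a Hodge summand of $V_{\ell}(\JX)\otimes\C$.)
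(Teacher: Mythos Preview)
Your arguments for part (1), the isomorphism $\mathcal{X}_p(\JX)^G \cong \mathcal{X}_p(\Jac_{X/G})$ in (2), and part (3) are essentially those of the paper and are correct.

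However, your proposed proof of self-duality in part (2) has a genuine gap. The pairing you describe---assembled from the Weil pairing, cup product, local invariants, and global reciprocity---is identically \emph{zero} on Selmer classes, not non-degenerate. Under local Tate duality the Kummer image $A(K_v)/p^n \hookrightarrow H^1(K_v, A[p^n])$ is exactly the annihilator of $A^\vee(K_v)/p^n \hookrightarrow H^1(K_v, A^\vee[p^n])$, so two classes satisfying the Selmer condition pair to zero at every place and hence globally. (The Cassels--Tate pairing is a secondary construction living on $\Sha$, not on $\textup{Sel}$, and it vanishes on divisible elements, so it too gives nothing on $\mathcal{X}_p$.) This is precisely why self-duality of $\mathcal{X}_p$ is subtle. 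The actual argument, both in the paper (Theorem~\ref{Theorem:Self-duality_selmer}) and in \cite{dokchitser_dokchitser_2009} which you cite, is quite different: for each $\mathbb{Q}_p$-irreducible $\tau$ one approximates a suitable element of $\mathbb{Z}_p[G]$ by an element $\sigma\in\mathbb{Z}[G]$ giving a self-isogeny $\phi_\sigma$ of $A$ whose action on the lattice $X_p(A)$ scales the $\tau$-isotypic piece by an extra factor of $p$, and whose dual $\phi_\sigma^\vee$ does the same to the $\tau^*$-piece; the identity $Q(\phi_\sigma)=Q(\phi_\sigma^\vee)$ from \cite{MR2680426} then forces the multiplicities $n_\tau$ and $n_{\tau^*}$ to agree.

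For part (4), your approach can be salvaged but contains an imprecision: $\varprojlim_n \textup{Sel}_{p^n}(A)\otimes\mathbb{Q}_p$ and $\mathcal{X}_p(A)$ are naturally \emph{dual} as $\mathbb{Q}_p[G]$-modules (Pontryagin duality contragredients the $G$-action), not isomorphic, so your embedding argument only shows $\langle \rho^*, \mathcal{X}_p(\JX)\rangle=0$; you must then invoke either the self-duality of $V_\ell(\JX)$ or part (2) to finish. The paper's route (Proposition~\ref{Proposition: pseudo Brauer for Xp(A)}) is shorter and avoids cohomology altogether: after reducing to $\rho$ realisable over $\mathbb{Q}$, an integer multiple $ne_\rho\in\mathbb{Z}[G]$ of the $\rho$-idempotent acts as zero on $V_\ell(\JX)$, hence is the zero endomorphism of $\JX$ by faithfulness of $\textup{End}(\JX)\hookrightarrow\textup{End}(V_\ell\JX)$, and therefore acts as zero on $\JX(K)\otimes\mathbb{Q}$ and on $\mathcal{X}_p(\JX)$.
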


\begin{proof}  
(1): that $\pi^*$ induces an isomorphism as in \eqref{rat_point_quot} follows from Lemma \ref{Additive functor lemma}, which implies the rest of the part. (2): for self duality, combine Remark \ref{g_and_dual_inverse_rmk} with Theorem \ref{Theorem:Self-duality_selmer} below. The isomorphism \eqref{selmer_quot} follows from Lemma \ref{Additive functor lemma}. (3): The claim about $p$-primary parts follows from $(1)$ and $(2)$. The claim about the full Shafarevich--Tate group follows from this, after noting that  $\pi^*: \textup{Jac}_{X/G}\rightarrow \textup{Jac}_{X}$ induces an isomorphism $\Sha(\textup{Jac}_{X/G})[p^\infty]\cong \Sha(\textup{Jac}_X)^G[p^\infty]$ for each prime $p\nmid |G|$ (cf.  Proposition \ref{pull_back_pushforward_prop_quasi_nice}). 
 (4): follows from Proposition \ref{Proposition: pseudo Brauer for Xp(A)} below.
\end{proof}

\subsection{Isotypic components} \label{sec:isotypic} 
Let $A$ be an abelian variety over a number field $K$ and let $G$ be a finite group acting on $A$ by $K$-automorphisms. The following lemma shows that if a given complex representation $\rho$ of $G$ does not appear in the $\ell$-adic Tate module of $A$ for some (equivalently any) prime $\ell$, then it does not appear in various $G$-representations functorially associated to $A$.

\begin{proposition}  \label{Proposition: pseudo Brauer for Xp(A)} 
 Let $\rho$ be a $\C[G]$-representation such that $\langle   \rho, V_{\ell}(A) \rangle =0$ for some prime $\ell$. Then, for each prime $p$, we have
 \[\langle \rho, \mathcal{X}_{p}(A) \rangle = \langle \rho, A(K) \otimes \mathbb{Q}  \rangle =0.\]
\end{proposition}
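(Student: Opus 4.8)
The plan is to reduce the statement about $\mathcal{X}_p(A)$ and $A(K)\otimes\mathbb{Q}$ to a statement about $V_\ell(A)$ by finding, for a suitable prime $\ell$, a $\mathbb{Q}_\ell[G]$-equivariant injection of each of these representations into (a sum of copies of) $V_\ell(A)$. The key point is that $\langle \rho, V_\ell(A)\rangle$ does not depend on $\ell$ — the character of $V_\ell(A)$ is rational-valued and independent of $\ell$ (it is the character of the $\ell$-adic representation attached to $A$, whose trace of Frobenius lies in $\mathbb{Z}$ by Weil), so if $\rho$ is absent for one $\ell$ it is absent for all. Thus it suffices to exhibit, for a single convenient $\ell$, the desired embedding.

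First I would treat $A(K)\otimes\mathbb{Q}$. There is a natural $G$-equivariant injection $A(K)\otimes\mathbb{Z}_\ell \hookrightarrow H^1_{f}(K, T_\ell A)$ (or directly into $\varprojlim A(K)/\ell^n$), but the cleanest route is: $A(K)\otimes\mathbb{Q}_\ell$ injects $G$-equivariantly into the $\ell$-adic Selmer group, so it is enough to handle $\mathcal{X}_p(A)$. For the Selmer group, I would use the fact that $\mathrm{Sel}_{p^n}(A)$ sits in an exact sequence $0\to A(K)/p^n \to \mathrm{Sel}_{p^n}(A)\to \Sha(A)[p^n]\to 0$, and taking the limit and dualising, $\mathcal{X}_p(A)$ is a subquotient built from $A(K)\otimes\mathbb{Q}_p$ and (the cofree part of) $\Sha$. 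The honest approach, following the method of T. and V. Dokchitser, is to note that $\mathrm{Sel}_{p^\infty}(A)$ injects $G$-equivariantly into $H^1(K, A[p^\infty])$ and hence, after a finite extension trivialising things appropriately, that $\mathcal{X}_p(A)$ embeds $G$-equivariantly into a sum of copies of $H^1(K_v, \cdot)$ pieces which are in turn controlled by $V_p(A)$; concretely, $\mathcal{X}_p(A)$ is a subquotient of a $\mathbb{Q}_p[G]$-module all of whose Jordan–Hölder constituents appear in $V_p(A)$ (using that the constituents of $H^1(K, V_p A)$-type modules are among those of $V_p(A)$ up to twist, and the twist is irrelevant at the level of which irreducible $G$-representations occur since $G$ acts through automorphisms of $A$, not through Galois). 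So $\langle \rho, V_p(A)\rangle = 0$ forces $\langle\rho, \mathcal{X}_p(A)\rangle=0$.

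More precisely, the argument I would write is: the $G$-action on $A$ commutes with the $G_K$-action, so $\mathrm{Sel}_{p^n}(A)$ is a $\mathbb{Z}_p[G]$-module, and passing to the limit and applying $\mathrm{Hom}(-,\mathbb{Q}_p/\mathbb{Z}_p)\otimes\mathbb{Q}_p$ gives $\mathcal{X}_p(A)$ as a $\mathbb{Q}_p[G]$-module. There is a $G$-equivariant surjection from (the dual of) a submodule of $\prod_v H^1(K_v, A[p^\infty])$-type data onto a module containing $\mathcal{X}_p(A)$ as a subquotient; each local $H^1(K_v, V_p A)$ has, as a $\mathbb{Q}_p[G]$-module, all its irreducible constituents among those of $V_p(A)$ (for $v\nmid p\infty$ this is because $H^1(K_v, V_pA)\cong V_p(A)^{\mathrm{Frob}_v=?}$-type quotient by local Tate duality and the fact that $H^0, H^2$ are built from $V_p(A)$; for $v\mid p$ one uses that the Bloch–Kato local conditions cut out subspaces, and for $v\mid\infty$ the contribution vanishes after $\otimes\mathbb{Q}_p$). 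Hence $\mathcal{X}_p(A)$, being a subquotient, also has all constituents among those of $V_p(A)$. The main obstacle is making the local argument uniform and genuinely $G$-equivariant — in particular being careful that $G$ acts on everything through its action on $A$ (fixed once and for all, defined over $K$) and not through Galois, so that twisting by characters of $G_K$ never changes the $G$-module structure; and handling the places $v\mid p$ where the Selmer condition is the Bloch–Kato condition rather than something naive. Once that is set up, invariance of the virtual character of $V_\ell(A)$ in $\ell$ lets us pass from the hypothesis at the given $\ell$ to the prime $p$ (and to $\ell=$ anything for the rational points claim), completing the proof.
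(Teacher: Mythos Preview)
Your approach is substantially different from the paper's, and considerably more complicated. The paper gives a one-line argument based on a single structural fact: the functor $A \mapsto T_\ell(A)$ is faithful on endomorphisms, i.e.\ the natural map $\mathrm{End}(A) \hookrightarrow \mathrm{End}(T_\ell A)$ is injective. Here is the argument. Since the character of $V_\ell(A)$ is integer-valued, one reduces (by summing $\rho$ over its Galois conjugates) to the case that $\rho$ is realisable over $\mathbb{Q}$. Let $e_\rho \in \mathbb{Q}[G]$ be the idempotent projecting onto the $\rho$-isotypic component, and choose $n \in \mathbb{Z}_{>0}$ with $ne_\rho \in \mathbb{Z}[G]$. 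The hypothesis says that $ne_\rho$ acts as zero on $V_\ell(A)$; by faithfulness, $ne_\rho$ is the zero endomorphism of $A$. Hence $ne_\rho$ acts as zero on any module obtained functorially from $A$, in particular on $\mathcal{X}_p(A)$ and $A(K)\otimes\mathbb{Q}$, and dividing by $n$ gives the conclusion. No cohomology, no change of prime, no local analysis.

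Your proposal, by contrast, attempts to realise $\mathcal{X}_p(A)$ as a subquotient of a $\mathbb{Q}_p[G]$-module built out of local cohomology groups $H^1(K_v, V_pA)$, and then to argue that the $G$-constituents of the latter are bounded by those of $V_p(A)$. As written, this has genuine gaps. First, your opening plan --- a $G$-equivariant injection of $A(K)\otimes\mathbb{Q}_\ell$ or $\mathcal{X}_p(A)$ into copies of $V_\ell(A)$ --- does not exist in general; the rational points simply do not sit inside the Tate module. Second, the Selmer group is a subgroup of the \emph{global} $H^1(K, A[p^\infty])$, not a subquotient of a product of local $H^1$'s, so your case-by-case local analysis (splitting into $v\nmid p\infty$, $v\mid p$, $v\mid\infty$) is not the right organising principle. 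The correct statement underlying your idea is simply that $G$ commutes with $G_K$ on $V_p(A)$, so the isotypic decomposition $V_p(A) = \bigoplus_i W_i \otimes M_i$ passes to $H^1_f(K, V_pA) = \bigoplus_i W_i \otimes H^1_f(K, M_i)$; combined with the identification of $\mathcal{X}_p(A)$ with (the dual of) a Bloch--Kato Selmer group and the self-duality of $V_p(A)$ as a $G$-module (again from rationality of its character), this would complete your argument. But none of this is actually carried out in your proposal, and once one observes that the real content is ``the $G$-action on everything factors through $\mathrm{End}(A)$'', the paper's argument via faithfulness is both shorter and more transparent.
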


\begin{proof} $V_{\ell}(A)$ has rational character by \cite[Theorem 1, Chapter 25]{milne1998lectures}. We denote by $F_{\rho}$  the field obtained by extending $\mathbb{Q}$ by the values of the characters of $\rho$. Then, a positive power of the representation $\bigoplus_{\sigma \in \textup{Gal}(F_{\rho}/\mathbb{Q})} \rho^{\sigma}$ is realisable over $\mathbb{Q}$ and shares no common constituent with $V_{\ell}(A)$. It therefore suffices to consider the case where $\rho$ is realisable over $\mathbb{Q}$. Denote by $e_{\rho}$  the idempotent in $\mathbb{Q}[G]$ which projects onto the $\rho$-isotypic part. Clearing denominators, there exists a positive integer $n$ for which $n e_{\rho}$ lies in $\mathbb{Z}[G]$. Viewed as an endomorphism of $A$, $n e_{\rho}$ induces the zero map on $T_{\ell}(A)$. In view of \cite[Proposition 12.2]{MR861974}, $n e_{\rho}$ is the zero endomorphism on $A$, and therefore induces the zero map on $\mathcal{X}_{p}(A)$ and $A(K) \otimes \mathbb{Q}$.
\end{proof}

\begin{remark}
Suppose that  $A$ is an abelian variety over a field $k$ of characteristic $0$, equipped with the action of a finite group $G$. The same proof shows that, if $\langle \rho, V_{\ell}(A)\rangle =0$ for some {$\mathbb{C}[G]$-representation} $\rho$, then $\langle \rho , \Omega^1(A) \rangle =0$ where $\Omega^{1}(A)$ is the $k$-vector space of regular differentials.
\end{remark}

\subsection{Self-duality of Selmer groups}
\label{Sec:Selmer Groups}

Let $K$ be a number field. In what follows, the pair $(A, \lambda)$ will denote an abelian variety $A/K$, equipped with a principal polarisation $\lambda$ defined over $K$. We denote by $\textup{Aut}(A,\lambda)$ the group of $K$-automorphisms $g$ of $A$ preserving the polarisation (i.e.~for~which $g^{\vee} =\lambda g^{-1} \lambda^{-1}$). 
The proof of the following result is modeled on \cite[Theorem 2.1]{dokchitser_dokchitser_2009}.
\begin{theorem}
\label{Theorem:Self-duality_selmer} Let $(A,\lambda)$ be a principally polarised abelian variety, and let $G$ be a subgroup of $\textup{Aut}(A,\lambda)$. Then $\mathcal{X}_{p}(A)$ is self-dual as a $\mathbb{Q}_{p}[G]$-representation.
\end{theorem}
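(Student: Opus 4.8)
The plan is to follow the Dokchitsers' strategy from \cite[Theorem 2.1]{dokchitser_dokchitser_2009}, which establishes the Galois-module case, and to observe that their argument only uses formal properties of Selmer groups that remain valid when one enlarges the acting group from $G_K$ to $\textup{Aut}(A,\lambda)$. The key input is the existence of the \emph{Cassels--Tate pairing}: for each $n$ there is a perfect, $G_K$-equivariant pairing on the $p^n$-Selmer group of $A$ with values in $\mathbb{Z}/p^n\mathbb{Z}$, which is (anti)symmetric and is built functorially from the polarisation $\lambda$ (via the identification $A\cong A^\vee$ it induces). Passing to the limit over $n$ and then applying $\textup{Hom}_{\mathbb{Z}_p}(-,\mathbb{Q}_p/\mathbb{Z}_p)\otimes\mathbb{Q}_p$, one obtains a perfect $\mathbb{Q}_p$-bilinear pairing on $\mathcal{X}_p(A)$. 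Concretely I would write $\langle\,,\,\rangle_{\mathrm{CT}}$ for this limiting pairing and record that it identifies $\mathcal{X}_p(A)$ with its $\mathbb{Q}_p$-linear dual.

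The new point is equivariance for the larger group $G=\textup{Aut}(A,\lambda)$. Each $g\in G$ acts on $A$, hence functorially on $\textup{Sel}_{p^n}(A)$, on $\textup{Sel}_{p^\infty}(A)$, and on $\mathcal{X}_p(A)$. Because the Cassels--Tate pairing is constructed naturally from $A$ together with the isomorphism $\lambda:A\xrightarrow{\sim}A^\vee$, functoriality of the construction gives, for $x,y\in\mathcal{X}_p(A)$ and $g\in G$,
\[
\langle g x,\, g y\rangle_{\mathrm{CT}} \;=\; \langle (\lambda^{-1} g^\vee \lambda)\,x,\; y\rangle_{\mathrm{CT}}.
\]
Here $g^\vee$ denotes the dual isogeny and the formula is just the statement that the pairing is compatible with morphisms of polarised abelian varieties. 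Since $g$ preserves $\lambda$, i.e. $g^\vee=\lambda g^{-1}\lambda^{-1}$, the operator $\lambda^{-1}g^\vee\lambda$ equals $g^{-1}$, so $\langle gx, gy\rangle_{\mathrm{CT}}=\langle g^{-1}x, y\rangle_{\mathrm{CT}}$. Replacing $y$ by $gy$ gives $\langle gx, gy\rangle_{\mathrm{CT}} = \langle x, gy\rangle \cdot(\text{shift})$—more cleanly, the displayed identity rearranges to $\langle gx, y\rangle_{\mathrm{CT}} = \langle x, g^{-1}y\rangle_{\mathrm{CT}}$, which is exactly the statement that $\langle\,,\,\rangle_{\mathrm{CT}}$ is a $G$-invariant perfect pairing on $\mathcal{X}_p(A)$. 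A perfect $G$-invariant bilinear form on a finite-dimensional $\mathbb{Q}_p[G]$-representation furnishes an isomorphism with the contragredient, so $\mathcal{X}_p(A)\cong\mathcal{X}_p(A)^\vee$ as $\mathbb{Q}_p[G]$-representations, as claimed.

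For the write-up I would first isolate the construction and properties of the Cassels--Tate pairing that I need (perfectness after $\otimes\mathbb{Q}_p$, bilinearity, and functoriality in the polarised abelian variety), citing \cite{dokchitser_dokchitser_2009} and the standard references therein, then carry out the short representation-theoretic deduction above. The only genuine subtlety—and the step I expect to be the main obstacle in presentation—is pinning down precisely the functoriality statement for the Cassels--Tate pairing under an endomorphism of $A$ compatible with $\lambda$, and checking that the sign/(anti)symmetry issues and the passage to the limit and to $\mathcal{X}_p$ do not disturb the conclusion; once that naturality is cleanly stated, the invariance under $G\le\textup{Aut}(A,\lambda)$ is immediate. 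It is worth remarking, as the paper does, that self-duality here is weaker than rationality of $\mathcal{X}_p(A)$, which would follow from finiteness of $\Sha$; the pairing argument is exactly what lets one bypass that.
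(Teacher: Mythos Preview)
There is a genuine gap in your approach: the perfect pairing you invoke does not exist. The Cassels--Tate pairing lives on $\Sha(A)\times\Sha(A^\vee)$, not on Selmer groups, and even there its kernel is precisely the maximal divisible subgroup of $\Sha(A)[p^\infty]$. Now $\mathcal{X}_p(A)$ has $\mathbb{Q}_p$-dimension equal to $\mathrm{rk}\,A(K)$ \emph{plus} the corank of $\Sha(A)[p^\infty]_{\mathrm{div}}$; the second summand is exactly the piece on which any Cassels--Tate or Flach-type pairing degenerates. So there is no natural perfect bilinear form on $\mathcal{X}_p(A)$ to which your invariance argument could be applied. (If $\Sha(A)[p^\infty]$ were known to be finite, $\mathcal{X}_p(A)\cong A(K)\otimes\mathbb{Q}_p$ and the N\'eron--Tate height would do the job; the whole difficulty is to avoid assuming this.) Your assertion that ``for each $n$ there is a perfect \ldots pairing on the $p^n$-Selmer group'' is simply false.

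You also misidentify the Dokchitsers' strategy. Their proof of \cite[Theorem~2.1]{dokchitser_dokchitser_2009}, which the paper adapts, is \emph{not} a pairing argument. It is an isogeny-invariance argument: for each $\mathbb{Q}_p$-irreducible $\tau$ one cooks up an element $\sigma\in\mathbb{Z}[G]$ close to a suitable idempotent so that the self-isogeny $\phi_\sigma$ of $A$ acts on the $\tau$-isotypic lattice in $X_p(A)$ by (roughly) multiplication by $p$ and on the other isotypic pieces by a unit. Since $G\leq\mathrm{Aut}(A,\lambda)$, the dual isogeny $\phi_\sigma^\vee$ corresponds to $\sum x_g g^{-1}$, which plays the same role for $\tau^*$. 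One then invokes the deep fact (isogeny invariance of the BSD quotient, \cite[Theorem~4.3]{MR2680426}) that $Q(\phi_\sigma)=Q(\phi_\sigma^\vee)$; comparing the two expressions forces $n_\tau=n_{\tau^*}$. The hypothesis $g^\vee=\lambda g^{-1}\lambda^{-1}$ enters exactly here, to identify $\phi_\sigma^\vee$ with the ``inverted'' group-ring element, not to make a pairing $G$-invariant.
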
 

\begin{proof} Following \cite{MR2680426}, given a $K$-isogeny of abelian varieties $f:A \to B$, we write 
\begin{equation*}  \begin{split} Q(f) = \#\text{coker}(f:A(K)/A(K)_{\text{tors}} \to B(K)/B(K)_{\text{tors}} )
\cdot  \# \text{ker}(f: \Sha(A)_{\text{div}} \to \Sha(B)_{\text{div}}),\end{split} \end{equation*}
where $\Sha_{\text{div}}$ denotes the divisible part of $\Sha.$  We decompose $\mathcal{X}_{p}(A)$ into
$\mathbb{Q}_{p}$-irreducible representations, and decompose the corresponding $\mathbb{Z}_{p}$-lattice 
\[X_{p}(A)= \textup{Hom}_{\mathbb{Z}_{p}}(\varinjlim_{n} \textup{Sel}_{p^n}(A), \mathbb{Q}_{p}/\mathbb{Z}_{p})\] into $G$-stable $\mathbb{Z}_{p}$-sublattices such that \[ \mathcal{X}_{p}(A) \cong \bigoplus_{i} \tau_{i}^{n_{\tau_{i}}}, \quad X_{p}(A) \cong \bigoplus_{i} \Lambda_{\tau_i} \quad \textup{and} \quad\Lambda_{\tau_i} \otimes_{\mathbb{Z}_{p}} \mathbb{Q}_{p} \cong  \tau_{i}^{n_{\tau_{i}}},\]  where the $\tau_{i}$ are distinct $\mathbb{Q}_{p}$-irreducible representations of $G$.
We construct a self-isogeny of $A$ with a specified action on the sublattices $\Lambda_{\tau_{i}}$ of $X_{p}(A)$. For an irreducible $\mathbb{Q}_{p}[G]$-representation $\tau \in \{\tau_{i}\}$, the operator ${\textup{dim}(\tau)} \sum_{g \in G} \textup{Tr}(\tau(g))g^{-1}) \in \mathbb{Z}_{p}[G]$ kills every irreducible $G$-constituent $\tau'$ of $\mathcal{X}_{p}$ not isomorphic to $\tau$, and acts via multiplication by $\langle \tau, \tau \rangle \cdot |G|$ on every constituent of $\mathcal{X}_{p}$ which is isomorphic to $\tau$. We define
 \[z_{\tau}:=|G|\cdot \langle \tau , \tau \rangle + ({p-1}) \cdot \textup{dim}(\tau)\cdot \sum_{g \in G} \text{Tr}(\tau(g))g^{-1} \in \mathbb{Z}_{p}[G]. \] 
Elements in a small neighbourhood $U_{\tau}$ of $z_{\tau}$ in $\mathbb{Z}_{p}[G]$  act via multiplication by $p\cdot |G| \cdot \langle \tau, \tau \rangle$ composed with some isomorphism on $\Lambda_{\tau},$ and via multiplication by $|G|\cdot \langle \tau, \tau \rangle$ composed with some isomorphism on $\Lambda_{\tau'}$ when $\tau' \neq \tau$. Similarly, there's a corresponding neighbourhood $U_{\tau ^{*}}$ of $z_{\tau^{*}}$, where $\tau^{*}$ is the dual representation. Since the inversion map $g \mapsto g^{-1}$ is continuous in $\mathbb{Q}_{p}[G]$ and maps $z_{\tau}$ to $z_{\tau^{*}}$, we can find $\sigma =\sum_{g \in G} x_{g}g\in \mathbb{Z}[G] \cap U_{\tau}$ such that $\sum_{g\in G} x_{g} g^{-1}$ lies in $ U_{\tau^{*}}.$ We write $\phi_{\sigma}$ to denote the endomorphism of $A$ determined by $\sigma$. Since $G$ is a subgroup of $\textup{Aut}(A,\lambda)$, then $ \sum_{g\in G} x_{g} g^{-1}$ corresponds to the dual isogeny $\phi_{\sigma}^{\vee}$. By considering their induced actions on the $\Lambda_{\tau_{i}}$, we deduce that 
\begin{equation*} Q(\phi_{\sigma}) = (|G|\cdot \langle \tau,\tau \rangle)^{\rk_p A}\cdot p^{n_{\tau} \, \text{dim}(\tau)}\quad\textup{ and }\quad {Q}(\phi_{\sigma}^{\vee}) =(|G|\cdot \langle\tau^{*}, \tau^{*} \rangle)^{\rk_p A}\cdot p^{n_{\tau^{*}} \, \text{dim}(\tau^{*})}. \end{equation*}
Since $\langle {\tau},\tau \rangle= \langle {\tau}^{*},\tau^{*} \rangle, \textup{dim}(\tau)=\textup{dim}(\tau^{*})$ and ${Q}(\phi_{\sigma})={Q}(\phi_{\sigma}^{\vee})$  (see \cite[Theorem 4.3]{MR2680426}), we deduce that $n_{\tau}= n_{\tau^{*}}$. This holds for all $\tau \in \{\tau_{i}\}$, and so the result follows. 
\end{proof}

\subsection{Height pairings}  \label{sec:height_pairings_background}
Let $K$ be a number field and $A/K$ an abelian variety. For a line bundle $\mathcal{L}$ on $A$ we denote by $h_\mathcal{L}$ the height function associated to $\mathcal{L}$ (cf. \cite[Chapter 5, Theorem 3.2]{MR715605}). We denote by $h_A:A(k)\times A^\vee(k)\rightarrow \mathbb{R}$ the Neron--Tate height on $A\times A^\vee$. It is the height function associated to the Poincar\'{e} line bundle $\mathcal{P}_A$ on $A\times A^\vee$.    Given a polarisation $\lambda:A\rightarrow A^\vee$, we denote by $h_\lambda$ the pairing $A(k)\times A(k)\rightarrow \mathbb{R}$ defined by 
\[h_\lambda(a,b)=h_A(a,\lambda(b)).\] 
The following lemma is standard, but we include it for lack of a suitable reference. 

\begin{lemma} \label{lem:height_pairing_adjoints}
Let $\phi:A\rightarrow B$ be a homomorphism of abelian varieties over $k$. Then, for any $a\in A(k)$ and $b\in B^\vee(k)$, we have 
\[h_B(\phi(a),b)=h_A(a,\phi^\vee(b)).\]
\end{lemma}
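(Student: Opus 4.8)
\textbf{Proof plan for Lemma \ref{lem:height_pairing_adjoints}.}

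The plan is to deduce the adjunction formula from the defining property of the Poincar\'e bundle, together with functoriality of heights. First I would recall that, by definition, $h_A$ is the height associated to the Poincar\'e line bundle $\mathcal{P}_A$ on $A\times A^\vee$, and similarly $h_B$ is associated to $\mathcal{P}_B$ on $B\times B^\vee$. The key geometric input is the compatibility of Poincar\'e bundles under a homomorphism $\phi\colon A\to B$: one has a canonical isomorphism
\[
(\phi\times \mathrm{id}_{B^\vee})^*\mathcal{P}_B \;\cong\; (\mathrm{id}_A\times \phi^\vee)^*\mathcal{P}_A
\]
of line bundles on $A\times B^\vee$. This is essentially the statement that $\phi^\vee$ is \emph{defined} by pulling back the Poincar\'e bundle (see e.g. Mumford's \emph{Abelian Varieties} or \cite[\S6]{MR861976}), so I would cite it rather than reprove it.

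Granting this isomorphism, the argument is a short chain of standard height-machine manipulations. By functoriality of the height with respect to morphisms of varieties (heights pull back along morphisms, up to bounded functions, and the canonical/N\'eron--Tate normalisation makes the equality exact on the relevant abelian varieties), for $a\in A(k)$ and $b\in B^\vee(k)$ one has
\[
h_{\mathcal{P}_B}\big((\phi\times \mathrm{id})(a,b)\big) = h_{(\phi\times \mathrm{id})^*\mathcal{P}_B}(a,b)
= h_{(\mathrm{id}\times \phi^\vee)^*\mathcal{P}_A}(a,b)
= h_{\mathcal{P}_A}\big((\mathrm{id}\times \phi^\vee)(a,b)\big).
\]
The left-hand side is $h_B(\phi(a),b)$ and the right-hand side is $h_A(a,\phi^\vee(b))$, which is exactly the claim. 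To be careful about normalisations I would phrase this using the canonical height attached to $\mathcal{P}_A$ (the unique one that is bilinear in the two arguments, cf. \cite[Chapter 5, Theorem 3.2]{MR715605}), noting that pullback of a line bundle along a homomorphism of abelian varieties sends canonical heights to canonical heights, so the a priori bounded error term vanishes.

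The main obstacle is purely bookkeeping rather than conceptual: one must make sure the normalisation of $h_A$ used in the statement (the N\'eron--Tate height attached to $\mathcal{P}_A$) is the one for which pullback along $\phi\times\mathrm{id}$ and $\mathrm{id}\times\phi^\vee$ is \emph{exactly} compatible, with no bounded discrepancy. This is handled by invoking the uniqueness of the canonical height in a given linear (here bilinear) equivalence class, after which the isomorphism of Poincar\'e bundles transports it correctly. No genuinely hard step is involved; the lemma is included only because a clean reference in this exact form is awkward to locate.
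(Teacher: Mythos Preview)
Your proposal is correct and follows essentially the same route as the paper: both reduce the identity to the isomorphism $(\phi\times 1)^*\mathcal{P}_B\cong(1\times\phi^\vee)^*\mathcal{P}_A$ on $A\times B^\vee$ (the defining property of $\phi^\vee$) together with functoriality of canonical heights under pullback. The paper cites \cite[Chapter 5, Proposition 3.3]{MR715605} for the latter, which handles the normalisation issue you flag.
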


\begin{proof}
 By \cite[Chapter 5, Proposition 3.3]{MR715605}, the composition 
 \[A\times B^\vee \stackrel{\phi\times 1}{\longrightarrow} B\times B^\vee \stackrel{h_B}{\longrightarrow} \mathbb{R} \]
 is equal to the height function $h_{(\phi\times 1)^*\mathcal{P}_B}$. Similarly, 
$h_A\circ (1\times \phi^\vee)=h_{(1\times \phi^\vee)^*\mathcal{P}_A}.$
It thus suffices to show that the line bundles $(\phi\times 1)^*\mathcal{P}_B$ and $(1\times \phi^\vee)^*\mathcal{P}_A$ agree in $\textup{Pic}(A\times B^\vee)$, which is an immediate consequence of the defining property of $\phi^\vee$ (see e.g. \cite[\S11]{MR861974}).
\end{proof}

With Lemma \ref{lem:height_pairing_adjoints} in hand we can establish the basic compatibility with heights satisfied by the map $\pi^*$ of Theorem \ref{Galois_descent_rational_points}(1).

\begin{proposition} \label{prop:heights_rescaling}
Let $X$ be a curve over a number field $K$, and $G$ a finite subgroup of $K$-automorphisms of $X$. Let $\pi:X\rightarrow X/G$ denote the quotient morphism. Then, for all $a,b$ in $\textup{Jac}_{X/G}(K)$, we have  
\[h_{\textup{Jac}_X}\left(\pi^*(a),\pi^*(b)\right)=|G| \cdot h_{\textup{Jac}_{X/G}}(a,b).\]
Here the height pairings are defined with respect to the canonical principal polarisations of Lemma \ref{is_an_ab_var}. 
\end{proposition}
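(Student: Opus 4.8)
The plan is to reduce the identity to the adjointness property of the N\'eron--Tate height established in Lemma \ref{lem:height_pairing_adjoints}, combined with the interaction between the pullback $\pi^*$, its dual $(\pi^*)^\vee=\pi_*$, and the canonical polarisations. First I would recall from Definition \ref{quasi_nice_f_star} that, by our conventions, $\pi_*:\Jac_X\to\Jac_{X/G}$ is \emph{defined} to be $\lambda_{X/G}^{-1}\circ(\pi^*)^\vee\circ\lambda_X$, so that with respect to the canonical principal polarisations the maps $\pi^*$ and $\pi_*$ are mutually dual in the sense that $(\pi^*)^\vee=\lambda_X\circ\pi_*\circ\lambda_{X/G}^{-1}$. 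I would also recall from Proposition \ref{pull_back_pushforward_prop_quasi_nice} that $\pi_*\circ\pi^*=|G|$ on $\Jac_{X/G}$.

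The core computation then runs as follows. Fix $a,b\in\Jac_{X/G}(K)$. Using the definition of $h_\lambda$ in terms of the Poincar\'e height $h_{\Jac_X}(\,\cdot\,,\,\cdot\,)$ on $\Jac_X\times\Jac_X^\vee$ and the polarisation $\lambda_X$, we have
\[
h_{\Jac_X}\!\left(\pi^*(a),\pi^*(b)\right)=h_{\Jac_X}\!\big(\pi^*(a),\,\lambda_X(\pi^*(b))\big),
\]
where on the right $h_{\Jac_X}$ denotes the N\'eron--Tate pairing $\Jac_X(K)\times\Jac_X^\vee(K)\to\mathbb R$. Applying Lemma \ref{lem:height_pairing_adjoints} to the homomorphism $\pi^*:\Jac_{X/G}\to\Jac_X$ with the point $\pi^*(a)\in\Jac_{X/G}(K)$ in the source and $\lambda_X(\pi^*(b))\in\Jac_X^\vee(K)$ in the target, this equals
\[
h_{\Jac_{X/G}}\!\big(a,\,(\pi^*)^\vee(\lambda_X(\pi^*(b)))\big).
\]
Now substitute $(\pi^*)^\vee\circ\lambda_X=\lambda_{X/G}\circ\pi_*$ (the defining relation for $\pi_*$) to rewrite the second argument as $\lambda_{X/G}\big(\pi_*\pi^*(b)\big)=\lambda_{X/G}(|G|\,b)=|G|\,\lambda_{X/G}(b)$, using Proposition \ref{pull_back_pushforward_prop_quasi_nice} and linearity of $\lambda_{X/G}$. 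Hence the expression becomes $h_{\Jac_{X/G}}\!\big(a,\,|G|\,\lambda_{X/G}(b)\big)=|G|\cdot h_{\Jac_{X/G}}(a,b)$, using bilinearity of the N\'eron--Tate pairing, which is the claim.

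The only genuine point requiring care is bookkeeping: keeping straight whether $h_{\Jac_X}$ refers to the two-variable Poincar\'e pairing on $\Jac_X\times\Jac_X^\vee$ or to the polarised pairing $h_{\lambda_X}$ on $\Jac_X\times\Jac_X$, and correctly threading the polarisation isomorphisms through Lemma \ref{lem:height_pairing_adjoints}. There is no serious analytic or geometric obstacle here; everything follows formally once the definition of $\pi_*$ as the dual of $\pi^*$, the adjointness in Lemma \ref{lem:height_pairing_adjoints}, and the relation $\pi_*\pi^*=|G|$ are combined. One should also note that $\pi^*(a),\pi^*(b)$ indeed lie in $\Jac_X(K)$ since $\pi$ is defined over $K$, so all the points involved are rational and the height-theoretic statements apply verbatim.
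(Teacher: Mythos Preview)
Your argument is correct and follows exactly the approach of the paper: both proofs combine Lemma \ref{lem:height_pairing_adjoints} (adjointness of the N\'eron--Tate pairing) with the defining relation $(\pi^*)^\vee=\lambda_{X/G}\circ\pi_*\circ\lambda_X^{-1}$ from Definition \ref{quasi_nice_f_star} and the identity $\pi_*\circ\pi^*=|G|$ from Proposition \ref{pull_back_pushforward_prop_quasi_nice}. The paper's proof is stated in a single sentence, whereas you have carefully unpacked the polarisation bookkeeping, but the content is the same.
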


\begin{proof}
This follows from Lemma \ref{lem:height_pairing_adjoints}, recalling that $\pi^*$ is dual to $\pi_{*}$ and that $\pi_{*}\circ \pi^*=|G|$ (the first of these facts holds by definition, see Definition \ref{quasi_nice_f_star}, while the second is Proposition \ref{pull_back_pushforward_prop_quasi_nice}).
\end{proof}

 \subsection{Tate modules and differentials} \label{sec:TMG}

With a view to applications, we establish some basic auxiliary results concerning the action of a finite group $G$ of automorphisms on regular differentials and on Tate modules. In the following subsection, $K$ denotes either a number field or the completion of one.    Let $A/K$ be an abelian variety.  

\begin{lemma}  \label{lemma: Properties of Omega1}  
Let $G$ be a finite group acting on $A$ via $K$-automorphisms. Then 
\begin{enumerate}
\item for any prime $\ell$, the $G$-representations  
\[V_{\ell}(A)\quad \textup{ and }\quad  \Omega^{1}(A) \oplus \Omega^{1}(A)^*\]
are isomorphic  after extending scalars to $\mathbb{C}$.
\item if   $\Omega^{1}(A)$ is self-dual  as a $G$-representation, then $\Omega^{1}(A)^{\oplus 2}$ is realisable over $\mathbb{Q}$. 
\item if $K=\mathbb{R}$, or $K$ is a number field with a real place, then $\Omega^{1}(A)$ is realisable over $\mathbb{Q}$.
\end{enumerate}
\end{lemma}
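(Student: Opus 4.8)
The three parts build on one another, so I would organize the proof around part (1) and then derive (2) and (3) from it. For part (1), the key input is the canonical $G$-equivariant isomorphism $\alpha_A\colon \Omega^1(A)\xrightarrow{\sim}\Omega^1(\Jac_X)$-type comparison — but here $A$ is already an abelian variety, so what I actually want is the classical Hodge-theoretic statement that, after extending scalars to $\mathbb C$, the $\ell$-adic Tate module $V_\ell(A)$ is isomorphic as a $G$-representation to $H^1_{\mathrm{dR}}(A)\otimes\mathbb C\cong \Omega^1(A)\oplus\overline{\Omega^1(A)}$, and that complex conjugation on the Betti/de Rham side matches the dual on the space of differentials (for an abelian variety, $H^0(A,\Omega^1)^*\cong H^1(A,\mathcal O_A)$ and the Hodge decomposition reads $H^1\otimes\mathbb C = H^{1,0}\oplus H^{0,1}$). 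Concretely: fix an embedding $\overline{\mathbb Q_\ell}\hookrightarrow\mathbb C$ (as in \S\ref{General_notation}); then $V_\ell(A)\otimes_{\mathbb Q_\ell}\mathbb C\cong H^1_{\text{ét}}(A_{\overline K},\mathbb Q_\ell)^*\otimes\mathbb C\cong H^1_{\mathrm B}(A(\mathbb C),\mathbb C)^*$ if $K\subseteq\mathbb C$ (and in general after a further base change), and this carries a Hodge structure whose two graded pieces are $\Omega^1(A)$ and its complex-conjugate, which is $G$-equivariantly isomorphic to $\Omega^1(A)^*$ since the polarisation identifies $A$ with $A^\vee$ up to the relevant twist. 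I would cite \cite[Chapter 25]{milne1998lectures} or \cite{MR861976} for the comparison and the identification of the Hodge pieces. The main obstacle here is bookkeeping the field of definition: one must either assume $K$ embeds in $\mathbb C$ or pass to $A_{\overline{\mathbb Q}}$, noting the $G$-representation $\Omega^1(A)$ and the isomorphism class of $V_\ell(A)$ are unchanged under algebraically closed base extension in characteristic $0$.

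For part (2), suppose $\Omega^1(A)$ is self-dual as a $G$-representation. By part (1) we then have $V_\ell(A)\otimes\mathbb C\cong \Omega^1(A)\oplus\Omega^1(A)^*\cong\Omega^1(A)^{\oplus 2}$ as $\mathbb C[G]$-representations. But $V_\ell(A)$ has rational character by \cite[Theorem 1, Chapter 25]{milne1998lectures} (this fact is already invoked in the proof of Proposition \ref{Proposition: pseudo Brauer for Xp(A)}), and a representation with rational character over a field of characteristic $0$ that is moreover \emph{self-dual} with all Schur indices $1$ is realisable over $\mathbb Q$; more to the point, $V_\ell(A)$ is literally a $\mathbb Q_\ell[G]$-representation with rational character. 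I would argue: since $\Omega^1(A)^{\oplus 2}$ has the same character as the $\mathbb Q_\ell[G]$-module $V_\ell(A)$, and since that character is rational-valued, the character of $\Omega^1(A)^{\oplus 2}$ is a non-negative-integer combination of characters of $\mathbb Q$-irreducible representations (the point being that the multiplicity of each $\mathbb Q$-irreducible, computed via Schur indices, comes out integral precisely because we have doubled — the Schur index of any rational character of a finite group divides $2$ when the character is real, hence here everything is realisable); then invoke that a $\mathbb Q_\ell[G]$-module whose character is that of a $\mathbb Q[G]$-module is itself extended from $\mathbb Q$, so $\Omega^1(A)^{\oplus 2}$ is defined over $\mathbb Q$. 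The cleanest packaging is: $V_\ell(A)$ descends to a $\mathbb Q[G]$-module $W$ (since its character is rational and, as $V_\ell(A)\cong\Omega^1(A)^{\oplus 2}$ is self-dual, the relevant Schur indices are $1$), and $\Omega^1(A)^{\oplus 2}\otimes\mathbb C\cong W\otimes\mathbb C$ forces $\Omega^1(A)^{\oplus 2}\cong W$ over any field containing enough roots of unity, hence over $\mathbb Q$ after checking characters agree. The subtle point — and the one I would be most careful about — is justifying that the doubling kills the Schur index; this is where self-duality of $\Omega^1(A)$ is essential, since the Schur index of a real irreducible character is at most $2$, so twice it is realisable.

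For part (3), assume $K=\mathbb R$ or $K$ is a number field with a real place $v$; base-change along $K\hookrightarrow K_v\cong\mathbb R$ (which changes neither the $G$-representation $\Omega^1(A)$ up to isomorphism nor the hypothesis, since differentials and their $G$-action are compatible with field extension), so we may assume $K=\mathbb R$. Then $A(\mathbb C)$ carries the complex conjugation $c$ coming from the real structure, which acts antiholomorphically and $G$-equivariantly (as $G$ acts by $\mathbb R$-automorphisms), hence induces a $\mathbb C$-antilinear $G$-equivariant involution on $H^1_{\mathrm B}(A(\mathbb C),\mathbb C)$ swapping $H^{1,0}=\Omega^1(A)$ with $H^{0,1}$. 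Concretely, $c$ gives a $G$-equivariant $\mathbb C$-antilinear isomorphism $\Omega^1(A)\xrightarrow{\sim}\overline{\Omega^1(A)}\cong\Omega^1(A)^*$, which upon composing with the canonical map furnishes a $G$-invariant real (or rational) structure: the fixed points of this antilinear involution on $H^1_{\mathrm B}(A(\mathbb C),\mathbb R)$ form a $\mathbb Q$-form. More simply, the existence of a real place means $H^1_{\text{ét}}(A_{\overline K},\mathbb Q_\ell)$, equivalently $\Omega^1(A)\oplus\Omega^1(A)^*$, already carries a $\mathbb Q$-structure \emph{and} the conjugation identifies the two summands, so $\Omega^1(A)$ itself is $G$-equivariantly a half of a rational representation in a way that descends; I would phrase this as: the antilinear involution realises $\Omega^1(A)$ as (the complexification of) $H^0(A_{\mathbb R},\Omega^1_{A_{\mathbb R}/\mathbb R})$, a representation over $\mathbb R$, and combined with rationality of the character of $\Omega^1(A)$ (which follows from part (1) plus rationality of the character of $V_\ell$, halved — again legitimate because the character of $\Omega^1(A)$ equals that of $\Omega^1(A)^*$, so it is already $\tfrac12\chi_{V_\ell}$ and real-valued; one checks it is in fact $\mathbb Q$-valued since the Hodge pieces of a variety over $\mathbb R$ have real, and here rational, character) we conclude realisability over $\mathbb Q$. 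The anticipated difficulty in part (3) is being precise that the antiholomorphic involution is both $G$-equivariant and defined in a way compatible with the rational (not merely real) structure on cohomology; I would handle this by working with $H^1_{\text{ét}}(A_{\overline{\mathbb Q}},\mathbb Q)$ (Betti cohomology viewed rationally) and its mixed Hodge / real Frobenius structure rather than with $\mathbb R$-coefficients directly.
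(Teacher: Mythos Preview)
Your approach to part (1) is essentially the paper's: both rest on the comparison $V_\ell(A)\otimes\mathbb C\cong H_1(A(\mathbb C),\mathbb Q)\otimes\mathbb C$ together with the Hodge decomposition, whether phrased via the period map on homology (the paper) or via de Rham/Betti cohomology (you).

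For parts (2) and (3), however, you miss the single observation that makes both immediate, and in (3) this leaves a genuine gap. The point is that $H_1(A(\mathbb C),\mathbb Q)$ is not merely a representation with rational character: it is an honest $\mathbb Q[G]$-module. Thus $\Omega^1(A)\oplus\Omega^1(A)^*\cong H_1(A(\mathbb C),\mathbb Q)\otimes\mathbb C$ is \emph{a priori} realisable over $\mathbb Q$, and (2) follows in one line once $\Omega^1(A)\cong\Omega^1(A)^*$. Your detour through Schur indices is salvageable (Brauer--Speiser does give $m_{\mathbb Q}(\chi)\le 2$ for rational $\chi$, so doubling suffices), but the claim that ``self-duality implies the relevant Schur indices are $1$'' is false as stated: quaternionic representations are self-dual with Schur index $2$.

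For (3) the paper again exploits an actual $\mathbb Q$-lattice: choosing $\sigma\colon K\hookrightarrow\mathbb C$ to factor through $\mathbb R$, complex conjugation acts on the lattice $H_1(A(\mathbb C),\mathbb Z)$ itself, and its fixed sublattice $H_1^{+}$ satisfies $H_1^{+}\otimes\mathbb R\cong\Omega^1(A/\mathbb R)^*$ as $\mathbb R[G]$-modules. Hence $\Omega^1(A)^*$, and so $\Omega^1(A)$, is realisable over $\mathbb Q$. Your argument instead produces only an $\mathbb R$-form of $\Omega^1(A)$ and then asserts that ``realisable over $\mathbb R$ plus rational character implies realisable over $\mathbb Q$''. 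This implication is \emph{not} valid in general: a rational-valued character can have $\mathbb Q$-Schur index $2$ with the archimedean local invariant equal to $0$ (the nontrivial invariants sitting at two finite places), giving a representation defined over $\mathbb R$ but not over $\mathbb Q$. Your closing remark about working with Betti cohomology with rational coefficients and the real Frobenius is exactly the right fix---that is precisely the paper's argument---but the body of your proposal does not carry it out.
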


\begin{proof}
In what follows, for a field extension $F/K$, we will write $\Omega^1(A/F)$ for the $F$-vector space of regular differentials on $A_F$.

(1). Base-changing along a field embedding $\sigma : K \hookrightarrow \mathbb{C}$, it suffices to prove the corresponding result for $K$ replaced by $\mathbb{C}$ and $A$   a complex torus. In this setting, the map that sends \hbox{$\gamma \in H_{1}(A(\mathbb{C}), \mathbb{Z})$} to the map $\omega \mapsto \int_{\gamma} \omega$ induces an isomorphism of $\mathbb{R}[G]$-modules 
\begin{equation} \label{Equation: isomorphism H1 and dual of differentials}
H_{1}(A(\mathbb{C}),\mathbb{Z}) \otimes_{\mathbb{Z}} \mathbb{R} \xrightarrow{\sim} \Omega^{1}(A/\mathbb{C})^{*}.
\end{equation}
Here $\Omega^{1}(A/\mathbb{C})^{*}$ is the dual of $\Omega^{1}(A/\mathbb{C})$ as a complex representation, viewed as an $\mathbb{R}[G]$-module by forgetting the $\mathbb{C}$-vector space structure. Note that for any finite dimensional complex $G$-representation $V$, we have ${V} \otimes_{\mathbb{R}} \mathbb{C} \cong V \oplus V^{*}$.  From this we see that, as $\mathbb{C}[G]$-representations, we have
\begin{equation} \label{omega_1_blah}
H_{1}(A(\mathbb{C}),\mathbb{Z}) \otimes_{\mathbb{Z}} \mathbb{C} \cong \Omega^{1}(A/\mathbb{C}) \oplus \Omega^{1}(A/\mathbb{C})^{*}. \end{equation}
The result now follows since, for each prime $\ell$, we have an isomorphism of $\mathbb{Q}_\ell[G]$-representations
\begin{equation*}\label{Tate_and_diffs_eq}
V_{\ell}(A)\cong H_1(A(\mathbb{C}),\mathbb{Z})\otimes_{\mathbb{Z}}\mathbb{Q}_\ell.
\end{equation*}

(2). From \eqref{omega_1_blah} we see that $\Omega^{1}(A) \oplus \Omega^{1}(A)^*$ is realisable over $\mathbb{Q}$, giving the result.

(3). Under the assumption of the statement, we can choose the embedding $\sigma$ appearing in the proof of (1) so that it factors through $\mathbb{R}$. Then, each side of \eqref{Equation: isomorphism H1 and dual of differentials}  carries a natural action of complex conjugation, and the isomorphism of \eqref{Equation: isomorphism H1 and dual of differentials} is equivariant for this action. Denoting by $H_{1}(A(\mathbb{C}),\mathbb{Z})^{+}$ the $G$-submodule of $H_{1}(A(\mathbb{C}),\mathbb{Z})$ invariant under this action, we obtain from (\ref{Equation: isomorphism H1 and dual of differentials})  an isomorphism of $\mathbb{R}[G]$-modules 
\[ H_{1}(A(\mathbb{C}),\mathbb{Z})^{+} \otimes_{\mathbb{Z}} \mathbb{R} \xrightarrow{\sim} \Omega^{1}(A/\mathbb{R})^{*}. \]
Thus $\Omega^{1}(A/\mathbb{R})^{*}$  is realisable over $\mathbb{Q}$, from which the result follows. 
\end{proof}

\subsection{Deficiency and the Shafarevich--Tate group}
 \label{sec:deficiency_secs}
The aim of this subsection is to extend certain notions introduced by Poonen--Stoll \cite{poonen1999cassels} from geometrically connected curves to the curves of Convention \ref{convention}.

 \begin{definition} \label{def:deficiency} 
 Let $\mathcal{K}$ be a local field of characteristic $0$. We define an invariant $\mu_\mathcal{K}(X)\in \mathbb{Q}^{\times}$ for each curve $X$ over $\mathcal{K}$ as follows. 
 
 First, recall from \cite[\S 8]{poonen1999cassels} that a geometrically connected  curve  $X$ over $\mathcal{K}$, of genus $g$,   is called  {\em deficient} if it has no $\mathcal{K}$-rational divisor of degree $g-1$. For such $X$, we define 
 \[\mu_\mathcal{K}(X)= \begin{cases} 
2 & \text{if $X$ is deficient,} \\
1 & \textup{otherwise}.
\end{cases} \]
 Next, suppose that $X$ is connected (but not necessarily geometrically connected). Let $\mathcal{K}(X)$ denote the function field of $X$, let $\mathcal{L}$ denote the algebraic closure of $\mathcal{K}$ in $\mathcal{K}(X)$, and let $Y$ denote the geometrically connected  curve over $\mathcal{L}$ corresponding to $\mathcal{K}(X)$ via Proposition \ref{curves_vs_function_fields} (cf. also Remark \ref{nice_vs_nice_and_geom_irred}). We then define $\mu_{\mathcal{K}}(X)=\mu_\mathcal{L}(Y)$. Finally, writing $X=\bigsqcup_iX_i$ as a disjoint union of connected components, we define 
 \[\mu_\mathcal{K}(X)=\prod_{i}\mu_{\mathcal K}(X_{i}).\]
If the field $\mathcal{K}$ is clear from context, we will often omit it from the notation.
Further, when $ \mathcal{K} = K_v$ is the completion of a number field $K$ at a place $v$, we write $\mu_v(X)$ in place of $\mu_{K_v}(X)$. 
\end{definition}

\begin{remark}
Let $X$ be a connected curve over $\mathcal{K}$, and let $X_0$ be any geometric connected component of $X$. Letting $\mathcal{K}_0/\mathcal{K}$ be the smallest field extension of $\mathcal{K}$ over which $X_0$ is defined, one checks straight from the definition that $\mu_\mathcal{K}(X)=\mu_{ \mathcal{K}_0}(X_0)$. 
\end{remark}

The following is a generalisation of a result of Poonen--Stoll \cite[Theoerem 8, Corollary 12]{poonen1999cassels}, who prove it for geometrically connected curves.  

\begin{proposition} \label{Lemma: deficiency for quasi-nice curves and 2 part of Sha} Let $X$ be a curve defined over a number field $K$, and write $\sha_{0}(\JX)$ for the quotient of $\sha(\JX)$ by its maximal divisible subgroup. Then
\[\#\sha_{0}(\JX)[2^{\infty}] \equiv \prod_{\textup{$v$ place of $K$}} \mu_v(X) \mod \mathbb{Q}^{\times 2}. \] 
\end{proposition}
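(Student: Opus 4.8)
The plan is to reduce the statement to the geometrically connected case established by Poonen--Stoll, using the decompositions of curves and Jacobians already set up in the excerpt. Recall that Poonen--Stoll prove, for a geometrically connected curve $Y$ over a number field $F$, that $\#\sha_0(\Jac_Y/F)[2^\infty]\equiv \prod_{w}\mu_w(Y)\pmod{\Q^{\times 2}}$, where $w$ runs over places of $F$. The first step is to write $X=\bigsqcup_i X_i$ as a disjoint union of connected (not necessarily geometrically connected) $K$-curves, so that $\Jac_X\cong\prod_i\Jac_{X_i}$; since both the order of $\sha_0[2^\infty]$ and the local factor $\mu_v(X)=\prod_i\mu_v(X_i)$ are multiplicative over this decomposition (the former because $\sha$ of a product is the product of the $\sha$'s, hence likewise for the quotient by the maximal divisible subgroup and its $2$-primary part), we reduce to the case $X$ connected.

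Next, with $X$ connected, let $k_X/K$ be the algebraic closure of $K$ in $K(X)$ and let $X'/k_X$ be the associated geometrically connected curve, so that by Lemma \ref{lem:curve_Weil restriction} we have $\Jac_X\cong \textup{Res}_{k_X/K}\Jac_{X'}$. The key arithmetic input here is the compatibility of Shafarevich--Tate groups with Weil restriction: $\sha(\textup{Res}_{k_X/K}\Jac_{X'}/K)\cong \sha(\Jac_{X'}/k_X)$ (this follows from Shapiro's lemma applied to the Galois cohomology computing $H^1$ and the local conditions, using that $\textup{Res}$ commutes with base change to completions and that places of $K$ decompose as places of $k_X$ above them). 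Passing to the quotient by the maximal divisible subgroup and the $2$-primary part, this gives $\#\sha_0(\Jac_X/K)[2^\infty]=\#\sha_0(\Jac_{X'}/k_X)[2^\infty]$. Applying the Poonen--Stoll result to $X'/k_X$, the right side is $\equiv\prod_{w\mid v,\, v}\mu_w(X')\pmod{\Q^{\times 2}}$ where $w$ runs over places of $k_X$.

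Finally, I would match up the local factors: by the Remark following Definition \ref{def:deficiency} (and the definition of $\mu_v$ for connected curves via $\mu_\mathcal{L}(Y)$), for each place $v$ of $K$ one has $\mu_v(X)=\mu_{K_v}(X\times_K K_v)=\prod_{w\mid v}\mu_{w}(X')$, where the product is over places $w$ of $k_X$ above $v$ — because $X\times_K K_v$ decomposes according to the factorization $k_X\otimes_K K_v\cong\prod_{w\mid v}(k_X)_w$, and each factor is (the restriction of scalars of) the geometrically connected curve $X'_{(k_X)_w}$. Multiplying over all $v$ of $K$ therefore reproduces $\prod_w\mu_w(X')$ over all places $w$ of $k_X$, completing the reduction. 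The main obstacle I anticipate is establishing cleanly the Weil-restriction invariance of $\sha_0[2^\infty]$ together with the bookkeeping that the local invariants $\mu$ are genuinely compatible with both the disjoint-union decomposition and the splitting of places in $k_X/K$; once those compatibilities are nailed down, the statement is a formal consequence of the Poonen--Stoll theorem. A minor point to check is that taking the quotient by the maximal divisible subgroup commutes with the relevant isomorphisms, which is automatic since all maps in sight are isomorphisms of abelian groups.
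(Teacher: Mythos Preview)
Your proposal is correct and follows essentially the same route as the paper: reduce to connected components, then for a connected $X$ identify the geometrically connected model $X'$ over the field of constants $k_X$, invoke $\Jac_X\cong\textup{Res}_{k_X/K}\Jac_{X'}$ together with Shapiro's lemma to transport $\sha_0[2^\infty]$, and match the local factors via the decomposition $k_X\otimes_K K_v\cong\prod_{w\mid v}(k_X)_w$ before appealing to Poonen--Stoll. One small wording issue: the connected components of $X\times_K K_v$ are the curves $X'\times_{k_X}(k_X)_w$ viewed as $K_v$-schemes, not their Weil restrictions (restriction of scalars would give something higher-dimensional); but this does not affect the argument, since the definition of $\mu$ for a connected curve already passes to the geometrically connected model over its field of constants.
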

\begin{proof}  
Writing $X=\bigsqcup X_i$ as a disjoint union of connected components, we have $\JX=\prod_i \Jac_{X_i}$. Using this, we reduce to the case that $X$ is connected. Let $K(X)$ denote the function field of $X$, let $L/K$ denote the algebraic closure of $K$ in $K(X)$, and let $Y$ denote the geometrically connected curve over $L$ corresponding to $K(X)$ via Proposition \ref{curves_vs_function_fields}. As in Remark \ref{nice_vs_nice_and_geom_irred}, $X$ is isomorphic to the $K$-scheme obtained from $Y$ by forgetting the $L$-structure. Observe also that, for each place $v$ of $K$, we have $L\otimes_KK_v=\prod_{w\mid v}L_w$, where the product is taken over each place $w$ of $L$ dividing $v$. Using these two observations, we compute
\begin{equation*}
X\times_KK_v=(Y\times_LL)\times_K K_v=\bigsqcup_{w\mid v}Y\times_{L}L_w.
\end{equation*}
Since each $Y\times_{L}L_w$ is a geometrically connected $L_w$-curve, we conclude that 
\begin{equation}\label{prod_over_places_deficiency}
\mu_v(X)=\prod_{w\mid v}\mu_{L_w}(Y).
\end{equation}

On the other hand, Lemma \ref{lem:curve_Weil restriction} gives
$\JX= \textup{Res}_{L/K}\Jac_{Y},$
where $\textup{Res}_{L/K}$ denotes Weil restriction from $L$ to $K$.   Shapiro's lemma  then gives $\sha_0(\JX)[2^\infty]\cong \Sha_0(\Jac_{Y}/L)[2^\infty]$ \hbox{(cf. \cite[proof of Theorem 1]{MR0330174}).} Combined with \eqref{prod_over_places_deficiency}, this reduces the result to the corresponding one for the geometrically connected curve $Y/L$, which is \cite[Theorem 8, Corollary 12]{poonen1999cassels}.
\end{proof}

\end{document}